\numberwithin{equation}{section}
\newcommand{\dd}{\mathrm{d}}
\renewcommand{\bar}[1]{\overline{#1}}
\newcommand{\egaldistr}{\ {\overset{(d)}{=}}\ }
\renewcommand{\tilde}[1]{\widetilde{#1}}
\renewcommand{\rho}{\varrho}
\renewcommand{\epsilon}{\varepsilon}
\theoremstyle{plain}
\newtheorem{theorem}{Theorem}[section]
\newtheorem{lem}{Lemma}[section]
\newtheorem{corollary}{Corollary}[section]
\newtheorem{remarque}{Remark}[section]
\newtheorem{proposition}{Proposition}[section]
\theoremstyle{definition}
\providecommand{\MR}{\relax\ifhmode\unskip\space\fi MR }
\title{Computing the Yaglom limit of Markov chains with a single exit state using their excursion measure}
\author{Elie~Cerf\footnote{Université Sorbonne Paris Nord, LAGA, CNRS, UMR 7539, Laboratoire d'excellence Inflamex, F-93430 Villetaneuse, France.
		cerf@math.univ-paris13.fr} }
\date{\today}
\begin{document}
	
\maketitle
	
\begin{abstract}
	We prove in this article the existence of the Yaglom limit for Markov chains on discrete state spaces in the setting where the absorbing state is accessible from a single non-absorbing state. We use a representation of the trajectories of this process by its excursion away from death, that allows us to link the Yaglom limit with the large deviations behaviour of the inverse of its local time at the exit state, and to compute its minimal quasi-stationary distribution with its excursion measure.
\end{abstract}

\section{Introduction}

We consider a continuous-time Markov chain $(X_t)_{t\geq0}$ evolving on a discrete state space $E\cup\{\delta\}$ where $\delta$ is an absorbing state. Without loss of generality we can fix $E=\mathbb{N}=\{0,1,2,\dots\}$. We assume that the process is irreducible on $\mathbb{N}$ and that it is almost surely eventually absorbed, meaning that if we denote by $T_\delta:=\inf\{t\geq0, X_t=\delta\}$ the time of absorption, then for any initial distribution $\mu$
\begin{equation}\label{eqn:cond-killedas}\tag{$\mathcal{B}$}
	\mathbb{P}_\mu(T_\delta<\infty)=1.
\end{equation}
The asymptotic behaviour of $(X_t)_{t\geq0}$ being trivial, we are interested in its \textit{quasi-limiting} behaviour which is linked to the process conditionally on \textit{surviving} i.e on the law of $X_t$ conditionally on $\{t<T_\delta\}$. More precisely, we want to understand when the following limits are well defined 
\begin{equation}\label{eqn:defYaglomlim}
	\underset{t\to\infty}{\lim}\mathbb{P}_j(X_t=i|t<T_\delta), \qquad \forall i,j\in\mathbb{N},
\end{equation}
where $\mathbb{P}_j$ stands for the distribution of $(X_t)_{t\geq0}$ starting from state $j\in\mathbb{N}$. These objects are called Yaglom limits of the process $(X_t)_{t\geq0}$ and have been proven to be closely related to its quasi-stationary behavior. In fact, when the limit $\eqref{eqn:defYaglomlim}$ exists and does not depend on the starting point $j\in\mathbb{N}$ then it defines a quasi-stationary distribution (QSD) $\nu^*$ of the process \cite{VereJonesYaglom}, meaning that $\nu^*$ is a probability distribution on $\mathbb{N}$ such that 
\begin{equation}\label{eqn:defqsd}
	\mathbb{P}_{\nu^*}(X_t=i|t<T_\delta) = \nu^*(i), \qquad \forall i\in\mathbb{N}.
\end{equation}
By this definition, we remark that if a probability measure $\nu$ is a QSD for the process $(X_t)_{t\geq0}$, then the time of absorption $T_\delta$ is exponentially distributed under $\mathbb{P}_\nu$ \cite[Theorem 2.2. p.~19]{ColletMartinez_book}. So a necessary condition for the existence of a QSD is the existence of an exponential moment for the time of absorption $T_\delta$ \cite{good_1968} namely
\begin{equation}\label{eqn:conditionexpokilling}
	\gamma^*:=\sup\{\gamma>0, \exists i\in E : \mathbb{E}_i[e^{\gamma T_\delta}]<\infty\}>0.
\end{equation}

In the case of a finite state space, Darroch and Seneta \cite{DarrochSenetafiniteqsd} proved that this condition of exponential killing is always verified and that the minimal QSD exists and is the unique QSD of the Markov chain. However, the matter of the existence of QSDs and Yaglom limits is not trivial for an infinite state space. Therefore, several works have been focused on finding assumptions on the process $(X_t)_{t\geq0}$ under which the condition $\eqref{eqn:conditionexpokilling}$ is also sufficient for the existence of QSDs. This is the case for birth-and-death processes, as it has been shown by Good \cite{good_1968} and van Doorn \cite{vanDoornBDP91}. Then, for a general result, Ferrari et al. \cite{Ferrarirenewalapp} showed through the study of renewal processes linked to the absorption time of the Markov process that under the hypothesis $\eqref{eqn:cond-killedas}$, if $T_\delta$ also verifies \[ \underset{i\to\infty}{\lim} \mathbb{P}_i(T_\delta<t)=0,\] for all $t>0$, then the condition $\eqref{eqn:conditionexpokilling}$ is necessary and sufficient. Very recently, Yamato \cite{yamato2023existence} extended this result to downward skip-free Markov chains by looking at the excursions of the process away from any state $i\in\mathbb{N}$. But the renewal approach for the study Yaglom limits have also proved useful in the case continuous state spaces after the founding work of Mandl \cite{Mandl1961} on diffusions, and Yamato \cite{YamatoRenewaldiffusion} even showed how to recover information on non-minimal QSDs of general diffusions through the dynamical approach.

Our work is also inscribed in this framework. More precisely, we make the following assumption on the process $(X_t)_{t\geq0}$ 
\begin{equation}\label{hyp:excursions}\tag{$\mathcal{A}$}
	\text{\textit{The death-state $\delta$ is accessible from state $0$ only.}}
\end{equation}
This assumption can be written using the jump rate matrix $(q_{i,j})_{i,j\in\mathbb{N}\cup\{\delta\}}$ of the chain as: \[q_{i,\delta} = \lambda \mathbf{1}_{\{i=0\}}, \qquad \forall i\in\mathbb{N}, \] with $\lambda>0$ the jump rate from $0$ to $\delta$. Note that the choice of $0$ as unique \textit{exit-state} is arbitrary, the main hypothesis is that there is a unique entrance to $\delta$. Under this hypothesis we show the existence of the unique minimal QSD.
\begin{theorem}\label{thm:mainintro}
	Suppose that $(\mathcal{A})$ is verified and let $\lambda>0$ be the jump rate from $0$ to $\delta$. Then, there exists $\lambda_c>0$ such that for $\lambda<\lambda_c$ the condition $\eqref{eqn:conditionexpokilling}$ is sufficient for the existence and independence from the initial state of the Yaglom limit of $(X_t)_{t\geq0}$. In particular, its minimal quasi-stationary distribution $\nu^*$ exists and is given by \[ \nu^*(i) = \lim_{t\to\infty}\mathbb{P}_0(X_t=i|t<T_\delta), \qquad \forall i\in\mathbb{N}. \] Moreover, it verifies $\nu^*(0) = \frac{\gamma^*}{\lambda}$, and the $(\nu^*(i))_{i\geq1}$ can be computed using the excursion law away from state $0$.
\end{theorem}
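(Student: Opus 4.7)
The plan is to exploit assumption $(\mathcal{A})$ via an excursion-theoretic decomposition at state $0$. Because $\delta$ is accessible only from $0$, each visit to $0$ has a holding time of law $\mathrm{Exp}(q_0+\lambda)$, where $q_0:=\sum_{i\geq 1}q_{0,i}$, terminated either by death (with probability $p=\lambda/(q_0+\lambda)$) or by an excursion into $\mathbb{N}\setminus\{0\}$ returning to $0$. Thus under $\mathbb{P}_0$ we have the representation $T_\delta=\sum_{k=1}^{N+1}e_k+\sum_{k=1}^{N}\zeta_k$, with $N$ geometric of parameter $p$, the $e_k$ i.i.d. $\mathrm{Exp}(q_0+\lambda)$, and the $\zeta_k$ i.i.d. excursion lengths drawn from the excursion law away from $0$.

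Summing the resulting geometric series yields the Laplace transform
\[
\mathbb{E}_0\!\left[e^{\gamma T_\delta}\right]=\frac{\lambda}{\lambda-\gamma-q_0\bigl(\varphi(\gamma)-1\bigr)},\qquad \varphi(\gamma):=\mathbb{E}\bigl[e^{\gamma\zeta_1}\bigr],
\]
so that $\gamma^*$ is the smaller of (a) the unique positive root of the denominator and (b) the abscissa of convergence of $\varphi$. I would define $\lambda_c$ to be precisely the threshold separating these two regimes; for $\lambda<\lambda_c$ we are in case (a), with $\varphi(\gamma^*)<\infty$ together with the structural identity $\lambda+q_0=\gamma^*+q_0\varphi(\gamma^*)$.

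The main step is then exponential tilting: replace the excursion density by $e^{\gamma^*\zeta}/\varphi(\gamma^*)$ and the holding exponentials by $\mathrm{Exp}(q_0+\lambda-\gamma^*)$. Under the tilted dynamics the returns to $0$ form a genuine renewal process with finite mean inter-arrival time. Writing $\mathbb{P}_0(X_t=i,\,t<T_\delta)=e^{-\gamma^* t}R_i(t)$, where $R_i(t)$ admits a renewal equation obtained by decomposing according to the excursion straddling time $t$, the Blackwell--Smith key renewal theorem delivers the pointwise limit
\[
\nu^*(i):=\lim_{t\to\infty}\mathbb{P}_0(X_t=i\mid t<T_\delta)
\]
as the tilted expected occupation time at $i$ during one cycle divided by the tilted expected cycle length, which is exactly a formula in terms of the excursion measure. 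The value $\nu^*(0)=\gamma^*/\lambda$ is then forced as a consistency check: if $\nu^*$ is a QSD, the killing time under $\mathbb{P}_{\nu^*}$ is $\mathrm{Exp}(\gamma^*)$ while the pointwise killing rate equals $\lambda\nu^*(0)$. For an arbitrary starting point $j$, assumption $(\mathcal{A})$ forces $T_0<T_\delta$ almost surely, so the strong Markov property at $T_0$ reduces the long-time behavior to that under $\mathbb{P}_0$, the factor $\mathbb{E}_j[e^{\gamma^*T_0}]$ cancelling upon normalization.

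The main obstacle is twofold. First, the sharp identification of $\lambda_c$ and the verification that, when $\lambda<\lambda_c$, the tilted excursion law has the moments needed for the key renewal theorem, including the direct Riemann integrability of the relevant tail piece: this is a careful analysis of $\varphi$ near its abscissa of convergence. Second, upgrading the pointwise limits $\nu^*(i)$ to a bona fide probability distribution requires controlling the tilted occupation times at $i$ uniformly enough in $i\in\mathbb{N}$ to justify $\sum_i\nu^*(i)=1$ rather than merely $\leq 1$, which is essentially where the hypothesis $\lambda<\lambda_c$ pays off.
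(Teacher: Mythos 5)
Your proposal is correct and takes a genuinely different analytic route from the paper, even though the starting point (excursion decomposition at $0$, driven by assumption~$(\mathcal{A})$) is the same. You package the trajectory as a geometric number of cycles (holding time at $0$ plus excursion), sum the resulting geometric series to get $\mathbb{E}_0[e^{\gamma T_\delta}]=\lambda/(\lambda-\gamma-q_0(\varphi(\gamma)-1))$, exponentially tilt the defective cycle kernel into a proper renewal kernel, and invoke the Blackwell--Smith key renewal theorem. The paper instead reformulates the problem in terms of the inverse local time at $0$, which is a killed compound Poisson subordinator: it proves a sharp large-deviation asymptotic for $\mathbb{P}(Z_{T^-}>t)$ (Croft--Kingman's lemma to lift Borovkov's integro-local theorem to continuous time, then Laplace's method), and separately identifies the limiting law of the undershoot/overshoot pair $(V_\infty,W_\infty)$ conditioned on survival via an Esscher change of measure combined with Bertoin--van Harn--Steutel. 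The Yaglom limit is then read off by integrating the straddling-excursion law against $(V_\infty,W_\infty)$. Your Laplace transform matches the paper's $\lambda/(\lambda-\psi(\theta))$ with $\psi(\theta)=\theta+q_0(\Psi_\eta(\theta)-1)$, so your $\gamma^*$ and $\lambda_c$ coincide with the paper's $\theta^*$ and $\psi(\theta_+)$. What the two approaches buy: yours is more elementary and closer to the Ferrari--Kesten--Mart\'inez--Picco renewal paradigm the paper cites as motivation; the paper's machinery yields more as by-products, namely the precise tail equivalent $\mathbb{P}(T_\delta>t)\sim (\lambda u^*/\theta^*)e^{-\theta^*t}$ with its Gaussian correction at the level of the subordinator, and the explicit overshoot/undershoot/creeping limit laws (Theorem~\ref{thm:asympUnderOver} and Proposition~\ref{prop:sautdeZasympto}), which have independent interest. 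The obstacles you flag are real but manageable: the direct Riemann integrability of the first-cycle piece $\tilde z_i(t)=e^{\gamma^*t}\mathbb{P}_0(X_t=i,\,t<T_\delta\wedge\sigma_1)$ follows from its exponential tail once $\gamma^*<\theta_+$, i.e.\ $\lambda<\lambda_c$; and the normalization $\sum_i\nu^*(i)=1$ comes out of the ratio structure (the key renewal constant $1/m$ cancels, leaving $\int\tilde z_i$ over $\sum_j\int\tilde z_j$, the latter being the tilted expected alive-time per cycle, finite exactly when $\lambda<\lambda_c$), whereas the paper gets it automatically because $\nu^*$ is realized as the distribution of a functional of the non-defective random pair $(V_\infty,W_\infty)$. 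One small imprecision: the limit is the tilted occupation time at $i$ divided by the tilted total alive-time per cycle, not by the tilted cycle length; these differ, though the resulting formula in terms of the excursion measure is the one you intend and agrees with~$\eqref{eqn:qsdpouri}$. Finally, the reduction to $\mathbb{P}_0$ via the strong Markov property at $T_0$ is exactly the paper's Lemma~\ref{lem:yaglomindeinitial}, and it requires $\mathbb{E}_j[e^{\gamma^*T_0}]<\infty$, which you should justify by comparing $\mathbb{P}_j(T_0>t)$ to the excursion-length tail $\eta((t,\infty))$ and using $\gamma^*<\theta_+^{(\eta)}$, as in Lemma~\ref{lem:lemdelem}.
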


Our method is based on the construction of $(X_t)_{t\geq0}$ as a killed version of a process $(\bar{X}_t)_{t\geq0}$ on $\mathbb{N}$ with transition given by $Q|_{\mathbb{N}\times\mathbb{N}}$ with $\lambda=0$. When $\eqref{hyp:excursions}$ is verified we can represent $T_\delta$ as $\inf \{t>0, L_t\geq T^{(\lambda)}\}$ where $T^{(\lambda)}$ is an exponential random variable independent of $\bar{X}$ with parameter $\lambda$ and \[L_t := \int_{0}^{t} \mathbf{1}_{\{\bar{X}_s = 0\}} \mathrm{d}s, \qquad \forall t\geq0 \] is the local time at $0$ of $(\bar{X}_t)_{t\geq0}$. The process $(L_t)_{t\geq0}$ is a continuous and increasing process such that for any time $t\geq0$, $L_t$ represents the time that the chain has spent in state $0$ before $t$. In other words, \[ (X_s)_{0\leq s\leq T_\delta} \egaldistr (\bar{X}_s)_{0\leq s \leq \sigma_{T^{(\lambda)}}}, \] where $\sigma_s:=\inf\{t>0, L_t\geq s\}$ is the right-continuous inverse of $(L_t)_{t\geq0}$.

It is well-known \cite[Theorem 8, p.~114]{BertoinLevyBook} that the inverse local time $(\sigma_s)_{s\geq0}$ is a subordinator. We denote by $\psi$ its Laplace exponent and we set \[ \theta_+ = \sup \{\theta>0, \psi(\theta)<\infty\}, \] and prove in Section \ref{sec:appqsd} that 
\begin{equation*}
	\lambda_c = \psi(\theta_+) = \lim_{\theta\to\theta_+} \psi(\theta) \in (0,\infty].
\end{equation*}
We point out that $\lambda<\lambda_c$ is always verified when $\psi$ does not jump to $\infty$.

The rest of the article is organised as follows. In the next section, we study of large deviations for subordinators. Then, we formally construct the excursions of the process $(X_t)_{t\geq0}$ and prove Theorem \ref{thm:mainintro}. We conclude with an example of explicit computation in a case of a finite state space. Finally we use these estimates to compute the limit distributions of the overshoot and undershoot of a killed subordinator at the passage of a given level.

\section{Large Deviations for a killed subordinator}

The goal of this section is to obtain two estimates on the large deviations of a killed subordinator. We start by introducing subordinators and some of their properties. Then, we recall some estimates for random walks before extending them to subordinators and finally to killed subordinators. 

\subsection{Subordinators}

A subordinator is a Lévy process starting from $0$ and with non-decreasing trajectory. We give in this section a selected set of properties verified by subordinators and refer the reader to \cite{BertoinStFlour} and \cite[Chapter 5]{KyprianouLevy} for a complete introduction of subordinators and Lévy processes in general. 

We start by recalling a fundamental property on the Laplace transform of subordinators. As Lévy processes, subordinators admits a Lévy-Khintchine decomposition for their characteristic function \cite[Lem. 2.14. p.~57]{KyprianouLevy} from which we can derive a characterization of the Laplace exponent of subordinators.

\begin{theorem}\label{prop:laplacesubor}
	Let $(Y_t)_{t\geq0}$ be a subordinator. Its Laplace transform verifies for all $t\geq0$ and $\theta\in\mathbb{R}$
	\begin{equation}\label{eqn:laplacetransformsubor}
		\mathbb{E}[e^{\theta Y_t}] = e^{t \psi(\theta)},
	\end{equation}
	where the function $\psi : \mathbb{R} \to \mathbb{R}\cup\{\infty\}$ is called the Laplace exponent of $(Y_t)_{t\geq0}$. \\
	Moreover, there exist a unique real number $d\geq0$ and a unique measure $\Pi$ on $(0,\infty)$ verifying $\int_{0}^{\infty} (1\wedge x) \Pi(\mathrm{d}x)<\infty$ such that $\psi$ verifies for all $\theta\in\mathbb{R}$
	\begin{equation}\label{eqn:LKforsubor}
		\psi(\theta) = d \theta + \int_{0}^{\infty} (e^{\theta x}-1) \Pi(\mathrm{d}x).
	\end{equation}
\end{theorem}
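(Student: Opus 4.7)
The plan is to split the statement into two pieces: first establish the existence of a Laplace exponent $\psi:\mathbb{R}\to\mathbb{R}\cup\{\infty\}$ obeying \eqref{eqn:laplacetransformsubor}, and then deduce its Lévy–Khintchine representation \eqref{eqn:LKforsubor} from the general one available for Lévy processes, using non-decreasingness of $Y$ to pin down the structural constraints.

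For the first piece, I would fix $\theta\in\mathbb{R}$ and set $f(t):=\mathbb{E}[e^{\theta Y_t}]\in(0,\infty]$. For $\theta\le 0$ the integrand is bounded by $1$, and for $\theta>0$ the value may be $+\infty$, but in all cases $f$ takes values in $(0,\infty]$. The independence and stationarity of the increments of $Y$ give, for any $s,t\ge 0$,
\begin{equation*}
f(t+s)=\mathbb{E}\bigl[e^{\theta Y_s}e^{\theta(Y_{t+s}-Y_s)}\bigr]=f(s)f(t).
\end{equation*}
The function $f$ is measurable and multiplicative, and the right-continuity of trajectories combined with monotone convergence gives right-continuity of $f$ (valued in $(0,\infty]$). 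The standard Cauchy argument then forces $f(t)=e^{t\psi(\theta)}$ for some $\psi(\theta)\in\mathbb{R}\cup\{\infty\}$, which defines $\psi$. Along the way, one gets that the set $\{\theta\in\mathbb{R}:\psi(\theta)<\infty\}$ is an interval containing $(-\infty,0]$, on which $\psi$ is smooth and convex.

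For the second piece, I would apply the Lévy–Khintchine formula to $Y$ seen as a real-valued Lévy process, writing its characteristic exponent as
\begin{equation*}
\log\mathbb{E}[e^{i\xi Y_t}]=t\Bigl(ia\xi-\tfrac{1}{2}\sigma^2\xi^2+\int_{\mathbb{R}}(e^{i\xi x}-1-i\xi x\mathbf{1}_{|x|<1})\,\nu(\dd x)\Bigr)
\end{equation*}
for some $a\in\mathbb{R}$, $\sigma^2\ge 0$ and Lévy measure $\nu$. Non-decreasingness of $Y$ rules out both a Gaussian component and negative jumps: if $\sigma\neq 0$ or $\nu((-\infty,0))>0$ then with positive probability the path of $Y$ would decrease on some interval, a contradiction. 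Moreover, non-decreasing trajectories are of finite variation, which forces $\int_{0}^{\infty}(1\wedge x)\,\nu(\dd x)<\infty$; this allows me to absorb the compensator $\int i\xi x\mathbf{1}_{|x|<1}\nu(\dd x)$ into the linear term and rewrite
\begin{equation*}
\log\mathbb{E}[e^{i\xi Y_t}]=t\Bigl(id\xi+\int_{0}^{\infty}(e^{i\xi x}-1)\,\nu(\dd x)\Bigr),
\end{equation*}
with a well-defined $d\in\mathbb{R}$. Finally, non-decreasingness also forces $d\ge 0$: since $\int_0^\infty(1\wedge x)\nu(\dd x)<\infty$, one can extract $d$ as the limit $\lim_{t\to 0}t^{-1}\mathbb{E}[Y_t\mathbf{1}_{\{Y_t\le 1\}}]$ minus a non-negative contribution coming from the small jumps, and the left-hand side is $\ge 0$ because $Y_t\ge 0$.

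It then remains to identify $\psi$ with the right-hand side of \eqref{eqn:LKforsubor}. The function $\theta\mapsto d\theta+\int_0^\infty(e^{\theta x}-1)\Pi(\dd x)$ is well defined in $\mathbb{R}\cup\{\infty\}$ for all $\theta\in\mathbb{R}$ (the integral is non-negative for $\theta\ge 0$ and absolutely convergent for $\theta\le 0$ thanks to $\int(1\wedge x)\Pi<\infty$). Both $\theta\mapsto\psi(\theta)$ and this explicit expression are analytic on the strip $\{\mathrm{Re}(z)<0\}$ (extending the real variable to a complex one), agree on $i\mathbb{R}$ by Step 2, and extend by continuity from the left to $\theta\in\mathbb{R}$, so they coincide on their common domain of finiteness and hence everywhere in $\mathbb{R}\cup\{\infty\}$. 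Uniqueness of $(d,\Pi)$ comes from the injectivity of the Lévy–Khintchine representation for general Lévy processes. The main obstacle I expect is the justification of $d\ge 0$ and of the relation $\int_0^\infty(1\wedge x)\Pi(\dd x)<\infty$, which are the two places where the subordinator hypothesis is actually used as opposed to the general Lévy case; once those are in hand, the rest is a matter of bookkeeping.
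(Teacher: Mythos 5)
The paper does not actually prove this statement: it records it as a known fact, citing \cite[Lem.~2.14]{KyprianouLevy}, and remarks that the Laplace form follows from the Lévy–Khintchine decomposition of the characteristic function. Your proposal takes exactly that route (derive from the general Lévy–Khintchine formula, use monotonicity to kill the Gaussian part and the negative jumps, use finite variation to get $\int_0^\infty(1\wedge x)\Pi(\dd x)<\infty$ and to move the compensator into the drift), so structurally you are doing what the paper hints at, only in detail.

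There is, however, a genuine gap in the step where you argue $d\geq0$. You claim to extract $d$ as $\lim_{t\to0}t^{-1}\mathbb{E}[Y_t\mathbf{1}_{\{Y_t\leq1\}}]$ \emph{minus} a non-negative small-jump contribution, and then invoke $Y_t\geq0$. But $Y_t\geq0$ only gives that the limit $\lim_{t\to0}t^{-1}\mathbb{E}[Y_t\mathbf{1}_{\{Y_t\leq1\}}]$ is non-negative, and subtracting a non-negative quantity from a non-negative quantity does not yield a non-negative number; as written the argument establishes $d+\int_0^1 x\,\Pi(\dd x)\geq 0$, not $d\geq0$. To close this you need an argument that isolates the drift from the jumps. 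One clean route is through the Laplace exponent itself: once you know $\psi(-q)=-dq-\int_0^\infty(1-e^{-qx})\Pi(\dd x)$ for $q>0$, the integral term is $o(q)$ as $q\to\infty$ (split at $1/q$ and use $\int_0^\infty(1\wedge x)\Pi(\dd x)<\infty$), so $-\psi(-q)/q\to d$; since $Y_1\geq0$ forces $-\psi(-q)=-\log\mathbb{E}[e^{-qY_1}]\geq0$, the limit is non-negative. Another route is pathwise via the Lévy–Itô decomposition $Y_t=dt+J_t$ with $J_t\geq0$ the sum of jumps: on the positive-probability event that no jump of size $\geq\epsilon$ occurs before time $t$, one has $\mathbb{E}[J_t]\leq t\int_0^\epsilon x\,\Pi(\dd x)$, and choosing $\epsilon$ small enough that $\int_0^\epsilon x\,\Pi(\dd x)<-d/2$ would give $\mathbb{P}(Y_t<0)>0$ if $d<0$, a contradiction. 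Either repair is straightforward, and the rest of your bookkeeping (finite variation, analytic continuation from $i\mathbb{R}$, uniqueness of the pair $(d,\Pi)$) is sound.
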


We conclude this section by introducing compound Poisson processes with drift as a simple class of subordinators. A process $(Y_t)_{t\geq0}$ is a compound Poisson process if there exist $(N_t)_{t\geq0}$ a Poisson process with intensity $a$ and an independent family $(\xi_n)_{n\in\mathbb{N}}$ of i.i.d random variables with distribution $\eta$ on $(0,\infty)$ such that 
\begin{equation}\label{eqn:compoundPoisson}
	Y_t = \sum_{i=1}^{N_t} \xi_i \qquad \forall t\geq0.
\end{equation}
It is easy to verify that such a process is a subordinator. Moreover, we can easily compute its Laplace transform using the ones of the Poisson process $(N_t)_{t\geq0}$ and of $\eta$. 

\begin{lem}\label{lem:psipourpoisson}
	The Laplace exponent $\psi$ of $(Y_t)_{t\geq0}$ defined in $\eqref{eqn:compoundPoisson}$ is given by 
	\begin{equation}\label{eqn:laplacecompoundpoisson}
		\psi(\theta) = a \int_{0}^{\infty} (e^{\theta x} - 1) \eta(\mathrm{d}x) \qquad \forall \theta\in\mathbb{R}.
	\end{equation}
	In particular, the Lévy measure of $(Y_t)_{t\geq0}$ is given by $\Pi(\mathrm{d}x) = a \eta(\mathrm{d}x)$ and has finite mass, and the drift is null.
\end{lem}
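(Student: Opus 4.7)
The plan is to compute $\mathbb{E}[e^{\theta Y_t}]$ directly from the definition of $(Y_t)_{t\geq0}$ as a random sum, match the result against the form $e^{t\psi(\theta)}$ provided by Theorem \ref{prop:laplacesubor}, and then invoke the uniqueness of the Lévy-Khintchine decomposition \eqref{eqn:LKforsubor} to identify the drift $d$ and the Lévy measure $\Pi$.

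For the first step, I would condition on the value of $N_t$. Using the independence between $(N_t)_{t\geq0}$ and the family $(\xi_n)_{n\in\mathbb{N}}$, together with the fact that the $\xi_n$'s are i.i.d.\ with law $\eta$, I get
\[
\mathbb{E}[e^{\theta Y_t}] \;=\; \sum_{n=0}^{\infty} \mathbb{P}(N_t = n)\, \mathbb{E}\!\left[e^{\theta \sum_{i=1}^{n}\xi_i}\right] \;=\; \sum_{n=0}^{\infty} \frac{e^{-at}(at)^n}{n!} \left( \int_{0}^{\infty} e^{\theta x}\, \eta(\mathrm{d}x) \right)^{\!n},
\]
with the convention that the inner term equals $1$ for $n=0$. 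Recognising this as the exponential series yields
\[
\mathbb{E}[e^{\theta Y_t}] \;=\; \exp\!\left( at \int_{0}^{\infty} (e^{\theta x} - 1)\, \eta(\mathrm{d}x) \right),
\]
and comparing with \eqref{eqn:laplacetransformsubor} delivers \eqref{eqn:laplacecompoundpoisson}.

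For the second part, I compare the resulting expression with the canonical form \eqref{eqn:LKforsubor}. Since $a\eta$ is a finite measure on $(0,\infty)$, it automatically satisfies $\int_0^\infty (1\wedge x)\, a\eta(\mathrm{d}x) < \infty$, so it is an admissible Lévy measure. Writing $\psi(\theta) = 0 \cdot \theta + \int_0^\infty (e^{\theta x} - 1)\, (a\eta)(\mathrm{d}x)$ and using the uniqueness of the pair $(d, \Pi)$ stated in Theorem \ref{prop:laplacesubor} forces $d = 0$ and $\Pi(\mathrm{d}x) = a\eta(\mathrm{d}x)$. The total mass $\Pi(0,\infty) = a$ is finite since $\eta$ is a probability measure.

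There is no genuine obstacle: the only subtlety is that for $\theta > 0$ large the integral $\int_0^\infty e^{\theta x}\, \eta(\mathrm{d}x)$ may equal $+\infty$, in which case both sides of \eqref{eqn:laplacecompoundpoisson} are $+\infty$ and the identity holds in the extended sense consistent with the convention $\psi : \mathbb{R} \to \mathbb{R} \cup \{\infty\}$ of Theorem \ref{prop:laplacesubor}. The interchange of sum and expectation above is justified by Tonelli's theorem since all terms are non-negative.
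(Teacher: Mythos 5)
Your proof is correct and follows essentially the same route as the paper's: condition on the Poisson random variable, use independence and the i.i.d.\ structure of the jumps to sum the exponential series, and then read off $d=0$ and $\Pi = a\eta$ from the uniqueness of the Lévy--Khintchine pair. You are slightly more explicit about the series manipulation and the identification step, but the idea is the same.
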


\begin{proof}
	For any $\theta>0$, we have \[ \psi(\theta) = \log \mathbb{E}[e^{\theta \sum_{i=1}^{N_1} \xi_i }], \] but $(N_t)_{t\geq0}$ is a Poisson process, so $N_1$ follows a Poisson distribution $\mathcal{P}(a)$. Therefore, writing $\Psi_{\eta}$ the Laplace transform of the $(\xi_n)_{n\in\mathbb{N^*}}$ we get \[ \mathbb{E}[e^{\theta \sum_{i=1}^{N_1} \xi_i}] = e^{a(\int_{0}^{\infty}e^{\theta x} \eta(\mathrm{d}x) -1)}, \] which gives \[ \psi(\theta) = a\int_{0}^{\infty}(e^{\theta x}-1) \eta(\mathrm{d}x). \] \qedhere
\end{proof}

As a direct corollary, we get that a subordinator $(Y_t)_{t\geq0}$ is a compound Poisson process if and only if it has zero drift and its Lévy measure has finite mass. We define a compound Poisson process with drift as a process $(Y_t)_{t\geq0}$ on $(0,\infty)$ which can be written \[ Y_t = dt + \sum_{i=1}^{N_t} \xi_i, \] i.e. a subordinator whose Lévy measure has finite mass.

\subsection{Large deviations for random walks}

Let $(S_n)_{n\in\mathbb{N}}$ be a random walk on $\mathbb{R_+}$ i.e. a discrete time process with i.i.d non-negative increments. We denote by $\psi$ the log-Laplace transform of $S_1$ meaning $$\psi(\theta)=\log\mathbb{E}[e^{\theta S_1}]\in\mathbb{R}\cup\{+\infty\}, \mbox{ for all } \theta\in\mathbb{R}.$$ We set $$\theta_+:=\sup\{\theta>0 : \mathbb{E}[e^{\theta S_1}]<\infty\}\geq0.$$
Then, since $S_1$ is a.s positive , the function $\psi$ is finite on $(-\infty,\theta_+)$. \\
\indent The study of the large deviations of $(S_n)_{n\in\mathbb{N}}$ is strongly linked to the Legendre transform of the function $\psi$ defined by $$\psi^*(a):=\underset{\theta\geq0}{\sup}(a\theta-\psi(\theta)).$$ In fact, the function $\psi^*$ is the \textit{rate function} for the large deviations, in other words:
\begin{equation*}
	\frac{1}{n}\log(\mathbb{P}(S_n>na))\underset{n\rightarrow\infty}{\longrightarrow}-\psi^*(a), \mbox{ for } a>0,
\end{equation*} 
see \cite{DemboZeitouni} or \cite{IscoeNeyNummelin} for example. We recall the following classic properties of $\psi$ and $psi^*$, that can be found in Borovkov's book \cite[Chapter 9]{Borovkov}. 
\begin{proposition}\label{prop:rappelsurlestransf}
	Suppose $\theta_+>0$, then the following properties are verified.
	\begin{enumerate}
		\item The Laplace transform $\psi$ is continuous on $[0,\theta_+]$ (even if $\psi(\theta_+)=+\infty)$. Moreover, it is increasing, convex and analytic on the interval $[0,\theta_+)$.
		\item For any $a\in\mathbb{R_+}$ there exists a unique $\theta_a\in[0,\theta_+]$ such that $\psi^*(a)=a\theta_a-\psi(\theta_a)$.
		\item Writing $a_+:=\underset{\theta\rightarrow\theta_+}{\lim}\psi'(\theta)$, we have
		\begin{equation*}
			\theta_a=\begin{cases}
				0 & \text{ if } a<\mathbb{E}[S_1]\\ \theta_+ & \text{ if } x>a_+\\ {\psi'}^{-1}(a) & \text{ otherwise.} 
			\end{cases}
		\end{equation*}
		It implies that $\theta_\cdot$ is analytic and increasing on $(\mathbb{E}[S_1],a_+)$.
		\item Let $x_+:=\sup\{ x\in\mathbb{R}: \mathbb{P}(S_1\leq x)<1\}$. The function $\psi^*$ is finite, continuous and strictly convex in $[\mathbb{E}[S_1],x_+]$, and it is analytic in $(\mathbb{E}[S_1],x_+)$. Additionally, ${\psi^*}^{'}(a)=\theta_a.$
	\end{enumerate}
\end{proposition}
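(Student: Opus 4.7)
The plan is to deduce items 1--4 from standard convex-analytic arguments on the cumulant generating function of the non-negative random variable $S_1$. For item 1, analyticity of $\psi$ on $[0,\theta_+)$ follows by differentiating $\theta\mapsto\mathbb{E}[e^{\theta S_1}]$ under the expectation: for $\theta\in[0,\theta_+)$ I pick $\theta'\in(\theta,\theta_+)$ and dominate the successive derivatives using $e^{\theta' S_1}\in L^1$ together with polynomial factors in $S_1$. Convexity is Hölder's inequality applied to $\mathbb{E}[e^{(\alpha\theta_1+(1-\alpha)\theta_2)S_1}]$, and monotonicity on $[0,\theta_+]$ is immediate from $S_1\geq0$. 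Continuity at the right endpoint $\theta_+$, including the case $\psi(\theta_+)=+\infty$, is monotone convergence on $e^{\theta S_1}\uparrow e^{\theta_+ S_1}$ as $\theta\uparrow\theta_+$.

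For items 2 and 3, set $g_a(\theta):=a\theta-\psi(\theta)$ on $[0,\theta_+]$, which is concave by item 1 and differentiable on $(0,\theta_+)$ with $g_a'=a-\psi'$. Since $S_1$ is not almost surely constant, the exponential tilting computation gives $\psi''>0$, so $\psi'$ is strictly increasing and analytic on $(0,\theta_+)$ with range $(\mathbb{E}[S_1],a_+)$. A case split depending on where $a$ sits relative to this range then gives the three sub-cases of item 3: if $a<\mathbb{E}[S_1]$ the function $g_a$ is strictly decreasing and the supremum is at $0$; if $a>a_+$ it is strictly increasing on $[0,\theta_+)$ and the supremum is attained at $\theta_+$; in the intermediate regime the unique interior critical point $\theta_a=(\psi')^{-1}(a)$ is the global maximum by concavity. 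Analyticity of $a\mapsto\theta_a$ on $(\mathbb{E}[S_1],a_+)$ is then the analytic inverse function theorem applied to $\psi'$.

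For item 4, finiteness of $\psi^*$ on $[\mathbb{E}[S_1],x_+]$ is read off the explicit formula $\psi^*(a)=a\theta_a-\psi(\theta_a)$, extended at the right endpoint when $x_+<\infty$ by the identity $\psi^*(x_+)=-\log\mathbb{P}(S_1=x_+)$ that one obtains by sending $\theta\to\infty$ in $a\theta-\psi(\theta)$ and applying dominated convergence to $e^{\theta(S_1-x_+)}\to\mathbf{1}_{\{S_1=x_+\}}$. Strict convexity is inherited from the strict monotonicity of $\theta_\cdot$: for $a_1<a_2$ in the interior and $\alpha\in(0,1)$, plugging the distinct optimisers $\theta_{a_1}\neq\theta_{a_2}$ into the supremum defining $\psi^*(\alpha a_1+(1-\alpha)a_2)$ yields a strict inequality. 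Analyticity on $(\mathbb{E}[S_1],x_+)$ and the Fenchel identity $(\psi^*)'(a)=\theta_a$ both follow from differentiating $\psi^*(a)=a\theta_a-\psi(\theta_a)$ and noting that the cross-term $(a-\psi'(\theta_a))(\theta_a)'$ vanishes because $\psi'(\theta_a)=a$.

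The main obstacle I foresee is the careful handling of the various boundary regimes: the case $\psi(\theta_+)=+\infty$ in items 1 and 3, where one must reason by one-sided limits rather than by values at $\theta_+$, and the case $x_+<a_+$ in item 4, where $\theta_a=\theta_+$ need not be an interior point and the explicit formula for $\psi^*(x_+)$ must be invoked rather than the differentiable Legendre relation. Both are resolved by the monotone convergence argument in item 1 and the essential-supremum computation above, but they are the parts where subtleties are easy to overlook; everything else is bookkeeping once the strict convexity and analyticity of $\psi$ on the interior of its domain have been established.
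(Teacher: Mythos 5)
The paper itself does not prove Proposition~\ref{prop:rappelsurlestransf}: it is stated as a collection of classical facts with a pointer to Borovkov's book \cite[Chapter~9]{Borovkov}, so there is no ``paper's own proof'' to compare against. Your convex-analytic reconstruction covers items 1--3 and most of item 4 correctly, and the overall strategy (differentiation under the integral, H\"older for convexity, exponential tilting for $\psi''>0$, case-split on $a$ relative to $(\mathbb{E}[S_1],a_+)$, analytic inverse function theorem, and the envelope computation for $(\psi^*)'=\theta_\cdot$) is exactly the standard route.

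There is, however, a concrete gap in item 4, and the boundary case you flag is the wrong one. You write ``the case $x_+<a_+$, where $\theta_a=\theta_+$ need not be an interior point,'' but if $x_+<a_+$ then for every $a<x_+$ the optimiser $\theta_a=(\psi')^{-1}(a)$ is interior and nothing is delicate. The genuinely problematic regime is $a_+<x_+$, which occurs whenever $\theta_+<\infty$ and $\psi(\theta_+)<\infty$ while $S_1$ has unbounded support (take a compound Poisson subordinator whose jump law has density proportional to $x^{-3}e^{-x}$ on $[1,\infty)$). In that regime item 3 forces $\theta_a=\theta_+$ for all $a\in(a_+,x_+)$, so $a\mapsto\theta_a$ is constant, $\psi^*(a)=a\theta_+-\psi(\theta_+)$ is affine on $[a_+,x_+]$, and your strict-convexity argument via distinct optimisers collapses --- the three quantities $\theta_{a_1}$, $\theta_{a_2}$, $\theta_{\alpha a_1+(1-\alpha)a_2}$ all equal $\theta_+$. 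Likewise analyticity of $\psi^*$ at $a_+$ fails since there is a kink there. So as written, item 4 can only be established on $(\mathbb{E}[S_1],a_+)$, not on $(\mathbb{E}[S_1],x_+)$, unless one adds the hypothesis $x_+\le a_+$ (equivalent, under $\theta_+>0$, to $\psi(\theta_+)=+\infty$ when $\theta_+<\infty$, or to $\theta_+=\infty$). This restriction is harmless for the paper's purposes --- Theorem~\ref{thm_int_loc} is only invoked for $a\in[\mathbb{E}[S_1],a_+)$ --- but your write-up should state it rather than claim the result on all of $(\mathbb{E}[S_1],x_+)$. One further small omission: uniqueness in item 2 needs $S_1$ not a.s.\ constant (you use $\psi''>0$ but never announce the hypothesis); this is automatic for the subordinators appearing later but should be said.
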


Furthermore, Borovkov gives in the same book \cite[Theorem 9.3.2, p.~258]{Borovkov} the following Integro-Local theorem on large deviations for discrete time random walks, using notations from Proposition \ref{prop:rappelsurlestransf}.

\begin{theorem}\label{thm_int_loc}
	Using assumptions and notation from Proposition \ref{prop:rappelsurlestransf}. For any $a\in[\mathbb{E}[S_1], a_+ [$, we have
	\begin{equation}\label{pgd-ma}
		\mathbb{P}(S_n\geq an) \underset{n\rightarrow\infty}{\sim} e^{-n\psi^*(a)} \frac{1}{\theta_a\sqrt{2\pi\psi^{''}(\theta_a)n}}.
	\end{equation}
	Besides, the equivalence $\eqref{pgd-ma}$ is uniform in $[\mathbb{E}[S_1]+ \frac{A(n)}{\sqrt{n}},a_1[$ for any $A(n)\to\infty$ such that $\frac{A(n)}{\sqrt{n}}\to0$, and $a_1\in(\mathbb{E}[S_1],a_+)$.
\end{theorem}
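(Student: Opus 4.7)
The plan is to follow the classical Cramér exponential tilt combined with a local central limit theorem, which is exactly the route taken in Borovkov's Chapter 9.

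First, I would introduce the exponentially tilted probability measure $\mathbb{P}^{(a)}$ defined on $\sigma(S_n)$ by the Radon--Nikodym derivative $e^{\theta_a S_n - n\psi(\theta_a)}$. Under $\mathbb{P}^{(a)}$, $(S_n)_{n\geq 0}$ is still a random walk whose increments have cumulant generating function $\theta \mapsto \psi(\theta+\theta_a)-\psi(\theta_a)$; in particular, since $\psi'(\theta_a)=a$ by Proposition~\ref{prop:rappelsurlestransf}, they have mean $a$ and variance $\psi''(\theta_a)$. Setting $W_n:=S_n-na$, the change of measure gives the clean identity
\begin{equation*}
\mathbb{P}(S_n\geq an) = e^{-n\psi^*(a)}\, \mathbb{E}^{(a)}\bigl[e^{-\theta_a W_n}\mathbf{1}_{\{W_n\geq 0\}}\bigr],
\end{equation*}
so the entire task reduces to computing the non-exponential prefactor on the right.

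Next, I would rewrite the tilted expectation using $e^{-\theta_a w} = \int_w^\infty \theta_a e^{-\theta_a u}\,\mathrm{d}u$ for $w\geq 0$ and Fubini's theorem:
\begin{equation*}
\mathbb{E}^{(a)}\bigl[e^{-\theta_a W_n}\mathbf{1}_{\{W_n\geq 0\}}\bigr] = \theta_a \int_0^\infty e^{-\theta_a u}\, \mathbb{P}^{(a)}(0\leq W_n\leq u)\,\mathrm{d}u.
\end{equation*}
Under $\mathbb{P}^{(a)}$, $W_n$ has mean zero and variance $n\psi''(\theta_a)$, so a local CLT (Stone's theorem in the non-lattice case, Gnedenko's in the lattice case) yields $\mathbb{P}^{(a)}(0\leq W_n\leq u)\sim u/\sqrt{2\pi n \psi''(\theta_a)}$ uniformly for $u$ in bounded intervals. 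Since the weight $e^{-\theta_a u}$ localises the Laplace integral on $u=O(1)$, plugging in this estimate and using $\theta_a\int_0^\infty u\, e^{-\theta_a u}\,\mathrm{d}u = 1/\theta_a$ delivers the announced equivalent $1/\bigl(\theta_a\sqrt{2\pi n\psi''(\theta_a)}\bigr)$.

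For the uniform statement I would use a quantitative version of the local CLT (Berry--Esseen type), whose error term depends on $a$ only through $\theta_a$ and $\psi''(\theta_a)$; both are continuous and bounded on $[\mathbb{E}[S_1],a_1]$ by Proposition~\ref{prop:rappelsurlestransf}. The hard part will be the behaviour at the left endpoint: the restriction $a\geq \mathbb{E}[S_1]+A(n)/\sqrt{n}$ forces $\theta_a=O(A(n)/\sqrt{n})$, so the exponential weight in the Laplace integral becomes flat and the local CLT approximation must remain valid on a window of length of order $\sqrt{n}/A(n)$ rather than on bounded intervals. Tracking this moderate-deviation dependence precisely is the technical heart of the proof and is exactly what is carried out in Borovkov's treatment that we are citing.
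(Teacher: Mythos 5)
The paper does not prove Theorem~\ref{thm_int_loc}: it is quoted verbatim from Borovkov \cite[Theorem 9.3.2]{Borovkov}, so there is no in-paper argument to compare against. Your sketch reproduces the standard Cramér--Borovkov route — tilt by $e^{\theta_a S_n - n\psi(\theta_a)}$, pull out the exponential via $\psi^*(a)=a\theta_a-\psi(\theta_a)$, rewrite the prefactor as $\theta_a\int_0^\infty e^{-\theta_a u}\,\mathbb{P}^{(a)}(0\le W_n\le u)\,\mathrm{d}u$, then feed in a local CLT for the centered tilted walk — which is precisely the cited source's method and is correct in outline, including the observation that uniformity near the left endpoint is a moderate-deviation matter.

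One small slip worth flagging: you write that $a\geq \mathbb{E}[S_1]+A(n)/\sqrt{n}$ forces $\theta_a=O(A(n)/\sqrt{n})$, but the inequality goes the other way. Since $\theta_\cdot$ is increasing with $\theta_{\mathbb{E}[S_1]}=0$ and $\theta_a'=1/\psi''(\theta_a)$ bounded away from $0$ near $\mathbb{E}[S_1]$, the restriction gives a \emph{lower} bound $\theta_a\gtrsim A(n)/\sqrt{n}$, hence an \emph{upper} bound $1/\theta_a\lesssim \sqrt{n}/A(n)=o(\sqrt{n})$ on the localization window. That upper bound is exactly what makes the local CLT approximation usable uniformly, so your subsequent sentence about windows of order $\sqrt{n}/A(n)$ is the right conclusion; it is only the intermediate $O(\cdot)$ that should read $\gtrsim$. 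You should also state a non-lattice (or explicit lattice) regularity hypothesis for the local CLT step, since the random walk setup in the paper does not impose one; in the downstream application this is harmless because the subordinator there has a positive drift and hence an absolutely continuous marginal law.
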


To apply Theorem \ref{thm_int_loc} in continuous-time settings, in particular to subordinators, we use Croft-Kingman's lemma \cite{Kingman63}.

\begin{lem}\label{lem:kingman}
	Let $f:\mathbb{R}_+\to \mathbb{R}$ be a right-continuous function. Suppose that for any $t>0$ we have $\underset{n\in\mathbb{N}\rightarrow\infty}{\lim}f(nt) = 0,$ then $\underset{t\rightarrow\infty}{\lim}f(t)=0.$
\end{lem}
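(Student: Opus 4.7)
The plan is to argue by contradiction via the Baire category theorem. Suppose $f(t)\not\to 0$ as $t\to\infty$: then there exist $\alpha>0$ and a sequence $s_k\to\infty$ with $|f(s_k)|>\alpha$. The strategy is to produce a non-degenerate interval $(a,b)$ on which $|f(nt)|\leq \alpha/2$ holds simultaneously for every sufficiently large integer $n$, and then observe that the dilations $(na,nb)$ eventually overlap and cover a half-line $[T,\infty)$, forcing $|f|\leq\alpha/2$ on $[T,\infty)$ and contradicting the choice of $s_k$.

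On the compact interval $I:=[1,2]$, define for each integer $N\geq 1$
\[
E_N := \bigcap_{n \geq N}\bigl\{t\in I : |f(nt)|\leq \alpha/2\bigr\}.
\]
The pointwise hypothesis $f(nt)\to 0$ gives $I = \bigcup_{N\geq 1} E_N$. Right-continuity of $f$, applied to the maps $t\mapsto f(nt)$, shows that each $E_N$ is closed under right-sided limits: if $t_k\downarrow t$ with $t_k\in E_N$, then $|f(nt)|=\lim_k|f(nt_k)|\leq\alpha/2$ for every $n\geq N$, hence $t\in E_N$.

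Applying the Baire category theorem in $I$ to the cover $I=\bigcup_N \overline{E_N}$ by closed sets, some $\overline{E_{N_0}}$ contains a nonempty open interval $(a,b)\subseteq I$; in particular $E_{N_0}$ is dense in $(a,b)$. Combining density with the right-limit closure established above, each $t\in(a,b)$ admits points $t_k \in E_{N_0} \cap (t, t+1/k)$ for $k$ large enough that $t+1/k<b$, and $t_k\downarrow t$ then forces $t\in E_{N_0}$. Thus $(a,b)\subseteq E_{N_0}$.

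Finally, pick an integer $n_0\geq N_0$ with $n_0(b-a)> a$. For every $n\geq n_0$ the consecutive dilations $(na,nb)$ and $((n{+}1)a,(n{+}1)b)$ overlap, so $\bigcup_{n\geq n_0}(na,nb)\supseteq(n_0 a,\infty)$. By the definition of $E_{N_0}$ together with the inclusion $(a,b)\subseteq E_{N_0}$, we get $|f|\leq\alpha/2$ on $(n_0 a,\infty)$, contradicting $|f(s_k)|>\alpha$ with $s_k\to\infty$. The main technical point is the passage from \emph{$E_{N_0}$ dense in $(a,b)$} to \emph{$(a,b)\subseteq E_{N_0}$}; this is precisely where the right-continuity hypothesis is used, and it is the only subtle step (the remainder is elementary Baire category plus a geometric covering argument).
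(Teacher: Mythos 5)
The paper cites this result as Croft--Kingman's lemma from \cite{Kingman63} and does not give an in-text proof, so there is no internal argument to compare against; your Baire-category argument is correct and is essentially the classical proof of the lemma. You handle the real subtlety carefully: right-continuity of $f$ does not make the sets $E_N$ closed, so you rightly apply Baire to the closures $\overline{E_N}$ and then use the one-sided limit closure of $E_{N_0}$ together with its density in $(a,b)$ to upgrade to the inclusion $(a,b)\subseteq E_{N_0}$, after which the dilation-overlap covering of a half-line finishes the contradiction.
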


Using Lemma \ref{lem:kingman} with Theorem \ref{thm_int_loc} we obtain the following estimate for subordinator. 

\begin{proposition}\label{prop:continuous-time-integrolocal}
	Let $(Y_t)_{t\geq0}$ be a subordinator. Keeping previous notation for Laplace transforms, assume $\theta_+>0$. Then, for any $a\in[\mathbb{E}[Y_1],a_+)$, we have
	\begin{equation}\label{eqn:continuous-integrolocal}
		 \mathbb{P}(Y_t\geq ta)\underset{t\rightarrow\infty}		{\sim} e^{-t\psi^*(a)}\frac{1}{\theta_a\sqrt{2\pi\psi^{''}(\theta_a)t}}.
	\end{equation}
	Besides, the equivalence $\eqref{eqn:continuous-integrolocal}$ is uniform in $a\in[a_1,a_2]$ with $\mathbb{E}[Y_1]<a_1<a_2<a_+$.
\end{proposition}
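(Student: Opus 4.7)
The plan is to sample the subordinator along the arithmetic progression $(nt)_{n\in\mathbb{N}}$ with a fixed step $t>0$, apply the discrete integro-local estimate Theorem~\ref{thm_int_loc} to this sampled random walk, and then invoke Croft--Kingman's Lemma~\ref{lem:kingman} to remove the dependence on the sampling step $t$.

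For any fixed $t>0$, the process $S_n:=Y_{nt}$ is a random walk whose i.i.d.\ increments have the law of $Y_t$; by \eqref{eqn:laplacetransformsubor} its log-Laplace transform is $t\psi$. A direct computation shows that the Legendre transform of $t\psi$ evaluated at $at$ equals $t\psi^*(a)$, attained at the same optimizer $\theta_a$, and that the second derivative of $t\psi$ at $\theta_a$ is $t\psi''(\theta_a)$. Since for $a\in[\mathbb{E}[Y_1],a_+)$ the level $at$ belongs to $[\mathbb{E}[S_1],ta_+)$, applying Theorem~\ref{thm_int_loc} to $(S_n)_{n\ge 0}$ at this level yields
\begin{equation*}
  \mathbb{P}(Y_{nt}\ge a\,nt)\underset{n\to\infty}{\sim}\frac{e^{-nt\psi^*(a)}}{\theta_a\sqrt{2\pi\psi''(\theta_a)\,nt}}.
\end{equation*}

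Setting $f(s):=\theta_a\sqrt{2\pi\psi''(\theta_a)\,s}\,e^{s\psi^*(a)}\,\mathbb{P}(Y_s\ge as)-1$, the above reads $f(nt)\to 0$ as $n\to\infty$ for every $t>0$. Provided that $f$ is right-continuous, Lemma~\ref{lem:kingman} immediately yields $f(s)\to 0$ as $s\to\infty$, which is exactly \eqref{eqn:continuous-integrolocal}. The right-continuity of $f$ reduces to that of $s\mapsto\mathbb{P}(Y_s\ge as)$: because the paths of $Y$ are right-continuous, $Y_{s'}-as'\to Y_s-as$ almost surely as $s'\downarrow s$, hence $\mathbb{P}(Y_{s'}\ge as')\to\mathbb{P}(Y_s\ge as)$ at every $s$ with $\mathbb{P}(Y_s=as)=0$; the remaining exceptional points can be handled by a monotonicity sandwich exploiting the non-decreasing character of $Y$.

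For the uniformity on a compact subinterval $[a_1,a_2]\subset(\mathbb{E}[Y_1],a_+)$, the discrete input from Theorem~\ref{thm_int_loc} is already uniform in $a$, so I would apply the same scheme to $s\mapsto\sup_{a\in[a_1,a_2]}|f_a(s)|$. The main obstacle I anticipate is precisely this uniform upgrade: verifying the equi-right-continuity needed to feed Lemma~\ref{lem:kingman} uniformly in $a$ requires a joint control of the set $\{(s,a):\mathbb{P}(Y_s=as)>0\}$, which is where a careful sandwich argument in both variables, rather than a direct appeal to the lemma, seems to be the delicate point.
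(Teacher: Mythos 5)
Your proof follows the same route as the paper: sample $Y$ along the skeleton $(nt)_{n\in\mathbb{N}}$, apply Borovkov's integro-local theorem (Theorem~\ref{thm_int_loc}) to the random walk $(Y_{nt})_n$, match the transforms ($\tilde\psi=t\psi$, $\tilde\psi^*(ta)=t\psi^*(a)$, $\tilde\theta_{ta}=\theta_a$), and then lift the skeleton limit to a continuous-time limit via Croft--Kingman. The pointwise part is correct. Where your write-up stops short is precisely where the paper puts in the remaining work: the uniform statement on $[a_1,a_2]$, which is essential since Theorem~\ref{thm_GD} is proved by integrating this estimate over $u$ and invoking Laplace's method, so it cannot be left as a remark. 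The paper handles it by considering $\tilde g(t)=\sup_{a\in[a_1,a_2]}\bigl|\sqrt{t}\,e^{\psi^*(a)t}\mathbb{P}(Y_t>ta)-\tfrac{1}{\theta_a\sqrt{2\pi\psi''(\theta_a)}}\bigr|$, deducing $\tilde g(nt)\to0$ from the uniformity built into Theorem~\ref{thm_int_loc}, and establishing (right-)continuity of $\tilde g$ by bounding $|\tilde g(t)-\tilde g(s)|\le \sup_a |h(t,a)-h(s,a)|\le \omega^h_{[a_1,a_2]}(|t-s|)$, where $h(r,a)=\mathbb{P}(Y_r>ra)\sqrt{r}\,e^{\psi^*(a)r}$ and $\omega^h$ is its modulus of (uniform) continuity on the compact strip; this is exactly the ``joint control'' you anticipated but did not carry out.

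On the other hand, you are more careful than the paper about the hypothesis of Croft--Kingman's lemma. The paper asserts that $g$ is continuous ``clearly,'' whereas, as you note, $s\mapsto\mathbb{P}(Y_s\ge as)$ can genuinely fail to be right-continuous at points where $\mathbb{P}(Y_s=as)>0$ (e.g.\ for a lattice subordinator such as $Y_t=t+N_t$ with $N$ Poisson and $a$ rational). Your proposed ``monotonicity sandwich'' is not spelled out and it is not obvious it repairs this; the cleaner resolution is to restrict to the non-lattice case, under which $Y_s$ has no atoms for $s>0$ so that $g$ and $h$ are indeed continuous---this is also implicitly required for the form of Theorem~\ref{thm_int_loc} being quoted, and it matches the intended application, where $Y$ has drift $1$ and a non-lattice excursion-length distribution.
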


\begin{proof}
	Let us prove $\eqref{eqn:continuous-integrolocal}$ by applying Croft-Kingman's Lemma to the function $$g:t\mapsto\left|\mathbb{P}(Y_t\geq ta)\sqrt{t}e^{\psi^*(a)t}-\frac{1}{\theta_a\sqrt{2\pi\psi^{''}(\theta_a)}}\right|$$ for any fixed $a\in[\mathbb{E}[Y_1],a_+)$. It is clear that $g$ is continuous on $\mathbb{R_+}$. Fix $t>0$, to prove the convergence of $g$ along its discrete-time skeleton $(g_{nt})_{n\in\mathbb{N}}$, let us denote for $n\in\mathbb{N}$, $\tilde{Y}_n:=Y_{nt}$. The process $(Y_t)_{t\geq0}$ being a Lévy process, it follows that the process $(\tilde{Y}_n)_{n\in\mathbb{N}}$ is a discrete-time random walk. Applying Theorem $\ref{thm_int_loc}$, as $ta\in[\mathbb{E}[\tilde{Y}_1],\tilde{a}_+)$ we have
	\begin{equation}\label{cvgence_skelet}
		\mathbb{P}(\tilde{Y}_n>nta)\underset{n\rightarrow\infty}{\sim} e^{-n\tilde{\psi}^*(ta)}\frac{1}{\tilde{\theta}_{ta}\sqrt{2\pi\tilde{\psi}^{''}(\tilde{\theta}_{ta})}},
	\end{equation}
	where $\tilde{\psi}(\theta)=\log\mathbb{E}[e^{\theta\tilde{Y}_1}]$, for any $\theta\in\mathbb{R}$. \\
	Additionally, by Proposition \ref{prop:laplacesubor} we have for $\theta\in\mathbb{R}$: $$\tilde{\psi}(\theta) =\log\mathbb{E}[e^{\theta Y_{nt}}] = \log\mathbb{E}[e^{\theta Y_1}]^{t}= t\psi(\theta),$$
	so we get for $a\in[\mathbb{E}[Y_1],a_+)$,  $$\tilde{\psi}^*(a) = \underset{\theta}{\sup}(\theta a - \tilde{\psi}(\theta)) = \underset{\theta}{\sup} (t\theta\frac{a}{t}-t\psi(\theta)) = t\psi^*(\frac{a}{t}).$$
	Besides, in $\eqref{cvgence_skelet}$, $\tilde{\theta}_{a}$ is such that: $$t\psi^*(a) = \tilde{\psi}^*(ta) = ta\tilde{\theta}_{ta} - \tilde{\psi}(\tilde{\theta}_{ta}) = ta\tilde{\theta}_{ta} - t\psi(\tilde{\theta}_{ta}).$$ Those computations being done for any $t$ and $a$, it follows that for $t>0$ and $a\in[\mathbb{E}[Y_1],a_+)$: $$ \tilde{\theta}_{ta} = \theta_{a};$$ where $\theta_{a}$ is such that, $$\psi^*(a)=a\theta_{a} - \psi(\theta_a).$$ Finally, equation $\eqref{cvgence_skelet}$ becomes, for $t>0$ and $a$ such that $ta\in[\mathbb{E}[\tilde{Y}_1,\tilde{a}_+)$ : $$\mathbb{P}(Y_{nt}>nta) \underset{n\rightarrow\infty}{\sim} e^{-nt\psi^*(a)}\frac{1}{\theta_{a}\sqrt{2\pi\psi^{''}(\theta_{a})}}.$$ But, we know from Proposition \ref{prop:rappelsurlestransf} $(3)$ that $$\tilde{a}_+=\underset{\theta\rightarrow\tilde{\theta}_+}{\lim}\tilde{\psi}'(\theta)= t\underset{\theta\rightarrow\tilde{\theta}_+}{\lim}\psi'(\theta)=ta_+,$$ since $\tilde{\theta}_+=\theta_+$. Moreover, for $t>0$ $$\mathbb{E}[Y_t]=\frac{\mathrm{d}}{\mathrm{d}\theta}(e^{t\psi(\theta)})\bigg\rvert_{\theta=0} =t\psi^{'}(0)=t\mathbb{E}[Y_1],$$ such that $$\mathbb{E}[\tilde{Y}_1]=t\mathbb{E}[Y_1].$$ It proves that for $a\in[\mathbb{E}[Y_1],a_+)$ $$g(nt)\underset{n\rightarrow\infty}{\longrightarrow}0, \mbox{for all } t>0.$$ Therefore, Croft-Kingman's Lemma ensures the convergence: $$g(t)\underset{t\rightarrow\infty}{\longrightarrow}0,$$ which gives $\eqref{eqn:continuous-integrolocal}$. \\ 
	Now let $\mathbb{E}[Y_1]<a_1<a_2<a_+$. To prove that the convergence is uniform for $a\in[a_1,a_2]$ we show that the function $$\tilde{g}:t\mapsto \underset{a\in[a_1,a_2]}{\sup}\left|\mathbb{P}(Y_t>ta)\sqrt{t}e^{\psi^*(a)t}-\frac{1}{\theta_a\sqrt{2\pi\psi^{''}(\theta_a)}}\right|,$$ also converges to $0$ when $t$ tends to $+\infty$. Theorem \ref{thm_int_loc} already gives us that the convergence $\eqref{cvgence_skelet}$ is uniform for $a\in[a_1,a_2]$, so following previous computations we have the convergence of $\tilde{g}$ along its skeleton : $$\tilde{g}(nt)\underset{n\rightarrow\infty}{\longrightarrow}0, \mbox{for all } t>0.$$ But we still need to prove that $\tilde{g}$ is continuous. Let $\epsilon>0$ and $t,s>0$, we have
	\begin{equation*}
		\begin{split}
			|\tilde{g}(t)-\tilde{g}(s)| &\leq \underset{a\in[a_1,a_2]}{\sup} |\mathbb{P}(Y_t>ta)\sqrt{t}e^{\psi^*(a)t} - \mathbb{P}(Y_s>as)\sqrt{s}e^{\psi^*(a)s}|, \\
			& = \underset{a\in[a_1,a_2]}{\sup} |h(t,a)-h(s,a)|,
		\end{split}
	\end{equation*} with $h:(r,a)\mapsto\mathbb{P}(Y_r>ra)\sqrt{r}e^{\psi^*(a)r}$. The function $h$ being continuous on $\mathbb{R}^2_+$, it is uniformly continuous on $[a_1,a_2]$, so we can define its modulus of continuity on this segment $\omega_{[a_1,a_2]}^h : [0,\infty] \to [0,\infty]$. We get
	\begin{equation*}
		\begin{split}
			|\tilde{g}(t)-\tilde{g}(s)| &\leq \underset{a\in[a_1,a_2]}{\sup} \omega_{[a_1,a_2]}^h(d_{\mathbb{R}^2_+}((t,a),(s,a))), \\
			&= \omega_{[a_1,a_2]}^h(|t-s|).
		\end{split}
	\end{equation*}
	But $\omega_{[a_1,a_2]}^h$ is continuous and $\omega_{[a_1,a_2]}^h(0)=0$ so the last inequality gives us the continuity of $\tilde{g}$ on $\mathbb{R}^*_+$. Finally, applying Lemma \ref{lem:kingman} to $\tilde{g}$ completes the proof.
\end{proof}

We now extend these results to the case of killed subordinators.

\subsection{Large deviations for a killed subordinator}\label{sect:LDkilledsub}

In our work, we are interested in subordinators that can be absorbed after a random time, we call them killed subordinators. Let $(Y_t)_{t\geq0}$ be a subordinator and let $T$ be an independent exponentially distributed random variable with parameter $\lambda$. We define the corresponding killed subordinator $(Z_t)_{t\geq0}$ by 

\begin{equation*}
	Z_t = \left\{
	\begin{array}{ll}
		Y_t & \mbox{if } t<T\\
		-\infty & \mbox{else}
	\end{array}
	\right. \qquad \forall s\geq0.
\end{equation*}

We remark from this definition that the Laplace exponent of a killed subordinator also admits a Lévy-Khintchine decomposition of the form $\eqref{eqn:LKforsubor}$: if we denote by $\psi$ the Laplace exponent of $(Y_t)_{t\geq0}$ and by $\psi^{(\lambda)}$ the one of $(Z_t)_{t\geq0}$ we have

\begin{equation}\label{eqn:lienlaplacekilled}
	\psi^{(\lambda)}(\theta) = \psi(\theta) - \lambda \qquad \forall \theta\in\mathbb{R}.
\end{equation}

We get directly that $\theta^{(\lambda)}_+ = \theta_+$ and we deduce the following result as a simple corollary from Proposition $\ref{prop:continuous-time-integrolocal}$. 

\begin{proposition}\label{prop:integrolocalkilled}
	Assume $\theta_+>0$. Then, for $a\in[\mathbb{E}[Y_1],a_+)$, we have the following asymptotic behaviour: 
	\begin{equation}\label{eqn:killed-continuous-integrolocal}
		\mathbb{P}(Z_t\geq ta)\underset{t\rightarrow\infty}	{\sim} e^{-t{\psi^{(\lambda)}}^*(a)}\frac{1}{\theta^{(\lambda)}_a\sqrt{2\pi{\psi^{(\lambda)}}^{''}(\theta^{\lambda}_a)t}}.
	\end{equation}
	Besides, the limit $\eqref{eqn:killed-continuous-integrolocal}$ is uniform for $a\in[a_1,a_2]$ with $\mathbb{E}[Y_1]<a_1<a_2<a_+$.
\end{proposition}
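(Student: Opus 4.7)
The plan is to reduce the statement directly to Proposition \ref{prop:continuous-time-integrolocal} by exploiting the independence of $Y$ and the exponential killing time $T$, then to identify the rate function and its derivatives for $\psi^{(\lambda)}$ in terms of those of $\psi$.

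First I would write, for every $a>0$ and $t\geq 0$,
\begin{equation*}
\mathbb{P}(Z_t\geq ta)=\mathbb{P}(Y_t\geq ta,\, t<T)=\mathbb{P}(Y_t\geq ta)\,\mathbb{P}(T>t)=e^{-\lambda t}\,\mathbb{P}(Y_t\geq ta),
\end{equation*}
where the second equality uses the independence of $T$ and $Y$, and the third uses that $T$ is $\mathrm{Exp}(\lambda)$-distributed.

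Next I would translate the relation \eqref{eqn:lienlaplacekilled} $\psi^{(\lambda)}(\theta)=\psi(\theta)-\lambda$ through the Legendre transform: since the additive constant $-\lambda$ does not affect where the supremum is attained,
\begin{equation*}
{\psi^{(\lambda)}}^{*}(a)=\sup_{\theta\geq 0}\bigl(a\theta-\psi(\theta)+\lambda\bigr)=\psi^{*}(a)+\lambda,\qquad \theta_a^{(\lambda)}=\theta_a,\qquad {\psi^{(\lambda)}}''(\theta)=\psi''(\theta).
\end{equation*}
In particular $a_+$ and the range of parameters $[\mathbb{E}[Y_1],a_+)$ are unchanged by killing.

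Combining the factorisation with Proposition \ref{prop:continuous-time-integrolocal} then yields, for any fixed $a\in[\mathbb{E}[Y_1],a_+)$,
\begin{equation*}
\mathbb{P}(Z_t\geq ta)\underset{t\to\infty}{\sim} e^{-\lambda t}\,e^{-t\psi^{*}(a)}\,\frac{1}{\theta_a\sqrt{2\pi\psi''(\theta_a)t}}=e^{-t{\psi^{(\lambda)}}^{*}(a)}\,\frac{1}{\theta_a^{(\lambda)}\sqrt{2\pi{\psi^{(\lambda)}}''(\theta_a^{(\lambda)})t}},
\end{equation*}
which is exactly \eqref{eqn:killed-continuous-integrolocal}. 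Finally, the uniformity on $[a_1,a_2]\subset(\mathbb{E}[Y_1],a_+)$ is inherited verbatim from the uniformity statement in Proposition \ref{prop:continuous-time-integrolocal}, since the factor $e^{-\lambda t}$ does not depend on $a$. There is really no obstacle here; this proposition is a corollary in the strongest sense, and the only thing worth writing carefully is the identification of the three quantities $({\psi^{(\lambda)}}^{*},\theta^{(\lambda)}_a,{\psi^{(\lambda)}}'')$ in terms of those of $\psi$.
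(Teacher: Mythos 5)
Your proof is correct and follows essentially the same route as the paper: factor $\mathbb{P}(Z_t\geq ta)=e^{-\lambda t}\mathbb{P}(Y_t\geq ta)$ by independence, apply Proposition \ref{prop:continuous-time-integrolocal}, and then absorb the constant $\lambda$ into the Legendre transform via \eqref{eqn:lienlaplacekilled} while noting that $\theta_a$ and $\psi''$ are unaffected. The only small addition you make is the explicit remark that uniformity carries over because $e^{-\lambda t}$ is $a$-independent, which the paper leaves implicit.
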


\begin{proof}
	Let $t>0$ and $a\in[\mathbb{E}[Y_1],a_+)$, we have $$\mathbb{P}(Z_t\geq ta)=\mathbb{P}(Y_t\geq ta,T>t)=\mathbb{P}(Y_t\geq ta)e^{-\lambda t}.$$ So we apply Proposition \ref{prop:continuous-time-integrolocal} and get 
	$$\mathbb{P}(Z_t\geq ta)\underset{t\rightarrow\infty}{\sim}e^{-t(\psi^*(a)+\lambda)}\frac{1}{\theta_a\sqrt{2\pi\psi^{''}(\theta_a)t}}.$$
	But, equation $\eqref{eqn:lienlaplacekilled}$ also implies $${\psi^{(\lambda)}}^*(a)=\underset{\theta>0}{\sup}(\theta a-\psi^{(\lambda)}(\theta)) = \psi^*(a)+\lambda,$$ then, from Proposition $\ref{prop:rappelsurlestransf}$ $(2)$ we also get that $\theta^{(\lambda)}_a=\theta_a$.
\end{proof}

\begin{remarque}
	The derivatives of $\psi$ and $\psi^{(\lambda)}$ being equal we omit the exponent $(\lambda)$ in these derivatives in the rest of the article. The same goes for the derivatives of ${\psi^{(\lambda)}}^*$, $a^{(\lambda)}_+$ and $\theta^{(\lambda)}_a$ for $a\in(0,a_+)$
\end{remarque}

\indent The first step to understand the asymptotic behaviour of $(Z_s)_{s\geq0}$ conditionally on surviving is to compute the limiting probability for $(Z_s)_{s\geq0}$ to die above level $t$, meaning that $Z_{T_-}>t$. One can easily see that $\mathbb{P}(Z_{T_-}>t)$goes to $0$ as $t$ tends to infinity, but since $\{Z_{T_-}>t\}=\{Y_{T_-}>t\}$ we can obtain more information on the speed of convergence. Set

\begin{equation}\label{eqn:deftheta*Z}
	\theta^*:=\sup\{\theta>0 : \mathbb{E}[e^{\theta Z_{T_-}}]<\infty\}
\end{equation}

The following lemma exhibits two different behaviour for $\theta^*$.

\begin{lem}\label{prop:explosionLaplaceZT}
	Assume $\theta_+>0$. If $\psi(\theta_+):=\lim_{\theta\to\theta_+}\psi(\theta)>\lambda$, then $\theta^*$ is the unique solution of 
	\begin{equation}\label{eqn:comportementtheta*}
		\psi(\theta) = \lambda, \qquad \theta\in(0,\theta_+). 
	\end{equation}
	Else, $\theta^*=\theta_+$.
\end{lem}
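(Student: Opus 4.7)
The plan is to reduce $\mathbb{E}[e^{\theta Z_{T_-}}]$ to an explicit one-dimensional integral by conditioning on the killing time $T$, and then to read off $\theta^*$ from the elementary analytic properties of $\psi$.

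First I would observe that $Z_{T_-} = Y_T$ almost surely: by definition $Z_{T_-} = Y_{T_-}$, and since $T$ has a density and is independent of the Lévy process $Y$, a Fubini argument gives $\mathbb{P}(T\text{ is a jump time of }Y)=0$, so $Y_{T_-}=Y_T$ a.s. Conditioning on $T$ and using \eqref{eqn:laplacetransformsubor} (Tonelli applies since the integrand is non-negative), for every $\theta\in\mathbb{R}$,
\begin{equation*}
\mathbb{E}[e^{\theta Z_{T_-}}] \;=\; \int_0^\infty \lambda e^{-\lambda t}\, \mathbb{E}[e^{\theta Y_t}]\,\diff t \;=\; \int_0^\infty \lambda e^{-(\lambda-\psi(\theta))t}\,\diff t.
\end{equation*}
This is finite if and only if $\psi(\theta)<\lambda$ (in which case it equals $\lambda/(\lambda-\psi(\theta))$); in particular for $\theta>\theta_+$ the integrand is identically $+\infty$.

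Next I would analyse the set $A:=\{\theta>0 : \psi(\theta)<\lambda\}$ using Proposition \ref{prop:rappelsurlestransf}(1): $\psi$ is continuous on $[0,\theta_+]$, strictly increasing and convex on $[0,\theta_+)$, with $\psi(0)=0<\lambda$. In the case $\psi(\theta_+)>\lambda$, the intermediate value theorem supplies a point $\theta^\flat\in(0,\theta_+)$ with $\psi(\theta^\flat)=\lambda$; strict monotonicity makes it unique, $A=(0,\theta^\flat)$, and hence $\theta^*=\theta^\flat$ is the unique solution of \eqref{eqn:comportementtheta*}. In the complementary case $\psi(\theta_+)\le\lambda$, strict monotonicity gives $\psi(\theta)<\psi(\theta_+)\le\lambda$ for every $\theta\in(0,\theta_+)$, while $\psi(\theta)=+\infty$ for $\theta>\theta_+$. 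Thus $A=(0,\theta_+)$ and $\theta^*=\theta_+$.

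There is no real obstacle here: the only mildly delicate step is justifying the identification $Z_{T_-}=Y_T$ a.s.\ in order to bring the computation down to the explicit Laplace integral above. After that, everything follows from the monotone, continuous, convex behaviour of $\psi$ already recorded in Proposition \ref{prop:rappelsurlestransf}, and the two cases of the statement correspond exactly to whether the level $\lambda$ is attained strictly inside $(0,\theta_+)$ or only at (or beyond) $\theta_+$.
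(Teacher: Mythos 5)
Your proof is correct and follows essentially the same route as the paper: both condition on $T$ to reduce $\mathbb{E}[e^{\theta Z_{T_-}}]$ to $\lambda\int_0^\infty e^{-(\lambda-\psi(\theta))t}\,\diff t$, conclude finiteness iff $\psi(\theta)<\lambda$, and then read off $\theta^*$ from the monotonicity and continuity of $\psi$. You are somewhat more explicit on the two side points the paper takes as read (the identification $Z_{T_-}=Y_T$ a.s.\ and the intermediate value argument), but the substance is the same.
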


\begin{proof}
	Let $\theta>0$. Integrating over the value of $T$ we have \[ \mathbb{E}[e^{\theta Z_{T_-}}] = \mathbb{E}[e^{\theta Y_T}] = \lambda \int_{0}^{\infty} e^{-t(\lambda - \psi(\theta))} \mathrm{d}t. \] The function $t\mapsto e^{-t(\lambda - \psi(\theta))}$ is integrable if and only if $\psi(\theta)<\lambda$. Therefore, by monotony and continuity of $\psi$, $\theta^*$ is given by the smallest value of $\theta$ such that $\psi(\theta)\geq\lambda$. So there is only two possibilities: either $\psi(\theta_+):=\lim_{\theta\to\theta_+}\psi(\theta)>\lambda$ which is equivalent to $\theta^*$ being the unique solution to $\eqref{eqn:comportementtheta*}$, or $\psi$ jumps to $+\infty$ before reaching $\lambda$ and $\theta^*=\theta_+$.
\end{proof}

Going further we can use Proposition \ref{prop:continuous-time-integrolocal} to compute the exact speed of convergence of the probability to die above level $t$ as $t\to\infty$. More precisely, we have the following theorem. 

\begin{theorem}\label{thm_GD}
	Assume $\theta_+>0$ and $\psi(\theta_+):=\lim_{\theta\to\theta_+}\psi(\theta)>\lambda$. Then, as $t\rightarrow\infty$, the probability for $(Z_s)_{s\geq0}$ to die above level $t$ satisfies
	\begin{equation}\label{equi_ZT}
		\mathbb{P}(Z_{T_-}>t) \underset{t\rightarrow\infty}{\sim} \frac{\lambda u^*}{\theta^*}e^{-t\theta^*},
	\end{equation}
	where $u^*$ is the unique element of $(\frac{1}{a_+},\frac{1}{\mathbb{E}[Y_1]})$ such that $\theta^*=\theta_{\frac{1}{u^*}}$.
\end{theorem}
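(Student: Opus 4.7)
The plan is to write $\mathbb{P}(Z_{T_-}>t) = \mathbb{P}(Y_T>t) = \lambda \int_0^\infty e^{-\lambda s} \mathbb{P}(Y_s > t)\,\mathrm{d}s$ using the independence of $T$ and $Y$, and then to perform a saddle-point analysis on the resulting integral. The natural change of variables is $s = tu$, giving
\[
\mathbb{P}(Z_{T_-} > t) = \lambda t \int_0^\infty e^{-\lambda t u} \mathbb{P}(Y_{tu} \geq tu \cdot (1/u))\,\mathrm{d}u,
\]
and Proposition~\ref{prop:continuous-time-integrolocal} supplies a sharp asymptotic for $\mathbb{P}(Y_{tu}\geq tu \cdot (1/u))$ precisely when $1/u \in [\mathbb{E}[Y_1], a_+)$, i.e.\ $u \in (1/a_+, 1/\mathbb{E}[Y_1]]$. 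So the first step is to restrict the integral to this range, substitute the equivalent, and handle the tails $u \leq 1/a_+$ and $u \geq 1/\mathbb{E}[Y_1]$ separately; these should contribute negligibly thanks to the exponential factor $e^{-\lambda tu}$ for large $u$ and a crude large deviation bound (or Markov's inequality with $\theta$ close to $\theta_+$) for small $u$.

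On the main range one is left with an integral of Laplace type $\lambda\sqrt{t}\int e^{-tf(u)} g(u)\,\mathrm{d}u$ with $f(u) := u\bigl(\lambda + \psi^*(1/u)\bigr)$ and $g(u) := 1/\bigl(\theta_{1/u}\sqrt{2\pi \psi''(\theta_{1/u})\,u}\bigr)$. The crucial computation is that, using ${\psi^*}'(a)=\theta_a$ from Proposition~\ref{prop:rappelsurlestransf}(4),
\[
f'(u) = \lambda + \psi^*(1/u) - \tfrac{1}{u}\,{\psi^*}'(1/u) = \lambda - \psi(\theta_{1/u}),
\]
so the unique critical point inside $(1/a_+, 1/\mathbb{E}[Y_1])$ is the value $u^*$ characterised by $\psi(\theta_{1/u^*}) = \lambda$, which by Lemma~\ref{prop:explosionLaplaceZT} is exactly the $u^*$ such that $\theta_{1/u^*} = \theta^*$. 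Plugging back in yields $f(u^*) = \theta^*$, in agreement with the exponential rate announced in \eqref{equi_ZT}.

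For the prefactor, differentiating once more and using $\theta_a = (\psi')^{-1}(a)$ gives $f''(u^*) = 1/(u^{*3}\psi''(\theta^*))$, so Laplace's method produces
\[
\lambda\sqrt{t}\int e^{-tf(u)}g(u)\,\mathrm{d}u \sim \lambda\sqrt{t}\cdot g(u^*)\cdot e^{-t\theta^*}\sqrt{\frac{2\pi}{t f''(u^*)}} = \frac{\lambda u^*}{\theta^*}\,e^{-t\theta^*},
\]
after the $\psi''(\theta^*)$ and $2\pi$ factors cancel. The main obstacle, I expect, is not the stationary-phase computation itself but the rigorous justification of passing the Proposition~\ref{prop:continuous-time-integrolocal} equivalence under the integral sign: one must exploit the \emph{uniform} asymptotic on any compact sub-interval $[a_1,a_2] \subset (\mathbb{E}[Y_1],a_+)$ to control a shrinking neighbourhood of $u^*$, and combine it with separate crude bounds on the complement, checking that both the $u\downarrow 1/a_+$ endpoint (where the Laplace estimate degenerates) and the $u\to\infty$ tail produce $o(e^{-t\theta^*})$ contributions. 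Everything else is bookkeeping around the identities $f(u^*)=\theta^*$ and $f''(u^*)=1/(u^{*3}\psi''(\theta^*))$.
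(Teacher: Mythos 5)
Your proposal is correct and follows essentially the same route as the paper: the same change of variables reduces $\mathbb{P}(Z_{T_-}>t)$ to a Laplace-type integral, the same identities $\psi^{*'}(1/u)=\theta_{1/u}$, $f'(u^*)=0\Leftrightarrow\psi(\theta_{1/u^*})=\lambda$, $f(u^*)=\theta^*$, and $f''(u^*)=1/(u^{*3}\psi''(\theta^*))$ drive the saddle-point computation, and the paper likewise splits the integral into a compact piece near $u^*$ (where the uniform estimate of Proposition~\ref{prop:continuous-time-integrolocal} plus Laplace's method apply) and a remainder controlled by the Chernoff bound $\mathbb{P}(Y_t>at)\le e^{-t\psi^*(a)}$ together with the strict decrease of $f$ away from $u^*$. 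The only cosmetic difference is an overall sign convention for $f$; the technical difficulty you flag (justifying the interchange via uniformity on a compact sub-interval and crude bounds elsewhere) is exactly how the paper proceeds, using a fixed $\delta$-neighbourhood of $u^*$ rather than a shrinking one.
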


\begin{proof}
	Our goal is to compute an asymptotic equivalent for the integral
	\begin{equation*}
		\begin{split} 
			I(t):= \mathbb{P}(Z_{T_-}>t) = \mathbb{P}(Y_{T_-}>t)&= t\int_{0}^{\infty} \lambda e^{-\lambda ut} \mathbb{P}(Y_{ut}>t)\mathrm{d}u, \\ 
			&= t\int_{0}^{\infty} \lambda e^{-\lambda ut} \mathbb{P}(Y_{ut}>ut\frac{1}{u})\mathrm{d}u. 
		\end{split} 
	\end{equation*}
	We use Laplace's method and show that as $t\to\infty$ the mass of the integral above concentrates in an interval centred at $u^*$ in which we can use the uniform equivalent of Proposition \ref{prop:continuous-time-integrolocal}. \\
	First, let us introduce the function $f:u\mapsto -\lambda u - u\psi^*(\frac{1}{u})$. We prove that $f$ is maximal at $u^*$. Proposition \ref{prop:rappelsurlestransf} gives the following expression for $f$ on $\mathbb{R_+}$: 
	\begin{equation*}
		f(u)=\begin{cases}
			- \theta_+ + u(\psi(\theta_+)-\lambda) & \text{ if } u\in[0,\frac{1}{a_+}]\\ -\theta_{\frac{1}{u}} + u(\psi(\theta_{\frac{1}{u}}) - \lambda) & \text{ if } u\in(\frac{1}{a_+}, \frac{1}{\mathbb{E}[Y_1]})\\ -\lambda u & \text{ if }  u>\frac{1}{\mathbb{E}[Y_1]}.
		\end{cases}
	\end{equation*}
	Since $\psi(\theta_+)>\lambda>0$, we know that $f(u)$ is increasing for $u\in[0,\frac{1}{a_+}]$ and decreasing for $u>\frac{1}{\mathbb{E}[Y_1]}$, so we focus on its behaviour in $(\frac{1}{a_+}, \frac{1}{\mathbb{E}[Y_1]})$. Differentiating, we get for $u\in(\frac{1}{a_+},\frac{1}{\mathbb{E}[Y_1]}]$ $$f'(u)=-(\lambda + \psi^*(\frac{1}{u})-\frac{1}{u}\psi^{*'}(\frac{1}{u})).$$ 
	By Proposition \ref{prop:rappelsurlestransf}, we have  $\psi^{*'}(\frac{1}{u})=\theta_{\frac{1}{u}},$ therefore  
	\begin{equation*}
		f'(u)=-(\lambda+\theta_{\frac{1}{u}}\frac{1}{u}-\psi(\theta_{\frac{1}{u}})-\frac{1}{u}\theta_{\frac{1}{u}})=\psi(\theta_{\frac{1}{u}})-\lambda.
	\end{equation*}
	But, by Lemma \ref{prop:explosionLaplaceZT}, we know that $\theta^*$ is the unique point in $(0,\theta_+)$ such that $\psi(\theta^*) = \lambda$. So we get that $u^*$ is the only point in $(\frac{1}{a_+}, \frac{1}{\mathbb{E}[Y_1]})$ such that $f'(u^*)=0$. The monotony of $f'$ is given by Proposition \ref{prop:rappelsurlestransf} so $f$ is maximal at $u^*$ and we compute \[f(u^*)= - \theta_{\frac{1}{u^*}} + u^* (\psi(\theta_{\frac{1}{u^*}}) - \lambda) = - \theta^*. \]
	Moreover, the function $f$ is in $\mathcal{C}^2((\frac{1}{a_+},\frac{1}{\mathbb{E}[Y_1]}])$ and, for $u\in(\frac{1}{a_+},\frac{1}{\mathbb{E}[Y_1]}]$, $$f^{''}(u)=-\frac{1}{u^2}\theta^{'}_{\frac{1}{u}}\psi^{'}(\theta_{\frac{1}{u}});$$ where $\theta^{'}_{\cdot}$ denotes the derivative of the function $\theta_{\cdot}$. But $\psi^{'}(\theta_{\frac{1}{u}}) = \frac{1}{u}$ and $\theta_{\frac{1}{u}} = \psi^{*'}(\frac{1}{u}-1)$ so we have $$f^{''}(u)=\frac{\psi^{*''}(\frac{1}{u})}{u^3}.$$ 

	Now let $t\geq0$. As $$ \mathbb{E}[Y_1] < \frac{1}{u^*}< a_+,$$ we can fix $\delta>0$ such that : $$ \mathbb{E}[Y_1] < \frac{1}{u^*+\delta } \mbox{ and } \frac{1}{u^*-\delta } < \alpha_+.$$ We decompose
	$$I(t) = I_1(t)+I_2(t);$$
	where $$I_1(t)= t\int_{u^*-\delta}^{u^*+\delta}\lambda e^{-\lambda ut}\mathbb{P}(Y_{ut}>ut\frac{1}{u})\mathrm{d}u,$$ and $$I_2(t)=  t\int_{\mathbb{R}_+\backslash[u^*-\delta,u^*+\delta]}\lambda e^{-\lambda ut}\mathbb{P}(Y_{ut}>ut\frac{1}{u})\mathrm{d}u.$$
	The interval $[u^*-\delta,u^*+\delta]$ being strictly included in $(\frac{1}{a_+},\frac{1}{\mathbb{E}[Y_1]}]$, we can apply Proposition \ref{prop:continuous-time-integrolocal}  which gives 
	\begin{equation*}
		\mathbb{P}(Z_{ut}>tu\frac{1}{u}) \underset{t\rightarrow\infty}{\sim} e^{-tu\psi^*(\frac{1}{u})}\frac{1}{\theta_{\frac{1}{u}}\sqrt{2\pi\psi^{''}(\theta_{\frac{1}{u}})tu}},
	\end{equation*}
	uniformly for $u\in[u^*-\delta,u^*+\delta]$. Thus, taking $t\to\infty$ in $I_1$ we get $$I_1(t) \underset{t\to\infty}{\sim} \sqrt{t}\int_{u^*-\delta}^{u^*+\delta}\lambda e^{tf(u)}\frac{1}{\theta_{\frac{1}{u}}\sqrt{2\pi\psi^{''}(\theta_{\frac{1}{u}})u}}\mathrm{d}u.$$ 
	Given previous computations on $f$ and remarking that $u\mapsto\frac{1}{\theta_{\frac{1}{u}}\sqrt{u\psi^{''}(\theta_{\frac{1}{u}})}}$ is continuous on $(\frac{1}{a_+},\frac{1}{\mathbb{E}[Y_1]}]$, one can estimate the value of the last integral using a classical result of Laplace's integration method (cf Theorem \ref{meth_laplace} in the Appendix). It follows
	\begin{equation*}
		I_1(t)\underset{t\to\infty}{\sim}\lambda e^{-tf(u^*)}\frac{u^*}{\theta_{\frac{1}{u^*}}\sqrt{\psi^{''}(\theta_{\frac{1}{u^*}})|\psi^{*''}(\frac{1}{u^*})|}}.
	\end{equation*}
	As $\psi^{'}(\theta_a)=a$ we have for $a\in(\mathbb{E}[Y_1], a_+)$ $\psi^{''}(\theta_a)=\frac{1}{\theta^{'}_a}=\frac{1}{{\psi^*}^{''}(a)}.$ As a result
	\begin{equation}\label{equiv_i1}
		I_1(t)\underset{t\to\infty}{\sim}\frac{\lambda u^*}{\theta^*}e^{-t\theta^*}.
	\end{equation}
	Let us now show that $I_2$ is dominated by $I_1$ asymptotically. For $a,t>0$, Markov inequality gives for any $\lambda>0$ $$\mathbb{P}(Y_t>a t)\leq e^{-t(a\lambda-\psi(\lambda))}.$$ Therefore, using the definition of $\psi^*$ we have for $a,t>0$ $$\mathbb{P}(Y_t>a t)\leq e^{-t\psi^*(a)}.$$ 
	It gives the following upper bound for $I_2(t)$: $$I_2(t)\leq t\int_{\mathbb{R}_+\backslash[u^*-\delta,u^*+\delta]}\lambda e^{t[-\lambda u-u\psi^*(\frac{1}{u})]}\mathrm{d}u = t\int_{\mathbb{R}_+\backslash[u^*-\delta,u^*+\delta]}\lambda e^{tf(u)}\mathrm{d}u.$$
	We have already shown that $f$ is strictly increasing on $[0,u^*]$ and decreasing on $[u^*,+\infty[$, so there is $\eta>0$ such that $f(u)\leq f(u^*)-\eta$ when $u\notin[u^*-\delta,u^*+\delta]$. Therefore, for $t\geq1$
	$$I_2(t) \leq \lambda e^{(t-1)(f(u^*)-\eta)}\int_{0}^{+\infty} e^{f(u)}\mathrm{d}u\leq\lambda e^{(t-1)(f(u^*)-\eta)}\int_{0}^{+\infty} e^{-\lambda u}\mathrm{d}u.$$ Then by comparison $$ \underset{t\rightarrow\infty}{\lim}\frac{I_2(t)}{e^{tf(u^*)}} =0. $$ Finally, $\eqref{equiv_i1}$ gives $I_2(t)\in o(I_1(t))$, hence $I(t)\underset{t\to\infty}{\sim}I_1(t),$ completing the proof.
\end{proof}

\subsection{Overshoot}

In \cite{BertoinOvershoot99}, Bertoin, van Harn and Steutel computed the limit distributions of the overshoot and undershoot of a (non-killed) subordinator passing a given level. From these, we get the ones of a killed subordinator, conditioned on surviving. \\
For all $t>0$, we write $\tau_t$ the first passage time above level $t$ by the killed subordinator $(Z_s)_{s\geq0}$, and $V_t$ and $W_t$ the associated overshoot and undershoot, \textit{i.e}

\begin{equation*}
	\tau_t = \inf\{s>0, Z_s\geq t\};
\end{equation*}

\begin{equation*}
	V_t = \left\{
	\begin{array}{ll}
		t - Z_{\tau_{t^-}} & \mbox{if } \tau_t<T,\\
		-\infty & \mbox{otherwise;}
	\end{array}
	\right.
\end{equation*}
\begin{equation*}
	W_t = \left\{
	\begin{array}{ll}
		Z_{\tau_t} - t & \mbox{if } \tau_t<T\\
		-\infty & \mbox{otherwise.}
	\end{array}
	\right.
\end{equation*}

We first recall the result of Bertoin et al. \cite{BertoinOvershoot99} for the non-killed case. 

\begin{theorem}\label{thm:overshootBertoin}
	Let $(Y_s)_{s\geq0}$ be a (non-killed) subordinator with drift $d$, Lévy measure $\Pi$ and suppose that $0<\mu:=\mathbb{E}[Y_1] < \infty$. For a given level $t>0$, denote by $\tilde{\tau}$ the first passage time above level $t$ of $(Y_s)_{s\geq0}$, and by $\tilde{V}_t$ and $\tilde{W}_t$ the associated undershoot and overshoot. \\
	Then, as $t\to\infty$, the distribution of $(\tilde{V}_t,\tilde{W}_t)$ converges to the one of $(\tilde{V}_\infty,\tilde{W}_\infty)$ where :
	\begin{equation}\label{eqn:loiVWnonkilled}
		\mathbb{P}(\tilde{V}_\infty> v, \tilde{W}_\infty>w) = \frac{1}{\mu}\int_{v+w}^{\infty}\Pi((t,\infty))\mathrm{d}t.
	\end{equation}
\end{theorem}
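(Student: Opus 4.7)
The plan is to combine the compensation formula for the Poisson point process of jumps of $(Y_s)_{s\geq0}$ with a key renewal theorem for the potential measure of a subordinator.

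First I would express the joint survival function of the overshoot and undershoot in terms of jump data. When $w>0$ the first passage above level $t$ necessarily occurs at a jump time, since the drift contribution only produces a continuous crossing with zero overshoot. Hence
\begin{equation*}
	\mathbb{P}(\tilde{V}_t>v,\tilde{W}_t>w) = \mathbb{E}\Bigg[\sum_{s>0} \mathbf{1}_{\{Y_{s-}<t-v,\,\Delta Y_s>t+w-Y_{s-}\}}\Bigg],
\end{equation*}
where I have used the monotonicity of $(Y_s)_{s\geq0}$ to drop any indicator $\mathbf{1}_{\{\tilde{\tau}=s\}}$: a jump from strictly below $t$ that lands strictly above $t$ is automatically the first such jump. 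Lévy's compensation formula applied to the Poisson point process of jumps then rewrites this as
\begin{equation*}
	\mathbb{P}(\tilde{V}_t>v,\tilde{W}_t>w) = \int_{[0,t-v)} U(\mathrm{d}y)\,\Pi((t+w-y,\infty)),
\end{equation*}
where $U(\mathrm{d}y) = \int_0^\infty \mathbb{P}(Y_s\in\mathrm{d}y)\,\mathrm{d}s$ is the potential measure of the subordinator.

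Second I would apply the key renewal theorem to pass to the limit. Under the assumption $\mu:=\mathbb{E}[Y_1]<\infty$, the potential measure $U$ of a subordinator satisfies Blackwell's theorem in the form that for any directly Riemann integrable $g:\mathbb{R}_+\to\mathbb{R}_+$,
\begin{equation*}
	\int_{[0,T)} g(T-y)\,U(\mathrm{d}y) \xrightarrow[T\to\infty]{} \frac{1}{\mu}\int_0^\infty g(z)\,\mathrm{d}z,
\end{equation*}
see for instance \cite{BertoinLevyBook}. Taking $T=t-v$ and $g(z)=\Pi((v+w+z,\infty))$, I observe that $g$ is non-increasing with $\int_0^\infty g(z)\,\mathrm{d}z = \int_{v+w}^\infty \Pi((s,\infty))\,\mathrm{d}s$, which is finite because $\int_0^\infty x\,\Pi(\mathrm{d}x)<\infty$ follows from $\mu<\infty$. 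After the substitution $z=t-v-y$ and letting $t\to\infty$, I obtain
\begin{equation*}
	\mathbb{P}(\tilde{V}_t>v,\tilde{W}_t>w) \xrightarrow[t\to\infty]{} \frac{1}{\mu}\int_{v+w}^\infty \Pi((s,\infty))\,\mathrm{d}s,
\end{equation*}
which is precisely $\eqref{eqn:loiVWnonkilled}$. The extension to $w=0$ then follows by right-continuity of the joint survival function, with the understanding that when $d>0$ there is an atom at $\tilde{W}_\infty=0$ coming from continuous crossings.

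The main obstacles are twofold. First, one must justify the compensation formula with an integrand involving $Y_{s-}$ and $\Delta Y_s$; this is standard once one notes that $s\mapsto Y_{s-}$ is left-continuous and hence predictable. Second, and more delicately, one must invoke the subordinator version of Blackwell's theorem rather than its classical discrete counterpart: direct Riemann integrability of $\Pi((v+w+\cdot,\infty))$ is automatic by monotonicity, but non-arithmeticity of the potential, needed for the renewal theorem to apply, must be read off from the Lévy--Khintchine decomposition $\eqref{eqn:LKforsubor}$ of $(Y_s)_{s\geq0}$.
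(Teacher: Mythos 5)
This theorem is quoted in the paper without proof, as a recall of the main result of Bertoin, van Harn and Steutel \cite{BertoinOvershoot99}; there is therefore no proof in the paper to compare against. Your argument is correct and is in fact the same strategy used in that reference and in standard textbook treatments: the compensation formula converts the joint survival function of $(\tilde{V}_t,\tilde{W}_t)$ into a renewal-type convolution
\[
\mathbb{P}(\tilde{V}_t>v,\tilde{W}_t>w)=\int_{[0,t-v)}\Pi\big((t+w-y,\infty)\big)\,U(\dd y),
\]
and the key renewal theorem for the potential measure $U$ of the subordinator gives the limit $\frac{1}{\mu}\int_{v+w}^{\infty}\Pi((s,\infty))\,\dd s$. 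Two remarks. First, you are right to flag non-arithmeticity: as stated, the theorem omits this hypothesis, but it is needed for Blackwell's theorem; in the paper's use case the subordinator is an inverse local time with drift $1$ (cf.\ $\eqref{def:localtimeZero}$), so $U$ is automatically non-arithmetic and even absolutely continuous, and the issue does not arise. Second, the case $w=0$ does not actually require a separate right-continuity argument: the event $\{\tilde{W}_t>0\}$ already forces a jump at $\tilde{\tau}_t$, so the compensation-formula identity holds verbatim for all $v\ge 0$, $w\ge 0$; the atom at $(0,0)$ when $d>0$ is then recovered from the complementary probability, exactly as the paper does later in $\eqref{overshoot_Z}$.
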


To study the asymptotic behaviour as $t$ grows of the overshoots and undershoots of the killed subordinator conditioned on the survival $\{T>\tau_t\}=\{Z_{T_-}>t\}$, we introduce the Esscher transform under which such a rare event becomes a typical event. Namely we introduce the probability measure $\mathbb{Q}$ such that
\begin{equation}\label{eqn:CramertransformP}
	\mathbb{E}[f((Y_s)_{s\leq u})] = \mathbb{E}_{\mathbb{Q}}[e^{-\theta^*Y_u + \psi(\theta^*)u}f((Y_s)_{s\leq u})],
\end{equation}
for any positive and measurable function $f$ and $u>0$.  Theorem 3.9 in \cite[p.82]{KyprianouLevy} gives the following effect of the measure change on $(Y_s)_{s\geq0}$:

\begin{lem}\label{lem:YsousQ}
	Under the probability measure $\mathbb{Q}$, the process $(Y_s)_{s\geq0}$ is a subordinator with drift $d$ and Lévy measure $$\Pi^{\mathbb{Q}}(\mathrm{d}t)=e^{\theta^* t}\Pi(\mathrm{d}t).$$ Moreover, it verifies $$\mathbb{E}_{\mathbb{Q}}[Y_1]=\frac{1}{u^*}.$$
\end{lem}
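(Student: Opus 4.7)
The plan is to verify the claim by directly computing the Laplace exponent of $(Y_s)_{s \geq 0}$ under $\mathbb{Q}$ and then reading off the drift, Lévy measure, and mean from the Lévy--Khintchine decomposition. First, I would note that since the standing hypothesis of this section is $\psi(\theta_+) > \lambda$, Lemma \ref{prop:explosionLaplaceZT} gives $\theta^* \in (0, \theta_+)$, hence $\psi(\theta^*) = \lambda < \infty$, which ensures that the density $e^{\theta^* Y_u - \psi(\theta^*) u}$ in $\eqref{eqn:CramertransformP}$ is well-defined and has $\mathbb{P}$-expectation equal to $1$ on every finite horizon $[0,u]$; this is also the martingale property that makes $\mathbb{Q}$ a bona fide probability measure on path space up to any finite time.

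Next, using $\eqref{eqn:CramertransformP}$ with $f((Y_s)_{s\leq 1}) = e^{\theta Y_1}$ for arbitrary $\theta \in \mathbb{R}$ gives
\begin{equation*}
    \mathbb{E}_{\mathbb{Q}}[e^{\theta Y_1}] = e^{-\psi(\theta^*)} \mathbb{E}[e^{(\theta + \theta^*) Y_1}] = e^{\psi(\theta + \theta^*) - \psi(\theta^*)},
\end{equation*}
so the Laplace exponent of $Y$ under $\mathbb{Q}$ is $\psi^{\mathbb{Q}}(\theta) = \psi(\theta + \theta^*) - \psi(\theta^*)$. Plugging this into the Lévy--Khintchine formula $\eqref{eqn:LKforsubor}$ for $\psi$ and cancelling the contributions at $\theta^*$ gives
\begin{equation*}
    \psi^{\mathbb{Q}}(\theta) = d\theta + \int_0^\infty \bigl(e^{\theta x} - 1\bigr) e^{\theta^* x} \, \Pi(\mathrm{d} x),
\end{equation*}
so by the uniqueness part of Theorem \ref{prop:laplacesubor} the process $Y$ under $\mathbb{Q}$ is again a subordinator, with drift $d$ unchanged and Lévy measure $\Pi^{\mathbb{Q}}(\mathrm{d} x) = e^{\theta^* x} \Pi(\mathrm{d} x)$, as claimed.

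Finally, the mean follows by differentiating: $\mathbb{E}_{\mathbb{Q}}[Y_1] = (\psi^{\mathbb{Q}})'(0) = \psi'(\theta^*)$. Here I would invoke the characterisation $\theta^* = \theta_{1/u^*}$ with $u^* \in (1/a_+, 1/\mathbb{E}[Y_1])$ from Theorem \ref{thm_GD}, so that by Proposition \ref{prop:rappelsurlestransf} $(3)$ we have $\psi'(\theta_{1/u^*}) = 1/u^*$, giving $\mathbb{E}_{\mathbb{Q}}[Y_1] = 1/u^*$. The only mildly delicate point is the differentiation of $\psi^{\mathbb{Q}}$ at $0$, but this is legitimate because $\theta^* < \theta_+$ places us strictly inside the domain of analyticity of $\psi$ guaranteed by Proposition \ref{prop:rappelsurlestransf} $(1)$; no additional integrability check is needed. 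Overall there is no real obstacle here, the result being an essentially mechanical consequence of the Esscher tilting combined with the explicit definition of $\theta^*$ and $u^*$.
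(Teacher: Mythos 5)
Your proof is correct. The paper itself gives no proof of this lemma, relying instead on the citation to Kyprianou's Theorem 3.9 for the identification of the drift and tilted Lévy measure under $\mathbb{Q}$; your direct computation of $\psi^{\mathbb{Q}}(\theta) = \psi(\theta + \theta^*) - \psi(\theta^*)$ followed by Lévy--Khintchine identification is exactly the content of that theorem, and the mean computation $\mathbb{E}_{\mathbb{Q}}[Y_1] = \psi'(\theta^*) = 1/u^*$ via Proposition \ref{prop:rappelsurlestransf}(3) is the part the paper leaves implicit.

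One point worth tightening: computing $\mathbb{E}_{\mathbb{Q}}[e^{\theta Y_1}]$ identifies the marginal law of $Y_1$ under $\mathbb{Q}$, but to read off drift and Lévy measure from the Lévy--Khintchine formula you first need to know that $(Y_s)_{s\geq0}$ is still a Lévy process under $\mathbb{Q}$ (stationary independent increments). This follows from the multiplicative, time-consistent structure of the Esscher density $e^{\theta^* Y_u - \psi(\theta^*)u}$, which you allude to in the first paragraph but do not spell out; it is precisely what the cited Theorem 3.9 delivers. Also note a cosmetic slip: you write that you apply $\eqref{eqn:CramertransformP}$ with $f = e^{\theta Y_1}$, but the displayed identity $\mathbb{E}_{\mathbb{Q}}[e^{\theta Y_1}] = e^{-\psi(\theta^*)}\mathbb{E}[e^{(\theta+\theta^*)Y_1}]$ is obtained by taking $f = e^{(\theta+\theta^*)Y_1}$ (or equivalently by inverting the change of measure); the end result is the same. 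Finally, the condition $\int_0^\infty (1\wedge x)\,e^{\theta^* x}\Pi(\mathrm{d}x) < \infty$, required for $\Pi^{\mathbb{Q}}$ to be a valid Lévy measure of a subordinator, should be observed to follow from $\psi(\theta^*) < \infty$; this is the integrability check you asserted was unnecessary, but it is needed and it is easy.
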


We now prove the following result of convergence for the undershoot and overshoot of a killed subordinator. 

\begin{theorem}\label{thm:asympUnderOver}
	Conditionally on the survival $\{Z_{T^-}>t\}$, as $t\to\infty$, the distribution of $(V_t,W_t)$ converges to the one of $(V_\infty, W_\infty)$ given by
	\begin{equation}\label{eqn:loiVWkilled}
		\mathbb{P}_{(V_\infty,W_\infty)}(\mathrm{d}v,\mathrm{d}w) = \frac{\theta^*}{\lambda} e^{\theta^* v}  \mathbf{1}_{w>0} \Pi(\mathrm{d}w+v) dv + \mathbb{P}(V_\infty=W_\infty=0) \delta_{(0,0)}(\mathrm{d}v,\mathrm{d}w).
	\end{equation}
\end{theorem}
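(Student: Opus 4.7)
The plan is to reduce the conditional limit to a Bertoin-type limit for an unkilled subordinator via the Esscher change of measure \eqref{eqn:CramertransformP}, then combine with the survival asymptotics of Theorem \ref{thm_GD}. First I would decouple the killing: since $T$ is independent of $Y$, on the event $\{Z_{T^-}>t\}$ one has $\tau_t=\tilde\tau_t$, $V_t=\tilde V_t$, $W_t=\tilde W_t$ with the notation of Theorem \ref{thm:overshootBertoin} for the unkilled process. Conditioning on the trajectory of $Y$ and using $\mathbb{P}(T>\tilde\tau_t\mid Y)=e^{-\lambda\tilde\tau_t}$ yields, for any bounded continuous $f\colon\mathbb{R}_+^2\to\mathbb{R}$,
\[
\mathbb{E}\!\left[f(V_t,W_t)\mathbf{1}_{\{Z_{T^-}>t\}}\right] = \mathbb{E}\!\left[f(\tilde V_t,\tilde W_t)e^{-\lambda\tilde\tau_t}\right].
\]

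Next I would apply the Esscher transform \eqref{eqn:CramertransformP} at the stopping time $\tilde\tau_t$ (a.s.\ finite under $\mathbb{P}$ and $\mathbb{Q}$), using a truncation at $\tilde\tau_t\wedge n$ and passing to the limit $n\to\infty$. Since $\psi(\theta^*)=\lambda$ by Lemma \ref{prop:explosionLaplaceZT} and $Y_{\tilde\tau_t}=t+\tilde W_t$, the previous identity becomes
\[
\mathbb{E}\!\left[f(\tilde V_t,\tilde W_t)e^{-\lambda\tilde\tau_t}\right] = e^{-\theta^* t}\,\mathbb{E}_{\mathbb{Q}}\!\left[f(\tilde V_t,\tilde W_t)e^{-\theta^*\tilde W_t}\right].
\]
Under $\mathbb{Q}$, Lemma \ref{lem:YsousQ} says that $Y$ is a subordinator with drift $d$, Lévy measure $\Pi^{\mathbb{Q}}(\mathrm{d}x)=e^{\theta^* x}\Pi(\mathrm{d}x)$, and finite mean $1/u^*$. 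Theorem \ref{thm:overshootBertoin} thus applies and $(\tilde V_t,\tilde W_t)$ converges in distribution under $\mathbb{Q}$ to a pair $(\tilde V_\infty^{\mathbb{Q}},\tilde W_\infty^{\mathbb{Q}})$; since $(v,w)\mapsto e^{-\theta^* w}f(v,w)$ is bounded continuous, the corresponding $\mathbb{Q}$-expectation converges. Dividing by $\mathbb{P}(Z_{T^-}>t)\sim\frac{\lambda u^*}{\theta^*}e^{-\theta^* t}$ from Theorem \ref{thm_GD} yields
\[
\mathbb{E}[f(V_t,W_t)\mid Z_{T^-}>t] \xrightarrow[t\to\infty]{}\frac{\theta^*}{\lambda u^*}\,\mathbb{E}_{\mathbb{Q}}\!\left[f(\tilde V_\infty^{\mathbb{Q}},\tilde W_\infty^{\mathbb{Q}})e^{-\theta^*\tilde W_\infty^{\mathbb{Q}}}\right].
\]

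It remains to identify this limit. Differentiating the survival formula in Theorem \ref{thm:overshootBertoin} under $\mathbb{Q}$ gives the joint law $u^*\,\Pi^{\mathbb{Q}}(\mathrm{d}w+v)\mathrm{d}v\,\mathbf{1}_{w>0}+d\,u^*\,\delta_{(0,0)}(\mathrm{d}v,\mathrm{d}w)$, the atom at $(0,0)$ corresponding to crossings realised by the drift rather than by a jump. Substituting $\Pi^{\mathbb{Q}}(\mathrm{d}w+v)=e^{\theta^*(w+v)}\Pi(\mathrm{d}w+v)$, the weight $e^{-\theta^* w}$ cancels exactly against $e^{\theta^* w}$, leaving the continuous density $\frac{\theta^*}{\lambda}e^{\theta^* v}\,\mathbf{1}_{w>0}\,\Pi(\mathrm{d}w+v)\mathrm{d}v$ and an atom at $(0,0)$ of mass $\theta^* d/\lambda$, which is exactly \eqref{eqn:loiVWkilled}. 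The most delicate point is the rigorous Esscher change of measure at the unbounded stopping time $\tilde\tau_t$, controlled by the truncation argument together with the fact that $\tilde\tau_t<\infty$ $\mathbb{Q}$-a.s.; once that is set, the rest is an algebraic cancellation of exponentials combined with the differentiation of the Bertoin--van Harn--Steutel survival formula.
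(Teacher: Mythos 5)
Your proof is correct and follows essentially the same route as the paper: decouple the killing via $\mathbb{P}(T>\tilde\tau_t\mid Y)=e^{-\lambda\tilde\tau_t}$, apply the Esscher transform \eqref{eqn:CramertransformP} at $\tilde\tau_t$ using $\psi(\theta^*)=\lambda$ and $Y_{\tilde\tau_t}=t+\tilde W_t$, invoke Theorem \ref{thm:overshootBertoin} under $\mathbb{Q}$ together with Theorem \ref{thm_GD} for the survival asymptotics, and finally cancel the $e^{\mp\theta^* w}$ factors. The only differences are that you are more careful than the paper about justifying the Esscher change of measure at the unbounded stopping time $\tilde\tau_t$ via truncation, and you compute the atom mass $d\theta^*/\lambda$ directly (the paper defers that to Proposition \ref{prop:sautdeZasympto}); both are improvements in rigor/completeness rather than a different method.
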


\begin{proof}
	Let us fix $v,w\geq0$ and let $t>0$. The goal of the proof is to compute the limit as $t\to\infty$ of the probability $$\mathbb{P}(V_t>v, W_t>w|Z_{T^-}>t) = \frac{\mathbb{P}(V_t> v, W_t>w)}{\mathbb{P}(Z_{T^-}>t)}.$$
	The asymptotic behaviour of $\mathbb{P}(Z_{T^-}>t)$ is given by $\eqref{equi_ZT}$ so we only have to compute the one of $$\mathbb{P}(V_t> v, W_t>w) = \mathbb{P}(\tilde{V}_t>v, \tilde{W}_t>w, T>\tau_t),$$ where $(\tilde{V}_t,\tilde{W}_t)$ denotes the undershoot and overshoot at $t$ of the (non-killed) subordinator $(Y_s)_{s\geq0}$. By the measure change $\eqref{eqn:CramertransformP}$, we have
	\begin{equation*}
		\begin{split}
			\mathbb{P}(\tilde{V}_t> v, \tilde{W}_t>w, T>\tau_t) &= \mathbb{E}[e^{\lambda \tau_t}\mathbf{1}_{\tilde{V}_t> v, \tilde{W}_t>w}], \\
			&= \mathbb{E}_{\mathbb{Q}}[e^{-\theta^* Y_{\tau_t}} \mathbf{1}_{\tilde{V}_t> v, \tilde{W}_t>w}].
		\end{split}
	\end{equation*}
	Recalling that $\tilde{W}_t = Y_{\tau_t} - t$, we get $$\mathbb{P}(V_t> v, W_t>w) = e^{-\theta^* t}\mathbb{E}_{\mathbb{Q}}[e^{-\theta^* \tilde{W}_t} \mathbf{1}_{\tilde{V}_t> v, \tilde{W}_t>w}].$$
	By Theorem \ref{thm:overshootBertoin}, under $\mathbb{Q}$, $(\tilde{V}_t,\tilde{W}_t)$  converges in law to $(\tilde{V}_\infty,\tilde{W}_\infty)$ as $t\to\infty$. Therefore
	\begin{equation*}
		\mathbb{P}(V_t> v, W_t>w) \underset{t\to\infty}{\sim} e^{-t\theta^*}\mathbb{E}_{\mathbb{Q}}[e^{-\theta^*\tilde{W}_\infty}\mathbf{1}_{\tilde{V}_\infty> v, \tilde{W}_\infty>w}].
	\end{equation*}
	So by Theorem \ref{thm_GD}, we obtain
	\begin{equation*}
		\begin{split}
			\lim_{t\to\infty} \mathbb{P}(V_t> v, W_t> w | Z_{T^-}>t) &= \frac{\theta^*}{u^* \lambda} \mathbb{E}_{\mathbb{Q}}[e^{-\theta^*\tilde{W}_\infty}\mathbf{1}_{\tilde{V}_\infty> v, \tilde{W}_\infty>w}], \\
			&= \mathbb{P}(V_\infty > v, W_\infty>w),
		\end{split}
	\end{equation*}
	which can be written in term of measure densities:
	\begin{equation}\label{eqn:loiVWavecQ}
		\mathbb{P}_{(V_\infty,W_\infty)}(\mathrm{d}v,\mathrm{d}w) = \frac{\theta^*}{u^*\lambda} e^{-\theta^* w} \mathbb{Q}_{(\tilde{V}_\infty,\tilde{W}_\infty)}(\mathrm{d}v,\mathrm{d}w).
	\end{equation}
	Going further, by $\eqref{eqn:loiVWnonkilled}$ and Lemma \ref{lem:YsousQ} we have 
	\begin{equation*}
		\mathbb{Q}(\tilde{V}_\infty> v, \tilde{W}_\infty>w) = u^* \int_{v+w}^{\infty}\Pi^{\mathbb{Q}}((t,\infty)) \mathrm{d}t = u^* \int_{v+w}^{\infty} \int_{t}^{\infty} e^{\theta^* s} \Pi(\mathrm{d}s) \mathrm{d}t.
	\end{equation*}
	Taking the derivative in $v$ we get 
	\begin{equation*}
		\frac{\mathrm{d}}{\mathrm{d}v}\mathbb{Q}(\tilde{V}_\infty>v, \tilde{W}_\infty>w) = u^* \int_{v+w}^{\infty} e^{\theta^* s} \Pi(\mathrm{d}s) = u^* \int_{w}^{\infty} e^{\theta^*(v+h)} \mathbf{1}_{h>0}\Pi(\mathrm{d}h+v).
	\end{equation*}
	So we can identify the law of $(\tilde{V}_\infty, \tilde{W}_\infty)$ under $\mathbb{Q}$ in term of the Lévy measure $\Pi$: 
	$$\mathbb{Q}_{(\tilde{V}_\infty,\tilde{W}_\infty)}(\mathrm{d}v,\mathrm{d}w) = u^* e^{\theta^* (v+w)} \mathbf{1}_{w>0} \Pi(\mathrm{d}w+v)\mathrm{d}v + \mathbb{Q}(\tilde{V}_\infty=\tilde{W}_\infty=0)\delta_{(0,0)}(\mathrm{d}v,\mathrm{d}w).$$
	Finally, from this last equality and formula $\eqref{eqn:loiVWavecQ}$ we deduce our result $\eqref{eqn:loiVWkilled}$ directly.
\end{proof}

As a corollary, we can compute the asymptotic law of the size of the jump of $(Z_s)_{s\geq0}$ above level $t$ conditionally on survival. For all $t>0$, we define the size of the jump of $(Z_s)_{s\geq0}$ at level $t$ by
\begin{equation*}
	J_t=\begin{cases}
		V_t + W_t & \text{ if } \tau_t<T\\ -\infty & \text{ otherwise. }
	\end{cases}
\end{equation*}

\begin{proposition}\label{prop:sautdeZasympto}
	Conditionally on $\{Z_{T^-} > t \}$, as $t\to\infty$, $J_t$ converges in law to $J_\infty$ where
	\begin{equation}\label{eqn:sautdeZasympto}
		\mathbb{P}_{J_\infty}(\mathrm{d}\ell) = \frac{1}{\lambda} (e^{\theta^* \ell}-1)\mathbf{1}_{\ell>0}\Pi(\mathrm{d}\ell) + \frac{d\theta^*}{\lambda}\delta_{0}(\mathrm{d}\ell).
	\end{equation}
\end{proposition}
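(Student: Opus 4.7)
The plan is to obtain the law of $J_\infty$ as the pushforward of the joint distribution of $(V_\infty,W_\infty)$ given by $\eqref{eqn:loiVWkilled}$ under the continuous map $\Phi:(v,w)\mapsto v+w$. The conditional convergence $J_t \to J_\infty$ then follows at once from Theorem \ref{thm:asympUnderOver} and the continuous mapping theorem, so all the real work is in identifying the law of $\Phi(V_\infty, W_\infty)$.

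For the absolutely continuous component of $\eqref{eqn:loiVWkilled}$, I would perform the change of variable $\ell=v+w$ at fixed $v$. The region $\{v\geq 0,\ w>0\}$ is mapped bijectively onto $\{0\leq v<\ell\}$, so for any bounded measurable test function $g$,
\begin{equation*}
    \int g(v+w) \frac{\theta^*}{\lambda}e^{\theta^* v}\mathbf{1}_{w>0} \Pi(\mathrm{d}(v+w)) \mathrm{d}v = \int_0^\infty g(\ell)\left(\int_0^\ell \frac{\theta^*}{\lambda}e^{\theta^* v} \mathrm{d}v\right)\Pi(\mathrm{d}\ell),
\end{equation*}
and the inner integral evaluates to $\frac{1}{\lambda}(e^{\theta^*\ell}-1)$. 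This accounts precisely for the continuous part $\frac{1}{\lambda}(e^{\theta^*\ell}-1)\mathbf{1}_{\ell>0}\Pi(\mathrm{d}\ell)$ in $\eqref{eqn:sautdeZasympto}$.

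It only remains to identify the atom at $0$. Since $\Phi^{-1}(\{0\})=\{(0,0)\}$, this atom equals $\mathbb{P}(V_\infty=W_\infty=0)$. The cleanest route is to enforce total mass $1$ in $\eqref{eqn:loiVWkilled}$: taking $g\equiv 1$ in the computation above shows that the continuous component has mass $\frac{1}{\lambda}\int_0^\infty(e^{\theta^*\ell}-1)\Pi(\mathrm{d}\ell)$, which by the Lévy-Khintchine representation $\eqref{eqn:LKforsubor}$ equals $\frac{1}{\lambda}(\psi(\theta^*)-d\theta^*)$. By Lemma \ref{prop:explosionLaplaceZT}, $\psi(\theta^*)=\lambda$, so the continuous component has mass $1-d\theta^*/\lambda$, forcing $\mathbb{P}(V_\infty=W_\infty=0)=d\theta^*/\lambda$, as announced.

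The main subtlety is precisely this atom computation. One could alternatively compute it directly via the relation $\eqref{eqn:loiVWavecQ}$, invoking the creeping probability $\frac{d}{\mathbb{E}_\mathbb{Q}[Y_1]}=d u^*$ of the $\mathbb{Q}$-subordinator (noting that the drift $d$ is preserved by the Esscher transform) and then applying Lemma \ref{lem:YsousQ}; but using normalization together with the defining identity $\psi(\theta^*)=\lambda$ avoids any separate argument about creeping under the transformed measure.
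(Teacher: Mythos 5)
Your proof is correct and follows essentially the same route as the paper: change of variable $\ell=v+w$ with Tonelli to reduce the continuous part, then identify the atom at $0$ via the normalization $\mathbb{P}(J_\infty\geq0)=1$ combined with the Lévy--Khintchine decomposition $\eqref{eqn:LKforsubor}$ and the identity $\psi(\theta^*)=\lambda$ from Lemma~\ref{prop:explosionLaplaceZT}. The only cosmetic difference is that you phrase the continuous-part computation through a test function $g$ whereas the paper directly computes $\mathbb{P}(J_\infty>\ell)$; the substance is identical.
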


Remark that this also gives the value of $\mathbb{P}(V_\infty=W_\infty=0)$ which corresponds to the limit probability for $(Z_s)_{s\geq0}$ conditioned on surviving to creep through a given level $t>0$ as $t\to\infty$:
\begin{equation}\label{overshoot_Z}
	\underset{t\rightarrow\infty}{\lim}\mathbb{P}(Z_{\tau_t}=t|Z_{T_-}>t) = \mathbb{P}(J_\infty=0) = \frac{d\theta^*}{\lambda}.
\end{equation}

\begin{proof}
	For all $t>0$ we have $J_t = V_t+W_t$ so by Theorem \ref{thm:asympUnderOver} we get the convergence of $J_t$ to $J_\infty = V_\infty + W_\infty$, and we have for all $\ell\geq0$ \[ \mathbb{P}(J_\infty > \ell ) = \mathbb{P}(V_\infty+W_\infty>\ell) = \int_{0}^{\infty} \int_{\ell-v}^{\infty} \frac{\theta^*}{\lambda} e^{\theta^*v} \mathbf{1}_{w>0}\Pi(\mathrm{d}w+v) \mathrm{d}v. \] We can rewrite the last integral with the change of variable $x=w+v$ and then use Tonelli's theorem to obtain \[ \mathbb{P}(J_\infty > \ell ) = \int_{\ell}^{\infty} \int_{0}^{x} \frac{\theta^*}{\lambda}e^{\theta^*v}\mathrm{d}v\Pi(\mathrm{d}x) = \int_{\ell}^{\infty} \frac{1}{\lambda}(e^{\theta^*x}-1)\Pi(\mathrm{d}x).\]
	To complete the proof, we still need to compute \[\mathbb{P}(J_\infty=0) = 1-\mathbb{P}(J_\infty>0)=1-\frac{1}{\lambda}\int_{0}^{\infty}(e^{\theta^*x} -1 )\Pi(\mathrm{d}x).\] But recall that $\psi(\theta^*)=\lambda$ so using $\eqref{eqn:LKforsubor}$ the Lévy-Khintchine decomposition of $\psi$, we get \[\int_{0}^{\infty}(e^{\theta^*x} -1 )\Pi(\mathrm{d}x) = \lambda - d\theta^*.\] It follows \[\mathbb{P}(J_\infty=0) = 1 - \frac{\lambda - d\theta^*}{\lambda} = \frac{d\theta^*}{\lambda}. \] \qedhere
\end{proof}

As a side note, we can compare our result with the non-killed case. In \cite{BertoinOvershoot99}, the authors also compute the asymptotic law of the jumps of $(Y_s)_{s\geq0}$ namely $\tilde{J}_\infty = \tilde{V}_\infty + \tilde{W}_\infty$ proving that \[ \mathbb{P}_{\tilde{J}_\infty}(\mathrm{d}\ell) = \frac{1}{\mathbb{E}[Y_1]} \ell \Pi(\mathrm{d}\ell). \] This imply that asymptotically, the jumps of $(Y_s)_{s\geq0}$ are linearly size-biased and by a symmetry argument that, the undershoot and overshoot $(\tilde{V}_\infty,\tilde{W}_\infty)$ are distributed as $(U\tilde{J}_\infty, (1-U)\tilde{J}_\infty)$. We proved that the behavior differs for the killed subordinator conditioned on survival. Indeed $\eqref{eqn:sautdeZasympto}$ shows that conditionally on survival, the jumps of $(Z_s)_{s\geq0}$ are also size-biased but by an exponential factor. Moreover, we can compute the laws $V_\infty$ and $W_\infty$ conditionally on $J_\infty$.

\begin{corollary}
	Conditionally on $J_\infty$, the laws of $V_\infty$ and $W_\infty = J_\infty - V_\infty$ are given by
	\begin{equation}\label{eqn:VsachantJ}
		\mathbb{P}_{V_\infty|J_\infty}(\mathrm{d}v) = \frac{\theta^*e^{\theta^*v}}{e^{\theta^*J_\infty}-1} \mathbf{1}_{v<J_\infty}\mathrm{d}v,
	\end{equation}
	\begin{equation}\label{eqn:WsachantJ}
		\mathbb{P}_{W_\infty|J_\infty}(\mathrm{d}w) = \frac{\theta^*e^{-\theta^*w}}{1-e^{-\theta^*J_\infty}}\mathbf{1}_{w<J_\infty}\mathrm{d}w. 
	\end{equation}
\end{corollary}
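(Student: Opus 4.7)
The plan is to derive both conditional laws directly from the joint distribution of $(V_\infty,W_\infty)$ given in Theorem \ref{thm:asympUnderOver} together with the marginal law of $J_\infty=V_\infty+W_\infty$ computed in Proposition \ref{prop:sautdeZasympto}.

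First, I would perform the change of variables $(v,w)\mapsto(v,\ell)$ with $\ell=v+w$ in the expression
\[ \mathbb{P}_{(V_\infty,W_\infty)}(\mathrm{d}v,\mathrm{d}w) = \frac{\theta^*}{\lambda} e^{\theta^* v}  \mathbf{1}_{w>0} \Pi(\mathrm{d}w+v)\, \mathrm{d}v + \mathbb{P}(V_\infty=W_\infty=0)\, \delta_{(0,0)}(\mathrm{d}v,\mathrm{d}w). \]
The Jacobian is $1$; the measure $\Pi(\mathrm{d}w+v)$ becomes $\Pi(\mathrm{d}\ell)$ after this substitution since $w=\ell-v$; and the condition $w>0$ translates into $v<\ell$. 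Restricting to $\ell>0$ (the continuous part of $J_\infty$), this yields the joint law
\[ \mathbb{P}_{(V_\infty,J_\infty)}(\mathrm{d}v,\mathrm{d}\ell) = \frac{\theta^*}{\lambda}\, e^{\theta^* v}\, \mathbf{1}_{0<v<\ell}\, \mathrm{d}v\, \Pi(\mathrm{d}\ell), \qquad \ell>0. \]

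Next, I would divide by the continuous part of the law of $J_\infty$ given in Proposition \ref{prop:sautdeZasympto}, namely $\frac{1}{\lambda}(e^{\theta^*\ell}-1)\mathbf{1}_{\ell>0}\Pi(\mathrm{d}\ell)$. The factor $\frac{1}{\lambda}\Pi(\mathrm{d}\ell)$ cancels, which immediately gives
\[ \mathbb{P}_{V_\infty|J_\infty}(\mathrm{d}v) = \frac{\theta^* e^{\theta^* v}}{e^{\theta^* J_\infty}-1}\, \mathbf{1}_{v<J_\infty}\, \mathrm{d}v, \]
establishing \eqref{eqn:VsachantJ}. For the second formula, I would use the identity $W_\infty=J_\infty-V_\infty$: substituting $w=J_\infty-v$ in the preceding density (with Jacobian $1$) transforms $e^{\theta^* v}$ into $e^{\theta^*(J_\infty-w)}$, and after factoring $e^{\theta^*J_\infty}$ from numerator and denominator one obtains
\[ \mathbb{P}_{W_\infty|J_\infty}(\mathrm{d}w) = \frac{\theta^* e^{-\theta^* w}}{1-e^{-\theta^* J_\infty}}\, \mathbf{1}_{w<J_\infty}\, \mathrm{d}w, \]
which is \eqref{eqn:WsachantJ}.

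There is essentially no obstacle here: the statement is a direct consequence of the joint law derived in Theorem \ref{thm:asympUnderOver} together with the marginal in Proposition \ref{prop:sautdeZasympto}, and the computation reduces to a change of variables followed by cancellation. The only point requiring minimal care is the handling of the atom $\delta_{(0,0)}$, which contributes only to the atom $\delta_0$ in the law of $J_\infty$ and hence does not appear in the conditional laws corresponding to $J_\infty>0$.
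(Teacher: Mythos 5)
Your proposal is correct and takes essentially the same route as the paper: both compute the joint law of $(V_\infty,J_\infty)$ from the joint law of $(V_\infty,W_\infty)$ via the change of variable $\ell=v+w$, recognize the marginal of $J_\infty$ from Proposition \ref{prop:sautdeZasympto}, and read off the conditional density; the paper phrases the calculation through tail probabilities $\mathbb{P}(V_\infty>v,J_\infty>\ell)$ while you work directly with the measures, a stylistic rather than substantive difference. Your closing remark about the atom at $(0,0)$ contributing only to $J_\infty=0$ is a small useful clarification that the paper leaves implicit.
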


\begin{proof}
	By the same arguments as in the proof of Proposition \ref{prop:sautdeZasympto} we compute the joint law of $(V_\infty, J_\infty)$. For all $v,l\geq0$ we have
	\begin{equation*}
		\begin{split}
			\mathbb{P}(V_\infty>v,J_\infty>\ell)&=\mathbb{P}(V_\infty>v,V_\infty+W_\infty>\ell),\\
			&\int_{v}^{\infty}\int_{\ell-u} \frac{\theta^*}{\lambda} e^{\theta^*u}\Pi(\mathrm{d}w+u)\mathbf{1}_{w>0}\mathrm{d}u.
		\end{split}
	\end{equation*}
	Now by the change of variable $x=w+u$ and switching the integrals we get
	\begin{equation*}
		\begin{split}
			\mathbb{P}(V_\infty>v,J_\infty>\ell)&=\int_{\ell}^{\infty}\int_{v}^{\infty} \frac{\theta^*}{\lambda}e^{\theta^*u}\mathbf{1}_{x>u}\mathrm{d}u\Pi(\mathrm{d}x),\\
			&=\int_{\ell}^{\infty} \int_{v}^{\infty} \theta^*\frac{e^{\theta^*u}}{e^{\theta^*x}-1} \mathbf{1}_{x>u}\mathrm{d}u \frac{1}{\lambda}(e^{\theta^*x}-1)\Pi(\mathrm{d}x).
		\end{split}
	\end{equation*}
	So we recognize the law of $J_\infty$ $\eqref{eqn:sautdeZasympto}$ and can identify the law of $V_\infty$ conditionally on $J_\infty$, which gives $\eqref{eqn:VsachantJ}$. \\
	We can deduce the law of $W_\infty$ conditionally on $J_\infty$ by taking $w=\ell-u$ in the last computations.
\end{proof}

In particular, in comparison to the non-killed case, $\eqref{eqn:WsachantJ}$ implies that conditionally on the size of the jump $J_\infty$ the asymptotic overshoot $W_\infty$ is not uniformly distributed in $(0,J_\infty)$, but it is distributed as an truncated exponential conditioned to be smaller than $J_\infty$.

\section{Application to quasi-stationary distributions}\label{sec:appqsd}

Let $(X_t)_{t\geq0}$ be an irreducible continuous time Markov chain on $E=\mathbb{N}\cup\{\delta\}$ with $\delta$ the unique absorbing state. Let us denote by $Q=(q_{i,j})_{i,j\in\mathbb{N}\cup\{\delta\}}$ its rate matrix and by $T_\delta$ its absorption time. We suppose that $(X_t)_{t\geq0}$ verifies the hypothesis $\eqref{hyp:excursions}$ and $\eqref{eqn:cond-killedas}$, in particular this mean that \[ q_{i,\delta} = \lambda \mathbf{1}_{i=0}, \] with $\lambda>0$ the death rate from $0$ to $\delta$. We first state the following Lemma ensuring that under these conditions, the Yaglom limit $\eqref{eqn:defYaglomlim}$ does not depend on the starting position of $(X_t)_{t\geq0}$. 
\begin{lem}\label{lem:yaglomindeinitial}
	For $\lambda<\lambda_c$ the Yaglom limit $\eqref{eqn:defYaglomlim}$ does not depend on the starting point $j\in\mathbb{N}$, meaning that for any $j\in\mathbb{N}$ \[ \underset{t\to\infty}{\lim} \mathbb{P}_j(X_t=i|t<T_\delta) = \underset{t\to\infty}{\lim} \mathbb{P}_0(X_t=i|t<T_\delta).\]
\end{lem}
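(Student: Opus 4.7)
The key idea is that, under hypothesis $(\mathcal{A})$, a chain started from any $j\neq 0$ cannot be absorbed before first hitting $0$. Setting $\tau_0:=\inf\{t\geq 0: X_t=0\}$, the plan is to apply the strong Markov property at $\tau_0$ to obtain the decompositions
\[
\mathbb{P}_j(X_t=i,\,t<T_\delta) = \mathbb{P}_j(X_t=i,\tau_0>t) + \mathbb{E}_j\bigl[\mathbf{1}_{\tau_0\leq t}\,\mathbb{P}_0(X_{t-\tau_0}=i,\,t-\tau_0<T_\delta)\bigr],
\]
\[
\mathbb{P}_j(t<T_\delta) = \mathbb{P}_j(\tau_0>t) + \mathbb{E}_j\bigl[\mathbf{1}_{\tau_0\leq t}\,\mathbb{P}_0(t-\tau_0<T_\delta)\bigr],
\]
using the inclusion $\{\tau_0>t\}\subset\{t<T_\delta\}$ that $(\mathcal{A})$ guarantees. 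One would then divide both quantities by $\mathbb{P}_0(t<T_\delta)$ and pass to the limit inside the expectations.

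The normalisation is provided by Theorem \ref{thm_GD}. Indeed, the subordinator representation identifies $\mathbb{P}_0(t<T_\delta)$ with $\mathbb{P}(Z_{T_-}>t)$, and since $\lambda<\lambda_c=\psi(\theta_+)$ we are in the regime $\psi(\theta_+)>\lambda$ required by that theorem. Thus
\[
\mathbb{P}_0(t<T_\delta)\underset{t\to\infty}{\sim}\frac{\lambda u^*}{\theta^*}\,e^{-\theta^* t},
\]
and for any fixed $s\geq 0$, $\mathbb{P}_0(t-s<T_\delta)/\mathbb{P}_0(t<T_\delta)\to e^{\theta^* s}$. Combining this with the existence of the Yaglom limit from $0$ (handled separately in Theorem \ref{thm:mainintro}), one obtains the pointwise convergence
\[
\frac{\mathbb{P}_0(X_{t-s}=i,\,t-s<T_\delta)}{\mathbb{P}_0(t<T_\delta)}\xrightarrow[t\to\infty]{}\nu^*(i)\,e^{\theta^* s}.
\]
A dominated convergence argument then yields
\[
\frac{\mathbb{E}_j[\mathbf{1}_{\tau_0\leq t}\,\mathbb{P}_0(X_{t-\tau_0}=i,\,t-\tau_0<T_\delta)]}{\mathbb{P}_0(t<T_\delta)}\xrightarrow[t\to\infty]{}\nu^*(i)\,\mathbb{E}_j[e^{\theta^*\tau_0}],
\]
and its analogue for the denominator with $\nu^*(i)$ replaced by $1$. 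Dividing and cancelling the common factor $\mathbb{E}_j[e^{\theta^*\tau_0}]$ gives the claim.

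The main technical obstacle is twofold: verifying the integrability $\mathbb{E}_j[e^{\theta^*\tau_0}]<\infty$ required for dominated convergence, and showing that the residual terms $\mathbb{P}_j(X_t=i,\tau_0>t)\leq\mathbb{P}_j(\tau_0>t)$ are $o(e^{-\theta^* t})$ so as to be negligible after normalisation. Both should follow from the excursion-theoretic description of $(\bar X_t)_{t\geq 0}$: the portion of an excursion from $0$ following its first visit to $j$ is distributed as the path of $\bar X$ under $\mathbb{P}_j$ up to $\tau_0$, and its length is expected to have exponential moments at every rate strictly below $\theta_+$. Since $\lambda<\lambda_c$ forces the strict inequality $\theta^*<\theta_+$ via Lemma \ref{prop:explosionLaplaceZT}, one can pick an exponent $\theta\in(\theta^*,\theta_+)$ and use Markov's inequality to obtain both $\mathbb{E}_j[e^{\theta\tau_0}]<\infty$ and the sharper tail bound $\mathbb{P}_j(\tau_0>t)=O(e^{-\theta t})=o(e^{-\theta^* t})$.
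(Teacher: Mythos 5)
Your proposal follows essentially the same route as the paper: decompose at $\tau_0$, show the pre-$\tau_0$ contribution is $o(e^{-\theta^* t})$, apply the strong Markov property and Theorem~\ref{thm_GD} on the post-$\tau_0$ contribution, and conclude by dominated convergence using $\mathbb{E}_j[e^{\theta^*\tau_0}]<\infty$, which both arguments derive from comparing $\tau_0$ under $\mathbb{P}_j$ with an excursion length and from the strict inequality $\theta^*<\theta_+$ granted by $\lambda<\lambda_c$. The only cosmetic difference is that you normalise both numerator and denominator by $\mathbb{P}_0(t<T_\delta)$ and cancel the common factor $\mathbb{E}_j[e^{\theta^*\tau_0}]$, whereas the paper works directly with the ratio $\mathbb{P}_0(T_\delta>t-s)/\mathbb{P}_j(T_\delta>t)$; these are equivalent presentations of the same estimate.
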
 
This result justify the decomposition of the trajectory of $(X_t)_{t\geq0}$ under $\mathbb{P}_0$ given below, but its proof relies on technical computation which is of little interest for the understanding of our method so we give it at the end of the present section. We refer to future computations and more precisely $\eqref{eqn:valuelambdac}$ for the exact value of the critical value $\lambda_c$.

\subsection{Excursions}

The path of $(X_t)_{t\geq0}$ under law $\mathbb{P}_0$ can be decomposed as follows: when in state $0$, the process either jumps to some other state $i\in\mathbb{N}$ or jumps to $\delta$ and dies; if it survives it then makes a random excursion among the states $\mathbb{N^*}$ from which it cannot die until it reaches the state $0$ again; after a geometric number of independent excursions, it will eventually jump to $\delta$. 
In order to use known results on excursions, we work with $(\bar{X}_t)_{t\geq0}$ the non-killed version of $(X_t)_{t\geq0}$ which is an irreducible Markov chain on $\mathbb{N}$ with transition given by $\bar{Q}$ defined by \[\bar{q}_{0,0} = q_{0,0} + \lambda \mbox{ and } \bar{q}_{i,j} = q_{i,j} \mbox{ for all } i\neq j\in\mathbb{N}.\]

Let $g_1 = \inf \{t>0, \bar{X}_t\neq0\}$, for any $n\in\mathbb{N}^*$ define 
\begin{equation*}
	d_{n} := \inf\{t>g_n, \bar{X}_t = 0\}, \mbox{ and } g_{n+1} := \inf\{t>d_n, \bar{X}_t \neq 0 \}.
\end{equation*}
With those definitions, $(d_n)_{n\geq0}$ corresponds to the consecutive times of arrival at $0$ and $(g_n)_{n\geq0}$ to the times of departure from $0$, i.e. when the process $(\bar{X}_t)_{t\geq0}$ starts a new excursion above $0$. Thus, we define $(e_n(r))_{r\geq0}$ the $n$-th excursion of $(\bar{X}_t)_{t\geq0}$:
\begin{equation*}
	e_n(r) = \bar{X}_{g_n + r} \mathbf{1}_{r<L(e_n)},
\end{equation*}
where $L(e_n) = d_n - g_n$ is the length of the $n$-th excursion. By the strong Markov property, $(e_n)_{n\geq 1}$ is an i.i.d. sequence of random variables with values in the set of trajectories i.e. the set of right continuous functions with left limits from $\mathbb{R_+}$ to $\mathbb{N}$ \cite{ItoPPP}. We call their common law the \textit{excursion law} of $(\bar{X}_t)_{t\geq0}$ and we define a process $(e(r))_{r\geq0}$ following this law called a \textit{canonical excursion}. In addition, we denote by $e^L$ the canonical excursion conditioned on having a length $L(e)=L$. Finally we define $\eta$ the law of $L(e)$. In particular, the law of $e$ is characterised by its law when conditioned on having a certain length i.e. we have \[ \mathbb{E}[F(e)] = \int_{0}^{\infty} \mathbb{E}[F(e^\ell)] \eta(\dd\ell), \] for any measurable and positive function $F$.

\begin{figure}[h]
	\begin{tikzpicture}[scale=0.9]
		\draw (-1,0) -- (5.5,0);
		\draw (5.4,-0.2) -- (5.6,0.2);
		\draw (5.6,-0.2) -- (5.8,0.2); 
		\draw (5.7,0) -- (10.6,0);
		\draw (10.5,-0.2) -- (10.7,0.2);
		\draw (10.7,-0.2) -- (10.9,0.2);
		\draw[->] (10.8,0) -- (14,0);
		\draw (14,0) node[right] {$t$};
		\draw[->] (0,-1) -- (0,5);
		\draw (0,5) node[above] {$X_t$};
		\draw (0,0) node[below left] {$0$};
		\draw (0,-1) node[left] {$\delta$};
		\draw [dashed] (0,-1) -- (14,-1);
		\draw [thick] (0,0) -- (0.5,0);
		\draw [blue](0.5,0) node[below] {$g_1$};
		\draw [blue] (0.5,-0.07) -- (0.5,0.07);
		\draw [blue,dotted] (0.5,0) -- (0.5,1);
		\draw [blue,thick] (0.5,1) -- (1.2,1);
		\draw [blue,dotted] (1.2,1) -- (1.2,3);
		\draw [blue,thick] (1.2,3) -- (1.5,3);
		\draw [blue,dotted] (1.5,3) -- (1.5,0);
		\draw [thick] (1.5,0) -- (2.2,0);
		\draw [blue] (1.5,0) node[below] {$d_1$};
		\draw [blue] (1.5,-0.07) -- (1.5,0.07);
		\draw [red] (2.2,0) node[below] {$g_2$};
		\draw [red] (2.2,-0.07) -- (2.2,0.07);
		\draw [red,dotted] (2.2,0) -- (2.2,2);
		\draw [red,thick] (2.2,2) -- (3,2);
		\draw [red,dotted] (3,2) -- (3,4);
		\draw [red,thick] (3,4) -- (3.2,4);
		\draw [red,dotted] (3.2,4) -- (3.2,1);
		\draw [red,thick] (3.2,1) -- (3.5,1);
		\draw [red,dotted] (3.5,1) -- (3.5,0);
		\draw [thick] (3.5,0) -- (3.9,0);
		\draw [red] (3.5,0) node[below] {$d_2$};
		\draw [red] (3.5,-0.07) -- (3.5,0.07);
		\draw (3.9,0) node[below] {$g_3$};
		\draw (3.9,-0.07) -- (3.9,0.07);
		\draw [dotted] (3.9,0) -- (3.9,3);
		\draw (3.9,3) -- (4.5,3);
		\draw [thick] (6.5,0) -- (7.2,0);
		\draw [green,dotted] (7.2,0) -- (7.2,2);
		\draw [green] (7.2,0) node[below] {$g^{(s)}$};
		\draw [green] (7.2,-0.07) -- (7.2,0.07);
		\draw [green,thick] (7.2,2) -- (7.3,2);
		\draw [green,dotted] (7.3,2) -- (7.3,1);
		\draw [green,thick] (7.3,1) -- (7.5,1);
		\draw [green,dotted] (7.5,1) -- (7.5,4);
		\draw [green,thick] (7.5,4) -- (8,4);
		\draw [green,dotted] (8,4) -- (8,1);
		\draw [green,thick] (8,1) -- (9,1);
		\draw [green,dotted] (9,1) -- (9,2);
		\draw [green,thick] (9,2) -- (9.4,2);
		\draw [green,dotted] (9.4,2) -- (9.4,0);
		\draw [thick] (9.4,0) -- (10,0);
		\draw [green] (9.4,0) node[below] {$d^{(s)}$};
		\draw [green] (9.4,-0.07) -- (9.4,0.07);
		\draw (8,0) node[below] {$s$};
		\draw (8,-0.07) -- (8,0.07);
		\draw [thick] (11.5,0) -- (12,0);
		\draw [dotted] (12,0) -- (12,-1);
		\draw [thick] (12,-1) -- (14,-1);
		\draw (12,0) node[below right] {$T$};
		\draw (12,-0.07) -- (12,0.07);
	\end{tikzpicture}
	\caption{Sample trajectory for $(X_t)_{t\geq0}$ with its excursion decomposition.}
\end{figure}

Let us define the notion of \textit{excursion straddling time $t$}. Fix a time $t>0$, we define
\begin{equation*}
	g^{(t)} := \sup\{s<t, \bar{X}_t = 0\}, \mbox{ and } d^{(t)} := \inf\{s>t, \bar{X}_t=0\}.
\end{equation*}

If $\{\bar{X}_t>0\}$, we have $\{g^{(t)}<d^{(t)}\}$ and we define $(e^{(t)}(r))_{r\geq0}$ the excursion straddling $t$ with the identity: \[ e^{(t)}(r) = \bar{X}_{g^{(t)}+r} \mathbf{1}_{r<L(e^{(t)})}, \] where $L(e^{(t)}) = d^{(t)} - g^{(t)}$ is the length of the excursion. Else, we have $\{g^{(t)} = d^{(t)} = t\}$ and $\{\bar{X}_t=0\}$. 

We prove the following proposition stating that the law of $e^{(t)}$ the excursion straddling $t$, is the same as a canonical excursion of length $L(e^{(t)})$. 

\begin{proposition}\label{prop:excursion}
	Let $t>0$. Then, for any measurable and positive function $f$ we have 
	\begin{equation}\label{eqn:excursion}
		\mathbb{E}[f(e^{(t)},t-g^{(t)}, d^{(t)}-t)|g^{(t)}<d^{(t)}] = \mathbb{E}[f(e^{d^{(t)}-g^{(t)}},t-g^{(t)},d^{(t)}-t)|g^{(t)}<d^{(t)}].
	\end{equation}
\end{proposition}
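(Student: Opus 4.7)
My plan is to decompose the event $\{g^{(t)} < d^{(t)}\}$ according to which excursion of $(\bar{X}_t)_{t \geq 0}$ straddles the deterministic time $t$. Writing
\[
\{g^{(t)} < d^{(t)}\} = \bigsqcup_{n \geq 1} \{g_n \leq t < d_n\}
\]
and noting that on the $n$-th piece $g^{(t)} = g_n$, $d^{(t)} = d_n$ and $e^{(t)} = e_n$, it suffices to prove, for each $n \geq 1$ and each measurable non-negative $f$,
\[
\mathbb{E}\bigl[f(e_n, t-g_n, d_n-t)\mathbf{1}_{g_n \leq t < d_n}\bigr]
=
\mathbb{E}\bigl[f(e^{L(e_n)}, t-g_n, d_n-t)\mathbf{1}_{g_n \leq t < d_n}\bigr],
\]
then sum over $n$ and normalise by $\mathbb{P}(g^{(t)} < d^{(t)})$. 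Here $e^{L(e_n)}$ is understood as a canonical excursion sampled, conditionally on $L(e_n)$, from $\mathbb{P}(e \in \cdot \mid L(e)=L(e_n))$.

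The main ingredient is the independence $g_n \perp e_n$ together with $e_n \egaldistr e$. To obtain it, I first apply the strong Markov property at the stopping time $d_{n-1}$ (with $d_0 := 0$): the shifted process $(\bar{X}_{d_{n-1}+r})_{r\geq 0}$ has the law of $\bar{X}$ under $\mathbb{P}_0$ and is independent of $\mathcal{F}_{d_{n-1}}$, so $(g_n - d_{n-1}, e_n)$ is distributed as $(g_1, e_1)$ and is independent of $d_{n-1}$. Next, at state $0$ the continuous-time Markov chain structure makes the exponential holding time $g_1$ independent of the post-exit state $\bar{X}_{g_1}$; applying the strong Markov property at $g_1$, the excursion $e_1$ (whose conditional law depends only on $\bar{X}_{g_1}$) is thus independent of $g_1$. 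Combining the two steps yields $g_n \perp e_n$ and $e_n \egaldistr e$.

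Granted this, the identity $(e_n, L(e_n), g_n) \egaldistr (e^{L(e_n)}, L(e_n), g_n)$ follows directly from the disintegration
\[
\mathbb{E}[F(e)] = \int_{0}^{\infty} \mathbb{E}[F(e^\ell)]\, \eta(\mathrm{d}\ell)
\]
recalled in the excerpt, which is precisely the definition of $e^\ell$ as the regular conditional law of $e$ given $L(e) = \ell$, together with the independence of $g_n$ from both $e_n$ and from the fresh sample $e^{L(e_n)}$. Since the quantities
\[
f(e_n, t-g_n, d_n-t)\mathbf{1}_{g_n \leq t < d_n} \quad\text{and}\quad f(e^{L(e_n)}, t-g_n, d_n-t)\mathbf{1}_{g_n \leq t < d_n}
\]
are measurable functions of $(e_n, L(e_n), g_n)$ and $(e^{L(e_n)}, L(e_n), g_n)$ respectively (using $d_n - t = g_n + L(e_n) - t$), their expectations coincide. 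Summing over $n \geq 1$ (at most one index is non-zero per trajectory, so no convergence issue arises) and dividing by $\mathbb{P}(g^{(t)} < d^{(t)})$ produces \eqref{eqn:excursion}.

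The delicate point is justifying the independence $g_n \perp e_n$ carefully; once that is in place, the statement reduces to a reformulation of the definition of $e^{\ell}$ via the disintegration of the excursion law by its length.
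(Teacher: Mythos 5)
Your proof is correct and follows essentially the same route as the paper's: decompose the straddling event $\{g^{(t)}<d^{(t)}\}$ according to the excursion index $n$, disintegrate over $(g_n,d_n)$, and invoke the definition of the canonical excursion conditioned on its length to replace $e_n$ by $e^{L(e_n)}$. The only difference is that you spell out the independence $g_n \perp e_n$ via two applications of the strong Markov property (at $d_{n-1}$, then at the exponential exit from state $0$), whereas the paper compresses this into the single phrase ``Markov property at time $g_n$.''
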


\begin{proof}
	The excursion $e^{(t)}$ corresponds to only one of the excursions $(e_n)_{n\in\mathbb{N^*}}$, so summing over all its possible number we have 
	\begin{equation*}
		\mathbb{E}[f(e^{(t)},t-g^{(t)},d^{(t)}-t)\mathbf{1}_{\{g^{(t)}<d^{(t)}\}}] = \sum_{n=1}^{\infty}\mathbb{E}[f(e_n,t-g_n,d_n-t)\mathbf{1}_{\{g_n<t<d_n\}}].
	\end{equation*}
	Moreover, for all $n\in\mathbb{N^*}$ using the Markov property at time $g_n$:
	\begin{equation*}
		\begin{split}
			\mathbb{E}[f(e_n,t-g_n,d_n-t)\mathbf{1}_{\{g_n<t<d_n\}}] = \int_{0}^{\infty} \int_{0}^{\infty} \mathbb{E}[f(e_n,t-g,d-t)&|L(e_n)=d-g] \\ &\times\mathbb{P}(g_n\in \mathrm{d}g,d_n\in \mathrm{d}d),
		\end{split}
	\end{equation*}	
	but by the definition of the canonical excursion, the last expectation becomes 
	\begin{equation*}
		\mathbb{E}[f(e_n,t-g,d-t)|L(e_n)=d-g] = \mathbb{E}[f(e^{d-g},t-g,d-t)].
	\end{equation*}
	Finally, we can integrate back to $d^{(t)}$ and $g^{(t)}$ to get the wanted equality: 
	\begin{equation*}
		\begin{split}
			\mathbb{E}[f(e^{(t)},t-g^{(t)},d^{(t)}-t)\mathbf{1}_{\{g^{(t)}<d^{(t)}\}}] &= \sum_{n=1}^{\infty} \int_{0}^{\infty} \int_{0}^{\infty} \mathbb{E}[f(e^{d-g},t-g,d-t)] \\ &\times\mathbb{P}(g_n\in \mathrm{d}g,d_n\in \mathrm{d}d), \\
			&= \mathbb{E}[f(e^{d^{(t)}-g^{(t)}}t-g^{(t)},d^{(t)}-t)\mathbf{1}_{\{g^{(t)}<d^{(t)}\}}].
		\end{split}
	\end{equation*}
\end{proof}

\subsection{Inverse of the local time at \texorpdfstring{$0$}{0} }

We give an alternative construction of $(Z_s)_{s\geq0}$ the inverse local time of $(X_t)_{t\geq0}$ at $0$ using its excursions. To simplify future computations, the given process will not be the formal inverse of the local time but it will have the same law. 

Let $(Y_s)_{s\geq0}$ be compound Poisson process with drift $1$, whose jumps $(\xi_n)_{n\in\mathbb{N^*}}$ are distributed according to $\eta$ the law of the length $L(e)$ and occur at times described by a Poisson process $(N_s)_{s\geq0}$ of intensity $-\bar{q}_{0,0} =- (q_{0,0} + \lambda)$. Denote by $(\sigma_s)_{s\geq0}$ the inverse local time at $0$ of $(\bar{X}_t)_{t\geq0}$. The decomposition of the trajectories of $(\bar{X}_t)_{t\geq0}$ into excursions gives 
\begin{equation}\label{eqn:def Y}
	(Y_s)_{s\geq0} \egaldistr (\sigma_s)_{s\geq0}.
\end{equation} 
Moreover, remark that Lemma \ref{lem:psipourpoisson} gives us the Lévy measure $\Pi$ of $(Y_s)_{s\geq0}$: 
\begin{equation}\label{eqn:Piaveceta}
	\Pi(dx)=-\bar{q}_{0,0}\eta(dx).
\end{equation}
Now, let us introduce $T^{(\lambda)}$ an exponential random variable of parameter $\lambda$ independent of $(\bar{X}_t)_{t\geq0}$ and $(Y_s)_{s\geq0}$. We define $(Z_s)_{s\geq0}$ as the subordinator $(Y_s)_{s\geq0}$ killed at time $T^{(\lambda)}$ i.e.  
\begin{equation}\label{def:localtimeZero}
	Z_s = \left\{
	\begin{array}{ll}
		s + \sum_{j=0}^{N_s} \xi_j & \mbox{if } s<T^{(\lambda)}\\
		-\infty & \mbox{else}
	\end{array}
	\right. \mbox{ for all } s\geq0.
\end{equation}

\begin{lem}\label{lem:lienentreZetX}
	Under assumption $\mathcal{A}$, $(Z_s)_{s\geq0}$ has the same law as the inverse local time at $0$ of $(X_t)_{t\geq0}$. \\
	In particular, if we denote $\tau_t := \inf \{s>0, Z_s\geq t\}$, then
	\begin{equation}\label{eqn:lienZetX}
		\mathbb{P}_0(X_t=0|T_\delta>t)=\mathbb{P}(Z_{\tau_t^-}=t|T^{(\lambda)}>\tau_t).
	\end{equation}
\end{lem}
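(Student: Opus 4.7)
The plan is to reduce the claim to the already-established identity $(Y_s)_{s\geq 0}\egaldistr(\sigma_s)_{s\geq 0}$ of \eqref{eqn:def Y}, together with the excursion-based construction of $X$ from its non-killed version $\bar X$ and an independent exponential killing clock.

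First I would establish the law identity between $(Z_s)_{s\geq 0}$ and the killed inverse local time of $X$ at $0$. Under hypothesis $(\mathcal{A})$, the only way for $X$ to be absorbed is to jump from $0$ to $\delta$ at rate $\lambda$; equivalently, the killing clock of $X$ accumulates only while $\bar X$ visits $0$, and rings the first time the local time $L$ of $\bar X$ at $0$ crosses an independent $T^{(\lambda)}\sim\mathrm{Exp}(\lambda)$. This yields the pathwise identity in law
\[
(X_s)_{s<T_\delta}\egaldistr(\bar X_s)_{s<\sigma_{T^{(\lambda)}}},
\]
already recorded in the introduction. Transporting to inverse local times and invoking \eqref{eqn:def Y} along with the definition \eqref{def:localtimeZero} gives the first assertion.

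For \eqref{eqn:lienZetX} I would then translate both numerator and denominator into events involving $(\sigma,T^{(\lambda)})$. The denominator is handled by $\{T_\delta>t\}=\{\sigma_{T^{(\lambda)}}>t\}=\{T^{(\lambda)}>\tau_t\}$, where $\tau_t$ denotes the right-continuous inverse of $\sigma$ at level $t$. For the numerator, $\{X_t=0\}\cap\{T_\delta>t\}$ becomes $\{\bar X_t=0\}\cap\{T^{(\lambda)}>\tau_t\}$, and the zero set of $\bar X$ coincides with the closed range of $\sigma$. Because $\sigma$ is (in distribution) a compound Poisson process with unit drift whose jump distribution $\eta$ has no atoms, the subordinator almost surely hits a given level $t$ only by its drift, i.e.\ through the creeping event $\{\sigma_{\tau_t^-}=t\}$. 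Pulling back through $\sigma\egaldistr Y$ and the killing, this becomes $\{Z_{\tau_t^-}=t\}$; dividing by $\mathbb{P}(T^{(\lambda)}>\tau_t)=\mathbb{P}_0(T_\delta>t)$ yields \eqref{eqn:lienZetX}.

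The main subtlety is the almost-sure identification $\{\bar X_t=0\}=\{\sigma_{\tau_t^-}=t\}$. It rests on two ingredients: the standard fact that the zero set of $\bar X$ is the closed range of its inverse local time, and the observation that $\sigma$ creeps over any fixed level almost surely. The latter reduces to checking that the excursion-length law $\eta$ has no atom: since excursions of an irreducible continuous-time Markov chain on a discrete state space are built from independent exponential holding times, their total length is atomless, so the probability that a jump of $\sigma$ lands exactly on $t$ vanishes.
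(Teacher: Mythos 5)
Your proof follows essentially the same route as the paper: you use the excursion-based coupling of $X$ with $(\bar{X}, T^{(\lambda)})$ to transport the problem to the inverse local time of $\bar{X}$, then invoke \eqref{eqn:def Y} and \eqref{def:localtimeZero} to get the law identity between $(Z_s)_{s\geq0}$ and the killed inverse local time. Where the paper treats \eqref{eqn:lienZetX} as an immediate consequence of the law identity, you usefully spell out the two identifications $\{T_\delta>t\}=\{T^{(\lambda)}>\tau_t\}$ and $\{\bar{X}_t=0\}=\{\sigma_{\tau_t^-}=t\}$ (the latter holding almost surely, via the atomlessness of the excursion-length law $\eta$ and of the jump/departure times, which rules out jumps of $\sigma$ landing exactly on $t$); this fills in a step the paper leaves implicit.
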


\begin{proof}
	Since $\mathcal{A}$ is verified, Markov property and the memorylessness property of $T^{(\lambda)}$ give for all $t\geq0$ \[ \mathbb{P}_0(T_\delta < t) = \mathbb{P}_0(L_t > T^{(\lambda)}), \] where $(L_t)_{t\geq0}$ is the local time at $0$ of $\bar{X}$. So by $\eqref{eqn:def Y}$ and $\eqref{def:localtimeZero}$ we have \[ \mathbb{P}_0(T_\delta<t) = \mathbb{P}(Y_{T^{(\lambda)}} < t) = \mathbb{P}(Z_{T^{(\lambda)_-}} < t), \] so \[ Z_{T^{(\lambda)_-}} \overset{(d)}{=} T_\delta. \]
	Furthermore, $(X_t)_{0\leq t < T_\delta} \egaldistr (\bar{X}_s)_{0\leq s \leq \sigma_{T^{(\lambda)}}}$, it automatically implies that $(\sigma_s)_{0\leq s \leq T^{(\lambda)}}$ has the same law as the inverse local time at $0$ of $(X_t)_{0\leq t < T_\delta}$ before death and therefore so does $(Z_s)_{0\leq s \leq T^{(\lambda)}}$.
\end{proof}

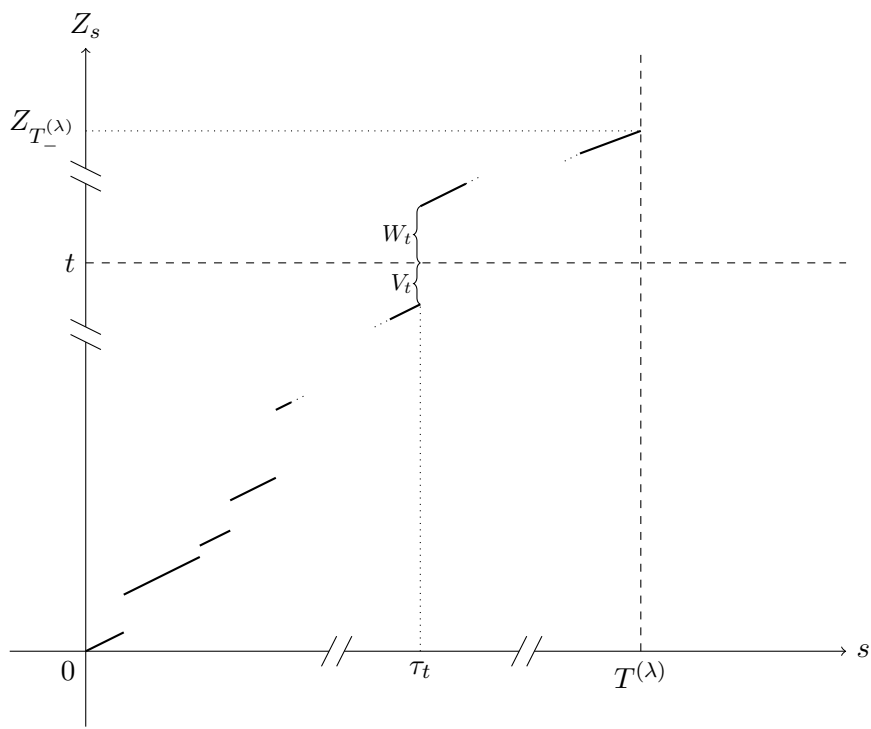
\begin{figure}[h]
	\begin{tikzpicture}[scale=1]
		\draw (0,-1) -- (0,4.1);
		\draw (0.2,4) -- (-0.2,4.2);
		\draw (0.2,4.2) -- (-0.2,4.4);
		\draw (0,4.3) -- (0,6.2);
		\draw (0.2,6.1) -- (-0.2,6.3);
		\draw (0.2,6.3) -- (-0.2,6.5);
		\draw[->] (0,6.4) -- (0,8);
		\draw (-1,0) -- (3.2,0);
		\draw (3.1,-0.2) -- (3.3,0.2);
		\draw (3.3,-0.2) -- (3.5,0.2);
		\draw (3.4,0) -- (5.7,0);
		\draw (5.6,-0.2) -- (5.8,0.2);
		\draw (5.8,-0.2) -- (6,0.2);
		\draw[->] (5.9,0) -- (10,0);
		\draw (10,0) node[right] {$s$};
		\draw (0,8) node[above] {$Z_s$};
		\draw (0,0) node[below left] {$0$};
		\draw [thick] (0,0) -- (0.5,0.25);
		\draw [thick] (0.5,0.75) -- (1.5,1.25);
		\draw [thick] (1.5,1.4) -- (1.9,1.6);
		\draw [thick] (1.9,2) -- (2.5,2.3);
		\draw [thick] (2.5,3.2) -- (2.7,3.3);
		\draw [dotted] (2.7,3.3) -- (2.9,3.4);
		\draw [thick] (4,4.4) -- (4.4,4.6);
		\draw [dotted] (3.8,4.3)-- (4,4.4);
		\draw [thick] (4.4,5.9) -- (5,6.2);
		\draw [dotted] (5,6.2) -- (5.2,6.3);
		\draw (0,5.15) node[left] {$t$};
		\draw [dashed] (0,5.15) -- (10,5.15);
		\draw [dotted] (4.4,0) -- (4.4,4.6);
		\draw (4.4,0) node[below] {$\tau_t$};
		\draw[decorate, decoration={brace,raise=0.05cm}] (4.45,4.6) -- (4.45,5.15) node[left=0.05cm,pos=0.5,scale=0.8] {$V_t$};
		\draw[decorate, decoration={brace,raise=0.05cm}] (4.45,5.15) -- (4.45,5.9) node[left=0.05cm,pos=0.5,scale=0.8] {$W_t$};
		\draw [dotted] (6.3,6.5) -- (6.5,6.6);
		\draw [thick] (6.5,6.6) -- (7.3,6.9);
		\draw [dashed] (7.3,0) -- (7.3,8);
		\draw (7.3,0) node[below] {$T^{(\lambda)}$};
		\draw [dotted] (0,6.9) -- (7.3,6.9);
		\draw (0,6.9) node[left] {$Z_{T^{(\lambda)}_-}$};
	\end{tikzpicture}
	\caption{Example of trajectory for $(Z_s)_{s\geq0}$. The jumps of $(Z_s)_{s\geq0}$ corresponds to the excursions of $(X_t)_{t\geq0}$.}
\end{figure}

Since $(Z_s)_{s\geq0}$ is a killed subordinator, the equality $ Z_{T^{(\lambda)_-}} \overset{(d)}{=} T_\delta $ implies directly that the exponential rate of survival $\gamma^*$ of the process $(X_t)_{t\geq0}$ defined in $\eqref{eqn:conditionexpokilling}$ is the point of explosion $\theta^*$ of the Laplace transform of $Z_{T^{(\lambda)_-}}$ whose behaviour is described in Lemma \ref{prop:explosionLaplaceZT}. In particular, we explicit the following characterization $\theta^*$ through the Laplace transform $\Psi_\eta$ of the distribution $\eta$ and $ \theta^{(\eta)}_+$ its point of explosion.

\begin{corollary}\label{cor:caractheta*}
	Assume $\lambda < \theta^{(\eta)}_+ - \bar{q}_{0,0} ( \Psi_\eta(\theta^{(\eta)}_+) - 1 )$, then $\theta^*$ is the unique solution of 
	\begin{equation}\label{eqn:charactheta*}
		\theta^* - \bar{q}_{0,0} (\Psi_\eta(\theta^*) - 1) = \lambda.
	\end{equation}
\end{corollary}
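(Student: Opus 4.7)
The plan is to read off the Laplace exponent of the subordinator $(Y_s)_{s\geq 0}$ constructed in $\eqref{eqn:def Y}$ and then apply Lemma \ref{prop:explosionLaplaceZT} directly, since $(Z_s)_{s\geq 0}$ is the killed version of $(Y_s)_{s\geq 0}$ by the independent exponential clock $T^{(\lambda)}$.

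First I would recall that $(Y_s)_{s\geq 0}$ is a compound Poisson process with drift $1$, whose Poisson jump times have intensity $-\bar{q}_{0,0}$ and whose jump sizes are i.i.d.\ with common law $\eta$. By (the obvious extension with drift of) Lemma \ref{lem:psipourpoisson}, its Laplace exponent is
\begin{equation*}
\psi(\theta) \;=\; \theta \,-\, \bar{q}_{0,0}\int_{0}^{\infty}(e^{\theta x}-1)\,\eta(\dd x) \;=\; \theta \,-\, \bar{q}_{0,0}\bigl(\Psi_\eta(\theta)-1\bigr),
\end{equation*}
for every $\theta\in\mathbb{R}$ where this is finite. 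Since the drift term is entire, the point of explosion $\theta_+$ of $\psi$ coincides with the point of explosion $\theta^{(\eta)}_+$ of $\Psi_\eta$, and by monotone convergence $\psi(\theta_+) = \theta^{(\eta)}_+ - \bar{q}_{0,0}\bigl(\Psi_\eta(\theta^{(\eta)}_+)-1\bigr)\in (0,\infty]$.

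Next I would invoke Lemma \ref{prop:explosionLaplaceZT} applied to the killed subordinator $(Z_s)_{s\geq 0}$. The hypothesis of that lemma, namely $\psi(\theta_+)>\lambda$, is precisely the assumption of the corollary. The conclusion is that $\theta^*$ is the unique element of $(0,\theta_+)$ satisfying $\psi(\theta^*)=\lambda$, which with the expression above is exactly equation $\eqref{eqn:charactheta*}$.

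There is essentially no obstacle: the whole argument is a bookkeeping step that translates the general statement of Lemma \ref{prop:explosionLaplaceZT} into the specific Laplace exponent attached to the excursion decomposition. The only minor point to be careful about is the sign convention for $\bar{q}_{0,0}$ (which is negative, so $-\bar{q}_{0,0}>0$ is the Poisson intensity appearing in Lemma \ref{lem:psipourpoisson}), and the fact that the drift does not shift $\theta_+$; both are immediate from the construction.
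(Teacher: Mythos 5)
Your proof is correct and follows the same route as the paper: compute $\psi$ for the drift--plus--compound--Poisson process $(Y_s)_{s\ge 0}$ via Lemma \ref{lem:psipourpoisson}, observe that $\theta_+=\theta^{(\eta)}_+$, and then read off the conclusion from Lemma \ref{prop:explosionLaplaceZT}. You are slightly more explicit than the paper in unpacking that the corollary's hypothesis is exactly $\psi(\theta_+)>\lambda$, which is a welcome clarification rather than a deviation.
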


\begin{proof}
	We compute $\psi$ the Laplace exponent of $(Y_s)_{s\geq0}$ to apply \ref{prop:explosionLaplaceZT}. We have for all $\theta>0$ \[ \mathbb{E}[e^{\theta Y_1}] = e^{\theta} \mathbb{E}[e^{\theta\sum_{j=0}^{N_1} \xi_j}]. \] So the point of explosion of $\psi$ is $\theta^{(\eta)}_+$ and applying $\eqref{eqn:laplacecompoundpoisson}$ to the compound Poisson process $(\sum_{j=0}^{N_s} \xi_j)_{s\geq0}$ we get 
	\begin{equation}\label{eqn:psideY}
		\psi(\theta) = \theta - \bar{q}_{0,0}(\Psi_\eta(\theta) - 1),
	\end{equation}
	such that $\eqref{eqn:comportementtheta*}$ directly gives the result.
\end{proof}

Note that $\eqref{eqn:psideY}$ implies that $\theta_+:=\sup\{\theta>0, \psi(\theta)<\infty\}=\theta^{(\eta)}_+$ which then gives
\begin{equation}\label{eqn:valuelambdac}
	\lambda_c = \psi(\theta_+) = \theta^{(\eta)}_+ - \bar{q}_{0,0} ( \Psi_\eta(\theta^{(\eta)}_+) - 1 ). 
\end{equation}
We state our main theorem on the computation of the minimal QSD of $(X_t)_{t\geq0}$ through its excursions.

\begin{theorem}\label{thm:lethm}
	Assume $\gamma^*>0$. Then for all $\lambda < \lambda_c$ the process $(X_t)_{t\geq0}$ admits a minimal quasi-stationary distribution $\nu^*$ given by:
	\begin{equation}\label{eqn:qsdpour0}
		\nu^*(0) = \frac{\theta^*}{\lambda};
	\end{equation}
	and for any $i\in\mathbb{N^*}$ 
	\begin{equation}\label{eqn:qsdpouri}
		\nu^*(i) = -\bar{q}_{0,0}\frac{\theta^*}{\lambda} \int_{0}^{\infty} \int_{0}^{\infty} \mathbb{P}(e^{v+w}(v)=i) e^{\theta^* v} \eta(\mathrm{d}w+v) \mathrm{d}v.
	\end{equation}
\end{theorem}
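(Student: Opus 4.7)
The plan is to reduce via Lemma \ref{lem:yaglomindeinitial} to computing $\lim_{t\to\infty}\mathbb{P}_0(X_t=i\mid t<T_\delta)$ for every $i\in\mathbb{N}$, splitting the analysis between $i=0$ and $i\ge 1$, and to transfer both computations to the killed subordinator $(Z_s)_{s\ge 0}$ via the identity $(X_t)_{0\le t<T_\delta}\egaldistr(\bar X_s)_{0\le s\le\sigma_{T^{(\lambda)}}}$.

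For $i=0$, Lemma \ref{lem:lienentreZetX} gives $\mathbb{P}_0(X_t=0\mid T_\delta>t)=\mathbb{P}(Z_{\tau_t^-}=t\mid \tau_t<T^{(\lambda)})$. Because the subordinator $(Y_s)_{s\ge 0}$ has drift $d=1$, the event $\{Z_{\tau_t^-}=t\}$ coincides with the creeping event $\{Z_{\tau_t}=t\}$, and $\{\tau_t<T^{(\lambda)}\}=\{Z_{T^-}>t\}$; equation \eqref{overshoot_Z} therefore yields $\nu^*(0)=d\theta^*/\lambda=\theta^*/\lambda$, which is \eqref{eqn:qsdpour0}.

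For $i\ge 1$, the event $\{X_t=i,T_\delta>t\}$ forces $t$ to lie strictly inside some excursion of $\bar X$, which in the subordinator picture corresponds to $(Y_s)$ jumping across $t$: one identifies $Y_{\tau_t^-}=g^{(t)}$ and $Y_{\tau_t}=d^{(t)}$, so that $V_t=t-g^{(t)}$ is the time elapsed inside the straddling excursion while $V_t+W_t$ is its length. Conditioning on $(V_t,W_t)$ in Proposition \ref{prop:excursion} then produces
\begin{equation*}
	\mathbb{P}_0(X_t=i,T_\delta>t)=\mathbb{E}\bigl[\mathbb{P}(e^{V_t+W_t}(V_t)=i)\,\mathbf{1}_{\tau_t<T^{(\lambda)}}\bigr].
\end{equation*}
Dividing by $\mathbb{P}(T_\delta>t)=\mathbb{P}(Z_{T^-}>t)$ turns the right-hand side into an expectation against the conditional law of $(V_t,W_t)$ given survival above $t$, which by Theorem \ref{thm:asympUnderOver} converges weakly to $(V_\infty,W_\infty)$ with distribution \eqref{eqn:loiVWkilled}. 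The atom at $(0,0)$ is irrelevant when $i\ne 0$ (no excursion then straddles $t$), and substituting $\Pi(\dd x)=-\bar q_{0,0}\eta(\dd x)$ via \eqref{eqn:Piaveceta} yields exactly \eqref{eqn:qsdpouri}. A direct computation using Corollary \ref{cor:caractheta*} confirms that $(\nu^*(i))_{i\in\mathbb{N}}$ sums to $1$, so that $\nu^*$ is a probability measure; combined with Lemma \ref{lem:yaglomindeinitial} and the Vere-Jones criterion, $\nu^*$ is therefore a QSD, and its associated absorption rate $\lambda\,\nu^*(0)=\theta^*=\gamma^*$ identifies it as the minimal one.

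The delicate step in this scheme is the passage to the limit inside the expectation: the integrand $(v,w)\mapsto\mathbb{P}(e^{v+w}(v)=i)$ is bounded by $1$ but not a priori continuous on all of $\mathbb{R}_+^2$, so the weak convergence of Theorem \ref{thm:asympUnderOver} does not directly yield convergence of expectations. I would handle this by arguing continuity of the integrand on the open quadrant $(0,\infty)^2$ from the continuity of the excursion-bridge law in its length, together with the fact that $(V_\infty,W_\infty)$ places no mass on $\partial(0,\infty)^2\setminus\{(0,0)\}$, so that only the atom at $(0,0)$ could cause trouble and it contributes nothing when $i\ne 0$; a safer fallback is to upgrade weak convergence to convergence of densities by revisiting the integro-local estimate of Proposition \ref{prop:integrolocalkilled} and closing with dominated convergence.
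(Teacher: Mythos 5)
Your proposal follows essentially the same route as the paper: reduce to the $\mathbb{P}_0$-limit via Lemma~\ref{lem:yaglomindeinitial}, use Lemma~\ref{lem:lienentreZetX} and \eqref{overshoot_Z} for $i=0$, and for $i\ge 1$ combine Proposition~\ref{prop:excursion}, the identification $(t-g^{(t)},d^{(t)}-t)\egaldistr(V_t,W_t)$, Theorem~\ref{thm:asympUnderOver}, and \eqref{eqn:Piaveceta}. You are in fact a bit more careful than the paper in two places—the identity $\{Z_{\tau_t^-}=t\}=\{Z_{\tau_t}=t\}$ coming from drift $1$, and the point that weak convergence of $(V_t,W_t)$ does not automatically give convergence of $\mathbb{E}[\mathbb{P}(e^{V_t+W_t}(V_t)=i)]$ unless the map $(v,w)\mapsto\mathbb{P}(e^{v+w}(v)=i)$ is continuous $\mathbb{P}_{(V_\infty,W_\infty)}$-a.e.; the paper applies Theorem~\ref{thm:asympUnderOver} without addressing this, so your flagged continuity argument (or the densities fallback) is a genuine improvement rather than a divergence in method.
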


\begin{proof}
	Recall that the minimal QSD $\nu^*$ is the asymptotic law of $(X_t)_{t\geq0}$ conditionally on survival and Lemma \ref{lem:yaglomindeinitial} ensures that we only have to compute the limit of $$\nu^*(i)=\underset{t\rightarrow\infty}{\lim}\mathbb{P}_0(X_t=i|T_\delta>t), \mbox{ for any } i\in \mathbb{N}.$$ Let us start with the case $i=0$. By Lemma \ref{lem:lienentreZetX}, for any $t>0$, \[ \mathbb{P}_0(X_t=0)=\mathbb{P}(Z_{\tau_t^-} = t |T^{(\lambda)} > \tau_t). \]
	Thus, a direct application of $\eqref{overshoot_Z}$ gives 
	\begin{equation*}
		\nu^*(0) = \underset{t\rightarrow\infty}{\lim}\mathbb{P} (Z_{\tau_t^-} = t |T^{(\lambda)} > \tau_t) = \frac{\theta^*}{\lambda}.
	\end{equation*} 

	Let $i>0$. Fix a time $t>0$. For $X_t$ to be at state $i$, the process $(X_s)_{s\geq0}$ must be at time $t$ in an excursion out of state $0$, therefore, recalling we defined $e^{(t)}$ the excursion straddling $t$, $$ \mathbb{P}_0(X_t = i| t< T_\delta) = \mathbb{P}_0(e^{(t)} (t-g^{(t)}) = i | t<T_\delta ).$$ By Proposition \ref{prop:excursion}, we have $$\mathbb{P}_0(X_t = i| t< T_\delta) = \mathbb{P} (e^{d^{(t)}-g^{(t)}} (t - g^{(t)}) = i | t < T_\delta).$$ Finally, let us link the times $g^{(t)}$ and $d^{(t)}$ to the inverse of the local time at $0$ of $(Z_s)_{s\geq0}$. By definition, we have $$d^{(t)} - g^{(t)} \overset{(d)}{=} V_t + W_t, $$ and $$ t - g^{(t)} \overset{(d)}{=} V_t, $$ where $V_t$ and $W_t$ are the undershoot and overshoot of $(Z_s)_{s\geq0}$ at level $t$. This gives us, with an application of Theorem $\ref{thm:asympUnderOver}$,
	\begin{equation*}
		\begin{split}
			\nu^*(i) &= \underset{t\rightarrow\infty}{\lim}\mathbb{P}_0(X_t=i|T_\delta>t), \\
			&=\underset{t\rightarrow\infty}{\lim}\mathbb{P} (e^{V_t + W_t} (V_t)=i | Z_{T^-} > t)=\mathbb{P} (e^{V_\infty+W_\infty} (V_\infty) = i).
 		\end{split}
	\end{equation*}
	We can integrate over the values of $(V_\infty,W_\infty)$ according to their law given by $\eqref{eqn:loiVWkilled}$ to get
	\begin{equation*}
		\begin{split}
			\nu^*(i) &= \int_{0}^{\infty} \mathbb{P}(e^{v+w}(v) = i ) \mathbb{P}_{(V_\infty,W_\infty)}(\mathrm{d}v,\mathrm{d}w), \\ 
			& = \frac{\theta^*}{\lambda} \int_{0}^{\infty} \int_{0}^{\infty} \mathbb{P}(e^{v+w}(v)=i) e^{\theta^* v} \Pi(\mathrm{d}w+v) \mathrm{d}v,
		\end{split}
	\end{equation*}
	so by $\eqref{eqn:Piaveceta}$ the last expression is equal to $\eqref{eqn:qsdpouri}$, which concludes the proof.
\end{proof}

\subsection{Proof of Lemma \ref{lem:yaglomindeinitial} }

In this section we consider an initial state $j\neq0$ and fix $\lambda<\lambda_c$. Let us show that 
\begin{equation}\label{eqn:egalYaglom}
	\lim_{t\to\infty} \mathbb{P}_j(X_t=i|t<T_\delta) = \lim_{t\to\infty} \mathbb{P}_0(X_t=i|t<T_\delta).
\end{equation}
Remark that the existence of the Yaglom limit under $\mathbb{P}_0$ is ensured by the computations done in the proof of Theorem \ref{thm:lethm}.
The proof is organized in two steps. We first compare the tails of the absorption time $T_\delta$ and of $\tau_0:=\inf\{t\geq0, X_t=0\}$. Then we use Markov Property and a dominated convergence argument to prove $\eqref{eqn:egalYaglom}$. 

\begin{lem}\label{lem:lemdelem}
	Let $j\neq0$ and denote $\alpha_0(j):=\sup\{\theta>0, \mathbb{E}_j[e^{\theta \tau_0}]<\infty]\}$. It verifies
	\begin{equation}\label{eqn:alphavstheta}
		\alpha_0(j) > \theta^*.
	\end{equation}
	In particular, we have $\lim_{t\to\infty}\frac{\mathbb{P}_j(\tau_0>t)}{\mathbb{P}_j(T_\delta>t)} =0$.
\end{lem}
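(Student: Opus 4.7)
The strategy is to prove the stronger claim $\alpha_0(j) > \theta^*$ for every $j \neq 0$, from which the ratio limit follows by comparing $\mathbb{P}_j(\tau_0 > t)$ and $\mathbb{P}_j(T_\delta > t)$ against their exponential decay rates.

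The starting point is that an excursion of $\bar{X}$ away from $0$ begins by an immediate jump to a random state $k \neq 0$ with distribution $p_k := q_{0,k}/(-\bar{q}_{0,0})$ and thereafter equals $\tau_0$ under $\mathbb{P}_k$, so
\begin{equation*}
\Psi_\eta(\theta) \;=\; \sum_{k \neq 0} p_k\, \mathbb{E}_k\bigl[e^{\theta \tau_0}\bigr].
\end{equation*}
Because $\lambda < \lambda_c = \psi(\theta_+)$ and $\psi$ is strictly increasing, $\theta^* < \theta_+ = \theta_+^{(\eta)}$. Any $\theta \in (\theta^*, \theta_+^{(\eta)})$ therefore makes $\Psi_\eta(\theta)$ finite, hence $\mathbb{E}_k[e^{\theta \tau_0}] < \infty$ for every $k$ with $p_k > 0$, so $\alpha_0(k) \geq \theta_+^{(\eta)} > \theta^*$ for every one-step neighbour $k$ of $0$ in the jump graph. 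To extend this to a general $j \neq 0$, I would exploit irreducibility of $\bar{X}$ to produce a shortest sequence of positive-rate transitions $0 \to k \to i_2 \to \cdots \to i_{n-1} \to j$; minimality of its length forces all its vertices to be distinct, so in particular its interior lies in $\mathbb{N}^*$. The probability that $\bar{X}$ starting from $k$ realizes exactly the jump sequence $k, i_2, \ldots, i_{n-1}, j$ is strictly positive, so $\mathbb{P}_k(\tau_j < \tau_0) > 0$, and the strong Markov property at $\tau_j$ gives
\begin{equation*}
\mathbb{E}_k\bigl[e^{\theta \tau_0}\mathbf{1}_{\tau_j < \tau_0}\bigr] \;=\; \mathbb{E}_k\bigl[e^{\theta \tau_j}\mathbf{1}_{\tau_j < \tau_0}\bigr]\,\mathbb{E}_j\bigl[e^{\theta \tau_0}\bigr].
\end{equation*}
The left-hand side is bounded by $\mathbb{E}_k[e^{\theta \tau_0}] < \infty$ and the first factor on the right is at least $\mathbb{P}_k(\tau_j < \tau_0) > 0$, so $\mathbb{E}_j[e^{\theta \tau_0}]$ is finite; hence $\alpha_0(j) \geq \theta_+^{(\eta)} > \theta^*$.

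For the concluding ratio, I would fix $\theta \in (\theta^*, \alpha_0(j))$. Markov's inequality gives $\mathbb{P}_j(\tau_0 > t) \leq e^{-\theta t}\mathbb{E}_j[e^{\theta \tau_0}]$. A matching lower bound on $\mathbb{P}_j(T_\delta > t)$ follows from the strong Markov property at $\tau_0$ combined with the trivial bound $\mathbb{P}_0(T_\delta > t - \tau_0) \geq \mathbb{P}_0(T_\delta > t)$: restricting to $\{\tau_0 \leq 1\}$ gives $\mathbb{P}_j(T_\delta > t) \geq \mathbb{P}_j(\tau_0 \leq 1)\,\mathbb{P}_0(T_\delta > t)$, with $\mathbb{P}_j(\tau_0 \leq 1) > 0$ by irreducibility. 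Lemma~\ref{lem:lienentreZetX} and Theorem~\ref{thm_GD} then yield $\mathbb{P}_0(T_\delta > t) \sim \tfrac{\lambda u^*}{\theta^*}e^{-\theta^* t}$, so the ratio is $O(e^{-(\theta-\theta^*)t}) \to 0$. The step I expect to be most delicate is the path construction above: irreducibility of $\bar{X}$ on all of $\mathbb{N}$ a priori permits trajectories from $0$ to $j$ that swing back through $0$, and one must invoke the minimal-length argument in the jump graph to guarantee a $0$-avoiding path.
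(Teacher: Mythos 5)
Your argument is correct, and it takes a more explicit route than the paper. The paper's proof is a two-liner: it asserts $\mathbb{P}_j(\tau_0>t) < \eta((t,+\infty))$ ``by the Markov property and the definition of the excursion length'' and then concludes $\alpha_0(j)\geq\theta^{(\eta)}_+>\theta^*$ from $\lambda<\lambda_c$. The idea behind that one-liner is that an excursion of $\bar{X}$ away from $0$ visits $j$ with some positive probability $c_j$, and conditionally on that visit the remaining excursion duration is a fresh copy of $\tau_0$ under $\mathbb{P}_j$, hence $\eta((t,\infty))\geq c_j\,\mathbb{P}_j(\tau_0>t)$ for all $t$. Taken literally the paper's strict inequality with no constant is too strong --- $\eta$ is the mixture $\sum_k p_k\,\mathbb{P}_k(\tau_0\in\cdot)$ over first-jump targets, and a mixture need not dominate each of its components pointwise --- but the harmless constant $c_j^{-1}$ does not move the abscissa of convergence, so the conclusion $\alpha_0(j)\geq\theta^{(\eta)}_+$ is unaffected. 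Your route instead first extracts $\alpha_0(k)\geq\theta^{(\eta)}_+$ for the one-step neighbours $k$ of $0$ from the identity $\Psi_\eta(\theta)=\sum_k p_k\,\mathbb{E}_k[e^{\theta\tau_0}]$, and then propagates to a general $j$ by the strong Markov property along a shortest $0$-avoiding path; this is longer, but it makes the irreducibility step fully explicit and avoids any pointwise-domination claim. Finally, the paper leaves the concluding ratio limit implicit; your Chernov bound above paired with the lower bound $\mathbb{P}_j(T_\delta>t)\geq\mathbb{P}_j(\tau_0\leq1)\,\mathbb{P}_0(T_\delta>t)$ and the asymptotics of Theorem~\ref{thm_GD} is exactly the right way to make it precise, and closely mirrors the estimate that the paper itself uses inside the proof of Lemma~\ref{lem:yaglomindeinitial}.
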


\begin{proof}
	By Markov property and the definition of the excursion length of $(X_t)_{t\geq0}$ above $0$, we have \[\mathbb{P}_j(\tau_0>t) < \eta((t,+\infty)).\] Therefore, since $\lambda<\lambda_c$ we get $\alpha_0(j)\geq\theta^{(\eta)}>\theta^*$. 
\end{proof}

We can now prove $\eqref{eqn:egalYaglom}$.

\begin{proof}[Proof of Lemma \ref{lem:yaglomindeinitial}]
	Denote $\tau_0:=\inf\{t\geq0, X_t=0\}$, we have for any $i\in\mathbb{N}$
	\begin{equation}\label{eqn:decomplem} 
		\mathbb{P}_j(X_t=i|t<T_\delta) = \mathbb{P}_j(X_t=i,t\geq\tau_0|t<T_\delta) + \mathbb{P}_j(X_t=i,t\leq\tau_0|t<T_\delta). 
	\end{equation}
	On the one hand, for $t>0$ and $i\in\mathbb{N}$ we have
	\begin{equation}\label{eqn:etape}
		\mathbb{P}_j(X_t=i,t<\tau_0|t<T_\delta) = \frac{\mathbb{P}_j(X_t=i, t<\tau_0)}{\mathbb{P}_j(t<T_\delta)} \leq \frac{\mathbb{P}_j(t<\tau_0)}{\mathbb{P}_j(t<T_\delta)},
	\end{equation}
	which goes to $0$ by Lemma \ref{lem:lemdelem}. \\
	On the other hand, using the Markov property, we have
	\begin{equation*}
		\begin{split}
			\mathbb{P}_j(X_t=i, t\geq\tau_0, t<T_\delta) &= \mathbb{E}_j\big[\mathbb{E}_j[\mathbf{1}_{\{t\geq\tau_0\}} \mathbf{1}_{\{X_t=i,t<T_\delta\}} | \mathcal{F}_{\tau_0}]\big], \\
			&= \mathbb{E}_j\big[\mathbf{1}_{\{t\geq\tau_0\}} \mathbb{P}_0(X_{t-s} = i | T_\delta>t-s)\big|_{\tau_0=s}\mathbb{P}_0(T_\delta>t-s)\big|_{\tau_0=s}\big].
		\end{split}
	\end{equation*}
	It follows that \[ \mathbb{P}_j(X_t=i,t\geq\tau_0|T_\delta>t) = \mathbb{E}_j\big[\mathbf{1}_{\{t\geq\tau_0\}} \mathbb{P}_0(X_{t-s} = i | T_\delta>t-s)\big|_{\tau_0=s}\frac{\mathbb{P}_0(T_\delta>t-s)}{\mathbb{P}_j(T_\delta>t)}\big|_{\tau_0=s}\big]. \] But recall that $T_\delta \overset{(d)}{=} Z_{T^{(\lambda)_-}}$ so by Theorem \ref{thm_GD} we know that \[ \mathbb{P}_j(T_\delta>t) \underset{t\to\infty}{\sim} c e^{-\theta^* t}, \] so there exists $c_1>0$ such that $\mathbb{P}_j(T_\delta>t)\geq c_1 e^{-\theta^* t}$ for $t$ large enough. Moreover, by Chernov inequality we know that there is $c_2>0$ such that for any $0<s<t$, $\mathbb{P}_0(T_\delta>t-s) \leq c_2 e^{-\theta^*(t-s)}$. Therefore, we get that there exists $c^*>0$ such that for all $t>s$ large enough \[ \frac{\mathbb{P}_0(T_\delta>t-s)}{\mathbb{P}_j(T_\delta>t)} \leq c^* e^{\theta^* s}. \] Finally, by Lemma \ref{lem:lemdelem} we have $\mathbb{E}_j[e^{\theta^* \tau_0}] <\infty$, so we compute by dominated convergence and using the already established convergence under $\mathbb{P}_0$: 
	\begin{equation}
		\begin{split}
			\underset{t\to\infty}{\lim} \mathbb{P}_j(X_t=i,t\geq\tau_0&|T_\delta>t) \\ &= \mathbb{E}_j\big[\underset{t\to\infty}{\lim} \mathbf{1}_{\{t\geq\tau_0\}} \mathbb{P}_0(X_{t-s} = i | T_\delta>t-s)\big|_{\tau_0=s}\frac{\mathbb{P}_0(T_\delta>t-s)}{\mathbb{P}_j(T_\delta>t)}\big|_{\tau_0=s} \big], \\
			&= \mathbb{E}_j\big[\underset{t\to\infty}{\lim} \mathbb{P}_0(X_{t-s} = i | T_\delta>t-s)\big|_{\tau_0=s} \big] = \nu^*(i),
		\end{split}
	\end{equation}
	which concludes the proof.
\end{proof}

\subsection{Example of a finite cycle}

We demonstrate the use of our method with a simple example, namely a process on a finite set where the discretized trajectories are deterministic but the times of jumps are random. More precisely, we consider the process $(X_t)_{t\geq0}$ on $\{0,1,\dots,n\}\cup\{\delta\}$ defined as follows: the transition rate from $i$ to $i+1$ for any integer $0\leq i \leq n-1$ and the one from $n$ to $0$ are $1$, and when in state $0$, the process can also jumps to the death-state $\delta$ at rate $\lambda>0$. This example is simple in the sense that it contains no new results on the existence of QSD, since the existence of a unique QSD is given by the finiteness of the state space, and that the formula we get in the end can be obtained with other methods. Still it is convenient to illustrate our method since all computations on the excursions and Laplace transforms can be done explicitly, in a way that we believe easier than direct computations using the generating function.

Let us state the explicit formula for the QSD of this process.

\begin{proposition}\label{prop:qsdcycle}
	The chain $(X_t)_{t\geq0}$ admits a unique quasi-stationary distribution $\nu^*$ given by
	\begin{equation}\label{eqn:qsdcycle}
		\nu^*(i) = \frac{\theta^*}{\lambda (1-\theta^*)^n}, \text{ for } i=0,1,\dots,n,
	\end{equation}
	where the exponential rate of survival $\theta^*$ is the unique solution in $(0,\infty)$ of 
	\begin{equation}\label{eqn:thetacycle}
		\theta^* + \frac{1}{(1-\theta^*)^n} = \lambda + 1.
	\end{equation}
\end{proposition}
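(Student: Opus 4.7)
The strategy is to unpack Theorem \ref{thm:lethm} for this particular chain, in which every ingredient of the abstract theorem admits a closed form. My first task is to identify the excursion law $\eta$ of the non-killed chain out of state $0$. Because the jump pattern out of $0$ is deterministic ($0 \to 1 \to 2 \to \cdots \to n \to 0$) and each holding time is an independent Exp$(1)$, the excursion length $L(e)$ is the sum of $n$ i.i.d.\ Exp$(1)$ random variables; that is, $\eta$ is the Gamma$(n,1)$ law. In particular
\[
\Psi_\eta(\theta) = \frac{1}{(1-\theta)^n}, \qquad \theta_+^{(\eta)} = 1,
\]
and $\bar{q}_{0,0} = -1$ (the rate of leaving $0$ in the non-killed chain).

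The next step is $\eqref{eqn:thetacycle}$. Plugging $\Psi_\eta$ and $\bar{q}_{0,0}$ into Corollary \ref{cor:caractheta*}, the equation $\psi(\theta^*) = \lambda$ becomes
\[
\theta^* + \frac{1}{(1-\theta^*)^n} - 1 = \lambda,
\]
which is $\eqref{eqn:thetacycle}$, and uniqueness of the solution in $(0,1)$ follows from the strict convexity of $\psi$ on this interval. The hypothesis $\lambda < \lambda_c$ of Theorem \ref{thm:lethm} holds automatically here since $\psi(\theta) \to +\infty$ as $\theta \to 1^-$. The formula $\nu^*(0) = \theta^*/\lambda$ is then immediate from $\eqref{eqn:qsdpour0}$.

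For $i \in \{1,\ldots,n\}$, I would apply $\eqref{eqn:qsdpouri}$, so the main point is to evaluate $\mathbb{P}(e^\ell(v) = i)$. Conditional on the excursion having length $\ell$, the $n-1$ internal jump instants are distributed as the order statistics of $n-1$ i.i.d.\ Uniform$[0,\ell]$ variables, and the event $\{e^\ell(v) = i\}$ is exactly the event that $i-1$ of these order statistics lie in $[0,v]$; hence
\[
\mathbb{P}(e^\ell(v) = i) = \binom{n-1}{i-1}\Bigl(\frac{v}{\ell}\Bigr)^{i-1}\Bigl(1-\frac{v}{\ell}\Bigr)^{n-i}, \qquad 0 < v < \ell.
\]
Substituting this together with $\eta(\mathrm{d}x) = \frac{x^{n-1}}{(n-1)!}e^{-x}\mathrm{d}x$ into $\eqref{eqn:qsdpouri}$, setting $x = v+w$ (so $\eta(\mathrm{d}w+v) = \eta(\mathrm{d}x)$ with $x \in (v,\infty)$), and applying Fubini reduces everything to two elementary Gamma integrals: the inner one $\int_v^\infty (x-v)^{n-i}e^{-x}\mathrm{d}x = (n-i)!\,e^{-v}$ (via $y = x-v$), after which the factor $x^{n-1}$ has cancelled, and the outer one $\int_0^\infty v^{i-1}e^{-(1-\theta^*)v}\mathrm{d}v = (i-1)!/(1-\theta^*)^i$. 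The combinatorial prefactor $\binom{n-1}{i-1}/(n-1)!$ multiplied by $(n-i)!$ telescopes to $1/(i-1)!$, which cancels the $(i-1)!$ from the second integral, leaving the formula announced in $\eqref{eqn:qsdcycle}$. Uniqueness of the QSD is inherited from the Darroch--Seneta theorem for finite state spaces (cited in the introduction), so the minimal QSD computed here is the unique one. The only step that requires real care is the double integral: the substitutions and change in order of integration must be tracked carefully, and one should check the final weights sum to one using $\eqref{eqn:thetacycle}$ as a sanity test.
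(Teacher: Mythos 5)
Your proof is correct and follows essentially the same route as the paper's: identify $\eta$ as $\Gamma(n,1)$, obtain the conditional excursion law from the order statistics of uniforms, extract $\theta^*$ from Corollary \ref{cor:caractheta*}, and evaluate the double integral in $\eqref{eqn:qsdpouri}$ via two elementary Gamma integrals. One caution: your computation correctly yields $\nu^*(i) = \theta^*/(\lambda(1-\theta^*)^i)$ (as does the paper's own proof), so the exponent in the displayed formula $\eqref{eqn:qsdcycle}$ should be $i$ rather than $n$ --- a misprint in the statement that the normalisation check you propose, $\sum_{i=0}^{n}\nu^*(i)=1$ via $\eqref{eqn:thetacycle}$, would in fact have detected.
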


The proof is a direct application of Theorem \ref{thm:lethm} but it requires some computations and constructions on the excursions of the process that we give in the following lemma.

\begin{lem}\label{lem:excursionscycle}
	Let $e$ be the canonical excursion process of $(X_t)_{t\geq0}$. Then, we have the followings: 
	\begin{enumerate}
		\item The length $L(e)$ of an excursion follows a $\Gamma(n,1)$ distribution.
		\item Let $L>0$, then conditionally on $\{L(e)=L\}$ the law of the excursion $e$ is given by
		\begin{equation}\label{eqn:loiexcursioncycle}
			\mathbb{P}(e^L(t) = i) = \binom{n-1}{i-1} \left(\frac{t}{L}\right)^{i-1} \left(1-\frac{t}{L}\right)^{n-i}; \mbox{ } \forall t \in(0,L), i\in\{1,\dots,n\}.
		\end{equation}
	\end{enumerate}
\end{lem}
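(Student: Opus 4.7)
The plan is to exploit the very rigid skeleton of the cycle process: once $(\bar X_t)_{t\geq 0}$ leaves $0$, the only allowed transitions are $1\to 2\to\cdots\to n\to 0$, so each excursion consists of exactly $n$ consecutive visits to the states $1,2,\dots,n$, and the only randomness lies in the sojourn times. Since each sojourn time is exponential with parameter $1$ (as all jump rates equal $1$ and there is no alternative destination from $i\in\{1,\dots,n-1\}$), I can write $L(e)=E_1+\cdots+E_n$ where $E_1,\dots,E_n$ are i.i.d.\ $\mathrm{Exp}(1)$, which gives statement (1) immediately since the sum of $n$ i.i.d.\ $\mathrm{Exp}(1)$ variables follows a $\Gamma(n,1)$ distribution.

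For statement (2), I would set $S_k=E_1+\cdots+E_k$ for $k=1,\dots,n$, so that the event $\{e(t)=i\}$ is exactly $\{S_{i-1}\leq t<S_i\}$ (with the convention $S_0=0$). Conditioning on $\{L(e)=L\}=\{S_n=L\}$, I would invoke the classical fact that the vector $(S_1,\dots,S_{n-1})$ conditioned on $S_n=L$ is distributed as the order statistics of $n-1$ i.i.d.\ uniform random variables on $(0,L)$; equivalently, $(E_1/L,\dots,E_n/L)$ conditioned on $S_n=L$ is a uniform Dirichlet vector on the simplex. Under this representation, the event $\{S_{i-1}\leq t<S_i\}$ translates into the event that exactly $i-1$ of $n-1$ i.i.d.\ $\mathrm{Uniform}(0,L)$ variables fall in $[0,t]$ and the remaining $n-i$ fall in $(t,L]$, whose probability is the binomial $\binom{n-1}{i-1}(t/L)^{i-1}(1-t/L)^{n-i}$, which is precisely $\eqref{eqn:loiexcursioncycle}$.

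The routine but slightly delicate step is justifying the order-statistics representation of conditioned partial sums; I would briefly derive it from the joint density. Writing $(E_1,\dots,E_n)$ with density $e^{-\sum e_k}\mathbf{1}_{e_k>0}$, the change of variable to $(S_1,\dots,S_{n-1},S_n)$ gives the joint density $e^{-s_n}\mathbf{1}_{0<s_1<\cdots<s_n}$, so the conditional density of $(S_1,\dots,S_{n-1})$ given $S_n=L$ is constant equal to $(n-1)!/L^{n-1}$ on the simplex $\{0<s_1<\cdots<s_{n-1}<L\}$, which is exactly the law of the order statistics of $n-1$ i.i.d.\ $\mathrm{Uniform}(0,L)$. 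From here the binomial identity is a direct counting in disguise: the probability that a uniform point on $(0,L)$ lies in $[0,t]$ is $t/L$, and the $n-1$ points are independent, so the number falling in $[0,t]$ is $\mathrm{Binomial}(n-1,t/L)$.

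No genuinely hard step arises here; the main thing to get right is the bookkeeping of indices (excursions start at state $1$, not $0$, and the event $\{e^L(t)=i\}$ corresponds to the $(i-1)$-th partial sum being below $t$), so that the binomial parameters come out as $(n-1,i-1)$ rather than shifted by one. Once this indexing is settled, both assertions follow cleanly from the above argument.
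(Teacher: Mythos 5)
Your proof is correct and follows essentially the same strategy as the paper: identify $L(e)$ as a sum of $n$ i.i.d.\ $\mathrm{Exp}(1)$ sojourn times, then recognise that the jump times conditioned on $L(e)=L$ are the order statistics of $n-1$ uniforms on $(0,L)$, so that $\{e^L(t)=i\}$ is a binomial event. The only cosmetic difference is that the paper invokes the uniform-conditional-jump-times property of the underlying Poisson process, whereas you derive the order-statistics fact directly from the joint density of the partial sums of exponentials; both are standard and equivalent.
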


\begin{proof}
	The main observation we make is that, in terms of states visited, the trajectories of the excursions are deterministic: starting from state $0$ the process has to visit all the other states $1,\dots,n$ in increasing order to come back to $0$. Moreover, we know that it stays at state $i\in\{1,\dots,n\}$ for an exponential time of parameter $1$. The results follow from there.
	
	Firstly, the length of an excursion is the sum of the time that, starting from $0$, $(X_t)_{t\geq0}$ spent in each state before coming back to $0$. Therefore, we can write $$L(e)\overset{(d)}{=} \sum_{i=1}^{n} \epsilon_i,$$ where $\epsilon_1,\dots,\epsilon_n$ are i.i.d. random variables distributed as $\mathcal{E}(1)$, which proves statement $(1)$.
	
	Let us now suppose that $\{L(e)=L\}$. The jumping times of $(X_t)_{t\geq0}$ form an homogeneous Poisson process, meaning that if we know that a jump occurs in an interval $(a,b)$, its time of occurrence is uniformly distributed in this interval. Thus, we deduce that the $n$ jumping times in $(0,L]$ are distributed as an ordered vector $(\mathcal{U}_1,\dots,\mathcal{U}_{n-1}, \mathcal{U}_{n}=L)$ of $n-1$ uniform random variables on $(0,L)$. Now to get the position of the process at time $t\in(0,L)$, we only have to remark that, for any $i\in\{1,\dots,n\}$ we have $\{X_t=i\}$ if and only if the process has made exactly $i-1$ jumps since the start of its excursion. So we can write $$\mathbb{P}(e^L(t)=i) = \mathbb{P}(\mathcal{U}_{(i-1)} \leq t < \mathcal{U}_{(i)}),$$ that we rewrite $$\mathbb{P}(e^L(t)=i) = \mathbb{P}(\tilde{\mathcal{U}}_{(i-1)} \leq \frac{t}{L} < \tilde{\mathcal{U}}_{(i)}),$$ where $(\tilde{\mathcal{U}}_{(1)},\dots,\tilde{\mathcal{U}}_{(n-1)}, \tilde{\mathcal{U}}_{(n)}=1)$ is a vector of ordered uniform variables on $(0,1)$. Since the last probability is the same as the probability to get exactly $i-1$ success in $n-1$ independent Bernoulli trials of parameter $\frac{t}{L}$, this gives exactly $\eqref{eqn:loiexcursioncycle}$.
\end{proof}

We now have everything we need to compute the minimal quasi-distribution of the chain $(X_t)_{t\geq0}$. 

\begin{proof}[Proof of Proposition \ref{prop:qsdcycle}]
	Let us start with computing the exponential of survival $\theta^*$. By Corollary \ref{cor:caractheta*} it verifies \[ \theta^* - (q_{0,0} + \lambda )(\Psi_{\eta}(\theta^*)-1) = \lambda, \] where  $ q_{0,0} = -(\lambda + 1)$ is the transition rate of the process in state $0$ and $\Psi_{\eta}$ is the Laplace transform of the length $L(e)$ of the excursions that we can compute using the first point of Lemma \ref{lem:excursionscycle}: \[ \Psi_{\eta}(\theta)= \mathbb{E}[e^{\theta L(e)} ] = \frac{1}{(1-\theta)^n}.\] Therefore, $\theta^*$ is solution of \[ \theta^* - (-1) (\frac{1}{(1-\theta)^n} - 1) = \lambda, \] which is exactly $\eqref{eqn:thetacycle}$.
	We compute $\nu^*(i)$ for $i\in\{0,\dots,n\}$ with $\eqref{eqn:qsdpour0}$ and $\eqref{eqn:qsdpouri}$, it gives \[ \nu^*(0) = \frac{\theta^*}{\lambda}, \] and for $i>0$ \[\nu^*(i) = \frac{\theta^*}{\lambda} \int_{0}^{\infty} \int_{0}^{\infty} \mathbb{P}(e^{v+w}(v)=i) e^{\theta^* v} \eta(\mathrm{d}w+v) \mathrm{d}v, \] where $\eta$ is the law of $L(e)$. We proved that $L(e)$ is distributed as a $\Gamma(n,1)$, therefore $\eta$ admits the density function $f_\eta$ given for any $x\in(0,\infty)$ \[ f_\eta(x) = \frac{x^{n-1} e^{-x}}{(n-1)!}. \] In particular, we can write for any $v\in(0,\infty)$ \[ \eta(\mathrm{d}w + v) = f_\eta (w+v)\mathrm{d}w. \] Therefore, we can compute $\nu^*(i)$ using $\eqref{eqn:loiexcursioncycle}$:
	\begin{equation*}
		\begin{split}
			\nu^*(i) &= \binom{n-1}{i-1} \frac{\theta^*}{\lambda} \int_{0}^{\infty} \int_{0}^{\infty} e^{\theta^* v} (\frac{v}{v+w})^{i-1} (\frac{w}{v+w})^{n-i} 
			\frac{(v+w)^{n-1} e^{v+w}}{(n-1)!} \mathrm{d}w \mathrm{d}v, \\
			&= \frac{\theta^*}{(i-1)!(n-i)!\lambda} \int_{0}^{\infty} e^{-v(1-\theta^*)} v^{i-1}\mathrm{d}v \int_{0}^{\infty} e^{-w} w^{n-i} \mathrm{d}w,\\
			&= \frac{\theta^*}{\lambda (1-\theta^*)^i}. 
		\end{split}
	\end{equation*} 
	where we computed the two integrals by recursive integration by parts.
\end{proof}

\paragraph{Acknowledgements}
The author is indebted to its PhD advisors Bastien Mallein and Laurent Tournier for many discussions and countless answered questions. \\
We acknowledge financial support by the Investissements d'Avenir programme 10-LABX-0017, Sorbonne Paris Cité, Laboratoire d'excellence INFLAMEX.

\appendix

\section{Laplace's method of integration}

Laplace's methods to compute integrals are widely known and used. In this section we recall the classic result of approximation in Theorem \ref{meth_laplace}. Then we prove an extension of this method of approximation in a bit more general setting that corresponds to the one used in our work. To get more details and generalizations of the Laplace's method, the reader can refer to R.Wong's book \cite{WongIntegralsApproximations} on approximations of integrals (see Sections II.1, VIII.10 and IX.5).\\ 

\begin{theorem}\label{meth_laplace}
	Let $a<b\in[-\infty,+\infty]$, and $f$ and $h$ be real functions defined on $[a,b]$. Assume that $f$ is in $\mathcal{C}^2([a,b])$, and that it has a unique maximum on the segment $[a,b]$, denote $x^*$ the point where $f$ is maximal. Moreover, we assume that $h$ is continuous at $x^*$. Then, we have the following integral approximation: 
	\begin{equation}\label{laplace_classic}
		\int_{a}^{b} e^{tf(x)}h(x)\mathrm{d}x \underset{t\rightarrow+\infty}{\sim} \frac{\sqrt{2\pi}}{\sqrt{t}}e^{tf(x^*)}\frac{h(x^*)}{\sqrt{|f^{''}(x^*)|}}.
	\end{equation}
\end{theorem}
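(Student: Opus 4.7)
The plan is to prove $\eqref{laplace_classic}$ by the classical Laplace method: isolate a small neighbourhood of the unique maximiser $x^*$, approximate the integrand there by a Gaussian, and control the remaining tail. I would first multiply through by $e^{-tf(x^*)}$ to reduce to the case $f(x^*)=0$, and may assume $x^*$ is an interior point of $[a,b]$ (the boundary case recovers only half of the Gaussian mass but is handled identically). Since $f\in\mathcal{C}^2$ has a unique maximum at $x^*$, Fermat's lemma gives $f'(x^*)=0$, and I shall work under the natural non-degeneracy hypothesis $f''(x^*)<0$, without which $\eqref{laplace_classic}$ is ill-defined.

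For the local part, fix $\delta>0$ and set $I_1(t):=\int_{x^*-\delta}^{x^*+\delta} e^{tf(x)}h(x)\diff x$. Given $\epsilon>0$, by Taylor's theorem applied to $f$ at $x^*$ and by continuity of $h$ at $x^*$, one can choose $\delta$ small enough that $|f(x)-\tfrac{1}{2}f''(x^*)(x-x^*)^2|\leq\epsilon(x-x^*)^2$ and $|h(x)-h(x^*)|\leq\epsilon$ on $[x^*-\delta,x^*+\delta]$. I would then rescale via $u=(x-x^*)\sqrt{t|f''(x^*)|}$, which turns $I_1(t)$ into
\[
\frac{1}{\sqrt{t|f''(x^*)|}}\int e^{-u^2/2+r(u,t)}\tilde{h}(u,t)\diff u,
\]
where the remainder $r(u,t)$ and the deviation $\tilde{h}(u,t)-h(x^*)$ vanish as $t\to\infty$ uniformly on compact $u$-sets. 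Dominated convergence against the Gaussian density $e^{-u^2/2}$, legitimate because the Taylor correction is $O(\epsilon u^2)$ with $\epsilon$ arbitrarily small, then yields
\[
I_1(t)\underset{t\to\infty}{\sim}\frac{h(x^*)}{\sqrt{t|f''(x^*)|}}\int_{-\infty}^{\infty} e^{-u^2/2}\diff u=\frac{\sqrt{2\pi}\,h(x^*)}{\sqrt{t|f''(x^*)|}},
\]
which is precisely the leading term on the right-hand side of $\eqref{laplace_classic}$.

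For the tail, set $I_2(t):=\int_{[a,b]\setminus[x^*-\delta,x^*+\delta]} e^{tf(x)}h(x)\diff x$. By uniqueness of the maximum and continuity of $f$, there exists $\eta>0$ such that $f(x)\leq f(x^*)-\eta$ outside $[x^*-\delta,x^*+\delta]$. Writing $e^{tf(x)}\leq e^{(t-1)(f(x^*)-\eta)}e^{f(x)}$ gives
\[
|I_2(t)|\leq e^{(t-1)(f(x^*)-\eta)}\int_a^b e^{f(x)}|h(x)|\diff x,
\]
which is $o\bigl(e^{tf(x^*)}/\sqrt{t}\bigr)$ as soon as the latter integral is finite (automatic on a compact segment with $h$ continuous, and required as an implicit hypothesis otherwise). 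Combined with the local estimate for $I_1$, this proves $\eqref{laplace_classic}$.

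The main delicate point is the need for an integrability hypothesis on $e^{f}h$ when $[a,b]$ is non-compact, which is not stated in the theorem but is essential for the tail bound to yield a finite constant; this is the standard caveat of Laplace's method and should be flagged. The remaining difficulty, routine but crucial, is the uniform control of the Taylor remainder in the rescaled variable $u$, which is what legitimises the exchange of limit and integral in the derivation of the Gaussian main term.
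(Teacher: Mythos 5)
Your proposal is sound, and it is worth noting that the paper does not actually give a self-contained proof of Theorem~\ref{meth_laplace}: it cites Wong's book for the classical statement, proves the local variant Proposition~\ref{Une approximation de Laplace} (the integral restricted to $[x^*-A(t),x^*+A(t)]$, with a sandwich by Gaussian bounds after Taylor-expanding $f$ and rescaling $y=\sqrt{t}(x-x^*)$), and then merely remarks that taking $C=+\infty$ recovers $\eqref{laplace_classic}$. Your local analysis is essentially the same computation as the paper's (Taylor control of $f$, continuity of $h$, rescale, Gaussian), packaged as a dominated-convergence argument rather than an explicit two-sided squeeze with $f''(x^*)\pm\delta$; both are legitimate and equivalent in effort. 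Where you genuinely add something is the tail bound for $I_2$, which the paper's remark glosses over entirely: Proposition~\ref{Une approximation de Laplace} only treats the shrinking window, and without your estimate $|I_2(t)|\leq e^{(t-1)(f(x^*)-\eta)}\int_a^b e^{f}|h|$ one cannot conclude $\eqref{laplace_classic}$ for the full integral over $[a,b]$. You also correctly flag two hypotheses the theorem leaves implicit: the non-degeneracy $f''(x^*)<0$ (without which the right-hand side is undefined) and, when $[a,b]$ is unbounded, both the integrability of $e^{f}h$ and the existence of a uniform gap $\sup_{|x-x^*|\geq\delta}f(x)\leq f(x^*)-\eta$, which ``unique maximum'' alone does not guarantee on a non-compact interval. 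These are the standard caveats of Laplace's method and your proof handles them correctly in the compact case and identifies exactly what extra assumptions are needed otherwise.
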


\begin{proposition}\label{Une approximation de Laplace}
	Let $f$ and $h$ be functions on $\mathbb{R}\to\mathbb{R}$. Let $A$ be a  real function on $\mathbb{R}_+$ which converges to $0$ at $+\infty$ but such that $A(t)\sqrt{t}\underset{t\rightarrow+\infty}{\longrightarrow}C\in]0,+\infty]$. Then, with same assumptions on $f$ and $h$ as in Theorem \ref{meth_laplace}, one has the following approximations: 
	\begin{equation}\label{laplace_general}
		\int_{x^*-A(t)}^{x^*+A(t)} e^{tf(x)}h(x)\mathrm{d}x \underset{t\rightarrow+\infty}{\sim} \frac{h(x^*)}{\sqrt{t}}e^{tf(x^*)}\int_{-C}^{+C}e^{-\frac{1}{2}(\sqrt{-f^{''}(x^*)}v)^2}\mathrm{d}v.
	\end{equation}
\end{proposition}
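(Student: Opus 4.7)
The plan is to reduce the claim to a dominated convergence argument after a parabolic rescaling. The appearance of $\sqrt{-f''(x^*)}$ on the right-hand side implicitly requires $f''(x^*)<0$, which is consistent with $x^*$ being a strict interior maximum of a $\mathcal{C}^2$ function, and I work under this assumption.

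First I would make the change of variable $v=\sqrt{t}(x-x^*)$, so that
\[ \int_{x^*-A(t)}^{x^*+A(t)} e^{tf(x)}h(x)\diff x = \frac{e^{tf(x^*)}}{\sqrt{t}} \int_{-A(t)\sqrt{t}}^{A(t)\sqrt{t}} e^{t[f(x^*+v/\sqrt{t})-f(x^*)]}\, h(x^*+v/\sqrt{t})\diff v. \]
Using $f'(x^*)=0$ together with Taylor's theorem with Lagrange remainder, for each $t,v$ there exists $\xi_{t,v}$ between $x^*$ and $x^*+v/\sqrt{t}$ such that $t[f(x^*+v/\sqrt{t})-f(x^*)] = \tfrac{1}{2} f''(\xi_{t,v})v^2$. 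Since $|v|/\sqrt{t}\leq A(t)\to 0$ on the range of integration, $\xi_{t,v}\to x^*$ uniformly in $v$, and the continuity of $f''$ yields $\delta(t):=\sup_{|v|\leq A(t)\sqrt{t}}|f''(\xi_{t,v})-f''(x^*)|\to 0$.

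The second step is pointwise convergence and domination. For each fixed $v\in(-C,C)$, eventually $|v|\leq A(t)\sqrt{t}$, and the integrand converges to $e^{\tfrac{1}{2}f''(x^*)v^2}h(x^*)$ by continuity of $h$ at $x^*$. For domination, pick $t_0$ large enough that $\delta(t)\leq |f''(x^*)|/2$ for $t\geq t_0$. Then for $|v|\leq A(t)\sqrt{t}$,
\[ \tfrac{1}{2}f''(\xi_{t,v})v^2 \leq \tfrac{1}{2}\bigl(f''(x^*)+\delta(t)\bigr)v^2 \leq \tfrac{1}{4}f''(x^*)v^2, \]
recalling $f''(x^*)<0$. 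Continuity of $h$ at $x^*$ also yields a neighborhood on which $|h|\leq M$, and for $t$ large this neighborhood contains $x^*+v/\sqrt{t}$ uniformly on the integration range. Hence the integrand, extended by $0$ outside $[-A(t)\sqrt{t},A(t)\sqrt{t}]$, is bounded by the integrable function $M e^{\tfrac{1}{4}f''(x^*)v^2}$. Dominated convergence then gives
\[ \int_{-A(t)\sqrt{t}}^{A(t)\sqrt{t}} e^{t[f(x^*+v/\sqrt{t})-f(x^*)]} h(x^*+v/\sqrt{t})\diff v \xrightarrow[t\to\infty]{} h(x^*) \int_{-C}^{C} e^{\tfrac{1}{2}f''(x^*)v^2}\diff v, \]
and rewriting $\tfrac{1}{2}f''(x^*)v^2 = -\tfrac{1}{2}(\sqrt{-f''(x^*)}v)^2$ yields \eqref{laplace_general}.

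The main subtlety is that the integration range depends on $t$ and, in the case $C=+\infty$, expands to all of $\mathbb{R}$. The classical Laplace method works on a fixed neighborhood of $x^*$ and bounds the tail contribution separately; here there is no "tail" to discard, so one must instead verify that the uniform vanishing of the Taylor remainder $\delta(t)$ beats the growth of $v^2$ on the expanding window. The bound $\delta(t)v^2\leq \tfrac{1}{2}|f''(x^*)|v^2$ for $t\geq t_0$, uniform in $v$ on the range, is precisely the estimate that absorbs the error into half of the decaying quadratic and makes the dominated convergence work.
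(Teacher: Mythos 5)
Your proof is correct and follows essentially the same strategy as the paper's: rescale by $v=\sqrt{t}(x-x^*)$, control the exponent using the continuity of $f''$ near $x^*$ (the paper does this with explicit $\pm\delta$ sandwich bounds, you do it with a Lagrange-remainder error term $\delta(t)$), and conclude by dominated convergence; both handle the $t$-dependent integration window the same way.
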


\begin{proof}
	The proof relies on approaching $f$ using Taylor's formula. Let $\delta>0$, then for $t$ large enough we have for  $x\in[x^*-A(t),x^*+A(t)]$, $$f(x^*) + \frac{1}{2}(f^{''}(x^*)-\delta)(x-x^*)^2 \leq f(x) \leq f(x^*) + \frac{1}{2}(f^{''}(x^*)+\delta)(x-x^*)^2 .$$ 
	Moreover, let $\epsilon>0$, the function $h$ being continuous at the point $x^*$, for $t$ large enough and $x\in[x^*-A(t),x^*+A(t)]$ we also have $$h(x^*)-\epsilon \leq h(x) \leq h(x^*)+\epsilon.$$ 
	Therefore, we get the following lower and upper bounds of the studied integral:
	\begin{equation*}
		\begin{split}
			(h(x^*)-\epsilon)&e^{tf(x^*)}\int_{x^*-A(t)}^{x^*+A(t)} e^{\frac{t}{2}(f^{''}(x^*)-\delta)(x-x^*)^2}\mathrm{d}x \leq \\ &\int_{x^*-A(t)}^{x^*+A(t)} e^{tf(x)}h(x)\mathrm{d}x \leq (h(x^*)+\epsilon)e^{tf(x^*)}\int_{x^*-A(t)}^{x^*+A(t)} e^{\frac{t}{2}(f^{''}(x^*)+\delta)(x-x^*)^2}\mathrm{d}x.
		\end{split}
	\end{equation*}
	Substituting $y=\sqrt{t}(x-x^*)$ in the right and left terms, we get
	\begin{equation*}
		\begin{split}
			\frac{(h(x^*)-\epsilon)e^{tf(x^*)}}{\sqrt{t}}&\int_{-A(t)\sqrt{t*}}^{A(t)\sqrt{t*}} e^{-\frac{1}{2}(y\sqrt{-f^{''}(x^*)+\delta})^2}\mathrm{d}y 
			\leq \\ \int_{x^*-A(t)}^{x^*+A(t)} &e^{tf(x)}h(x)\mathrm{d}x 
			\leq \frac{(h(x^*)+\epsilon)e^{tf(x^*)}}{\sqrt{t}}\int_{-A(t)\sqrt{t}}^{A(t)\sqrt{t}} e^{-\frac{1}{2}(y\sqrt{-f^{''}(x^*)-\delta})^2}\mathrm{d}y.
		\end{split}
	\end{equation*}
	Then, dividing by $\frac{h(x^*)}{\sqrt{t}}e^{tf(x^*)}\int_{-C}^{+C}e^{-\frac{1}{2}(\sqrt{-f^{''}(x^*)}v)^2}\mathrm{d}v$,
	\begin{equation*}
		\begin{split}
			\frac{h(x^*)-\epsilon}{h(x^*)}\frac{\int_{-A(t)\sqrt{t}}^{A(t)\sqrt{t}} e^{-\frac{1}{2}(y\sqrt{-f^{''}(x^*)+\delta})^2}\mathrm{d}y}{\int_{-C}^{+C}e^{-\frac{1}{2}(\sqrt{-f^{''}(x^*)}v)^2}\mathrm{d}v} 
			&\leq \frac{\int_{x^*-A(t)}^{x^*+A(t)} e^{tf(x)}h(x)\mathrm{d}x}{\frac{h(x^*)}{\sqrt{t}}e^{tf(x^*)}\int_{-C}^{+C}e^{-\frac{1}{2}(\sqrt{-f^{''}(x^*)}v)^2}\mathrm{d}v} \\
			&\leq \frac{h(x^*)+\epsilon}{h(x^*)}\frac{\int_{-A(t)\sqrt{t}}^{A(t)\sqrt{t}} e^{-\frac{1}{2}(y\sqrt{-f^{''}(x^*)+\delta})^2}\mathrm{d}y}{\int_{-C}^{+C}e^{-\frac{1}{2}(\sqrt{-f^{''}(x^*)}v)^2}\mathrm{d}v}.
		\end{split}
	\end{equation*}
	Now, by the definition of $A$, we have $\pm A(t)\sqrt{t} \underset{t\rightarrow\infty}{\longrightarrow} \pm C$, such that we can take $t\rightarrow\infty$ and dominated convergence theorem gives
	\begin{equation*}
		\begin{split}
			\frac{h(x^*)-\epsilon}{h(x^*)} \frac{\int_{-C}^{+C} e^{-\frac{1}{2}(y\sqrt{-f^{''}(x^*)+\delta})^2}\mathrm{d}y}{\int_{-C}^{+C}e^{-\frac{1}{2}(\sqrt{-f^{''}(x^*)}v)^2}\mathrm{d}v}
			&\leq \underset{t\rightarrow\infty}{\lim} \frac{\int_{x^*-A(t)}^{x^*+A(t)} e^{tf(x)}h(x)\mathrm{d}x}{\frac{h(x^*)}{\sqrt{t}}e^{tf(x^*)}\int_{-C}^{+C}e^{-\frac{1}{2}(\sqrt{-f^{''}(x^*)}v)^2}\mathrm{d}v} \\
			&\leq \frac{h(x^*)+\epsilon}{h(x^*)} \frac{\int_{-C}^{+C} 
			e^{-\frac{1}{2}(y\sqrt{-f^{''}(x^*)-\delta})^2}\mathrm{d}y}{\int_{-C}^{+C}e^{-\frac{1}{2}(\sqrt{-f^{''}(x^*)}v)^2}\mathrm{d}v}.
		\end{split}
	\end{equation*}
	Finally, when $\epsilon$ and $\delta$ go to $0$, one finds : 
	$$ 1 \leq \underset{t\rightarrow\infty}{\lim} \frac{\int_{x^*-A(t)}^{x^*+A(t)} e^{tf(x)}h(x)\mathrm{d}x}{\frac{\sqrt{2\pi}}{\sqrt{t}}e^{tf(x^*)}\frac{h(x^*)}{\sqrt{|f^{''}(x^*)|}}} \leq 1, $$  which gives the wanted approximation. 
\end{proof}

\begin{remarque}
	For $C=+\infty$, Proposition \ref{Une approximation de Laplace} gives $\eqref{laplace_classic}$.
\end{remarque}

\bibliographystyle{plain}
\bibliography{biblio}
\end{document}